\documentclass[12pt]{article}
\usepackage{a4}
\usepackage{amsthm}
\usepackage{amsfonts}
\usepackage{amssymb}
\usepackage{amsmath} 
\usepackage{array}
\usepackage[utf8]{inputenc}
\usepackage{graphicx}
\usepackage{cite}
\usepackage{color}
\usepackage{hyperref}
\usepackage{enumitem}
\usepackage{comment} 
\usepackage{algpseudocode}
\usepackage{algorithm}
\usepackage{setspace}
\usepackage{paths}

\title{Edge-partitioning 3-edge-connected graphs into paths~\thanks{Both authors were partially supported by ANR project Stint under reference ANR-13-BS02-0007 and by the LABEX MILYON (ANR-10-LABX-0070) of Université de Lyon, within the program Investissements d'Avenir (ANR-11-IDEX-0007) operated by the French National Research Agency (ANR). Klimošová was also supported by Center of Excellence – ITI, project P202/12/G061 of GA ČR and by Center for Foundations of Modern Computer Science (Charles Univ. project UNCE/SCI/004).}~\thanks{Extended abstract of this work was published as: T. Klimo\v{s}ov\'a, S. Thomass\'e: Decomposing graphs into paths and trees, Electron. Notes Discrete Math., 61 (2017) 751-757.}}
\author{Tereza Klimo\v{s}ov\'a\thanks{Department of Applied Mathematics, Faculty of Mathematics and Physics, Charles University,  
Malostransk\'e n\'am\v{e}st\'i 25, 118 00 Praha 1, Czech Republic. E-mail: {\tt tereza@kam.mff.cuni.cz}.} \and St\'ephan Thomass\'e\thanks{Laboratoire d’Informatique du Parall\'elisme,
\'Ecole Normale Sup\'erieure de Lyon,
69364 Lyon Cedex 07, France. E-mail: {\tt stephan.thomasse@ens-lyon.fr}.}~\thanks{Institut Universitaire de France}}

 %
\begin{document}
\maketitle
\begin{abstract}
We show that for every $\ell$, there exists $d_\ell$ such that every $3$-edge-connected graph with minimum degree $d_\ell$ can be edge-partitioned into paths of length $\ell$ (provided that its number of edges is divisible by $\ell$). This improves a result asserting that 24-edge-connectivity and high minimum degree provides such a partition. This is best possible as 3-edge-connectivity cannot be replaced by 2-edge connectivity. 
\end{abstract}

\section{Introduction}\label{sec:intro}
Given a graph $G$, we denote by $V(G)$ and by $E(G)$ its vertex set and its edge set, respectively. For $X\subseteq V(G)$, $G[X]$ denotes the induced subgraph of $G$ on $X$. Unless we specify otherwise, we consider graphs to be simple graphs without loops, and multigraphs to have multiple edges and loops.

For graphs $G$ and $H$, we say that $G$ is {\em $H$-decomposable} if there exists a partition $\{E_i\}_{i\in[k]}$ of $E(G)$ such that every $E_i$ forms an isomorphic copy of $H$. We then call $\{E_i\}_{i\in[k]}$ an {\em $H$-decomposition} of $G$. Note that if $G$ has an $H$-decomposition, $|E(H)|$ divides $|E(G)|$.

In~\cite{bib:barat-thomassen}, Bar\'at and Thomassen conjectured that for a fixed tree $T$, every sufficiently edge-connected graph with number of edges divisible by $|E(T)|$ has a $T$-decomposition. Recently, the conjecture was proved by Bensmail, Harutyunyan, Le, Merker and the second author~\cite{bib:BHLMT}.

\begin{theorem}\label{thm:barat-thomassen}
For any tree $T$, there exists an integer $k_T$ such that every $k_T$-edge-connected graph with number of edges divisible by $|E(T)|$ has a $T$-decomposition.
\end{theorem}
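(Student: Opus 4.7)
My plan is induction on $|E(T)|$. The base case of a single edge is essentially trivial once $G$ is sufficiently connected. For the inductive step, pick a leaf $v$ of $T$ with neighbor $u$ and set $T' := T - v$. The strategy is to partition $E(G) = E(H) \sqcup M$ such that (i) $H$ is still sufficiently edge-connected and $|E(H)|$ is divisible by $|E(T')|$, so the induction hypothesis yields a $T'$-decomposition of $H$, and (ii) $M$ is ``matching-like'' and can be injectively assigned to the copies of $T'$ in that decomposition, each $M$-edge serving as the pendant at the image of $u$, thereby extending the copy of $T'$ to a copy of $T$.

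To produce such a partition I would use the high edge-connectivity of $G$ to extract suitable edge-disjoint substructures, via tools such as Nash-Williams/Tutte spanning tree packing or Mader's edge-splitting theorem. The goal is to set aside enough ``matching edges'' $M$ so that at every vertex $x$ the number of $M$-edges incident to $x$ is at least the number of times $x$ is supposed to serve as an image of $u$ in the $T'$-decomposition, while the residual graph $H$ remains connected enough for the induction hypothesis (with a possibly much larger connectivity threshold $k_{T'}$) to apply.

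The main obstacle is the coupling between the $T'$-decomposition of $H$ and the allocation of $M$-edges: the induction hypothesis merely gives existence of \emph{some} $T'$-decomposition, with essentially no control over where the $u$-images land across $V(G)$. To bridge this gap, one must either (a) strengthen the inductive statement to a flexible form that allows prescribing the multiplicity of $u$-images at each vertex (subject to natural local parity/degree constraints), and prove flexibility by exchange arguments that locally modify a $T'$-decomposition to shift a $u$-image from one vertex to a nearby one, or (b) reduce first to a near-regular setting via splitting-off, in which a typical $T'$-decomposition distributes $u$-images uniformly enough for a Hall-type matching assignment to succeed on the reserved set $M$. Executing either route cleanly, while preserving the edge-connectivity hypothesis through every reduction step and handling the global divisibility condition $|E(T)| \mid |E(G)|$ consistently with the local constraints produced along the way, is where the genuine difficulty lies.
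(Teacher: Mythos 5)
This theorem is not proved in the paper you are reading: it is the main result of Bensmail, Harutyunyan, Le, Merker, and Thomass\'e, cited as~\cite{bib:BHLMT}, and is quoted here only as background. So there is no in-paper proof to compare against; I can only assess your sketch on its own terms.

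Your proposal is a plan, not a proof, and it stops exactly at the point where the real work begins --- you say so yourself. The leaf-removal induction (reserve a matching-like set $M$ for the pendant edge, decompose $G\setminus M$ into copies of $T'=T-v$ by the induction hypothesis, then glue an $M$-edge onto each $u$-image) is the first thing everyone tries, and the obstruction you name is precisely why it does not go through: the inductive hypothesis hands you \emph{some} $T'$-decomposition with no control over where the $u$-images fall, so there is no reason the number of $u$-images landing at a vertex $x$ should be dominated by the number of reserved $M$-edges at $x$, and a Hall-type assignment has nothing to grip. Your route (a), strengthening the inductive statement so that the multiplicity of $u$-images at each vertex can be prescribed (subject to local parity/divisibility constraints), is the right instinct --- this is what makes special cases such as paths and stars tractable --- but for a general tree that strengthened ``balanced decomposition'' statement is the entire theorem in disguise, and you do not prove it. Route (b) assumes a typical $T'$-decomposition distributes $u$-images near-uniformly, which is likewise unsubstantiated. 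For what it is worth, the published proof in~\cite{bib:BHLMT} does not follow this naive leaf-peeling scheme; it is built on the claw (nowhere-zero $3$-flow) case and a considerably more intricate reduction, and even the path case --- which the present paper cites as~\cite{bib:orig-paths} and then improves --- required its own substantial machinery. The gap you flag is genuine, and it is in fact the whole problem.
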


The starting point of the Bar\'at and Thomassen conjecture is the following weakening of the 3-flow conjecture of Tutte: every highly enough edge-connected graph has a nowhere zero 3-flow. This statement turned out to be equivalent to the Bar\'at-Thomassen conjecture restricted to the claw (star with three leaves), and was first proved for edge-connectivity 8 by Thomassen in~\cite{bib:thomassen-3-flow} and then improved by Lov\'asz, Thomassen, Wu and Zhang in~\cite{bib:LOVASZ2013587}. One of the important motivation of the Bar\'at-Thomassen conjecture was to provide tools which would decompose highly edge-connected graph into bounded size trees. Ideally, one would have expected a very general decomposition result giving rise to absorber techniques, as in the regularity lemma. However, the proof of the Bar\'at-Thomassen conjecture used as a starting point the claw case which is based on a very tight induction, leaving as open a more general sparse graph decomposition result.

A natural question which could lead to a better understanding of sparse graphs  is to ask how much edge-connectivity is important compared to minimum degree. In~\cite{bib:orig-paths}, the authors posed the following strengthened version of the conjecture of Bar\'at and Thomassen and they proved it for $T$ being a path.

\begin{conjecture}\label{conj:wrong}
There is a function $f$ such that, for any fixed tree $T$ with
maximum degree $\Delta_T$, every $f(\Delta_T)$-edge-connected graph with  minimum degree at least $f(|E(T)|)$ and number of edges divisible by $|E(T)|$ has a $T$-decomposition.
\end{conjecture}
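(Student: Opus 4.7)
The plan is to reduce the general tree case to the path case (the main theorem of this paper) via an inductive peeling of the branching structure of $T$. Write $T$ as a spine path $P$ (a longest path in $T$) together with subtrees $S_1,\dots,S_m$ of strictly smaller edge count, each rooted at an internal vertex of $P$. Choose $f$ so that $f(\Delta_T)$ dominates the edge-connectivity thresholds required in every routing step below (including the value $3$ needed by the path decomposition), and so that $f(|E(T)|)$ is large enough both to peel off copies of the $S_i$'s and to build an absorber. It then suffices to produce edge-disjoint copies of $S_1,\dots,S_m$ at well-chosen centres in $G$ so that the residual subgraph $G'$ still has minimum degree above the path-decomposition threshold, is $3$-edge-connected, and has $|E(P)| \mid |E(G')|$, so that the main theorem can finish the $T$-decomposition.

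Phase 1 (branch routing). Using the $f(|E(T)|)$-minimum degree hypothesis, greedily assign to centre vertices $v_1,\dots,v_N \in V(G)$ edge-disjoint rooted copies of each $S_i$ at $v_j$. To route one $S_i$, I would proceed as in the proof of the claw case of Bar\'at--Thomassen (Thomassen's $3$-flow approach and~\cite{bib:LOVASZ2013587}): iteratively grow a partial copy of $S_i$ by adding one edge at a time, using high minimum degree to find fresh leaves and using $f(\Delta_T)$-edge-connectivity together with Mader-style splitting to guarantee the remaining graph stays routable. Along the way one keeps a parity/divisibility ledger at each centre so that the residual degrees are compatible with the path phase that follows.

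Phase 2 (absorbing). Before Phase 1, reserve a small $T$-decomposable absorber subgraph $A\subseteq G$ built from many vertex-disjoint local gadgets, each of size polynomial in $|E(T)|$, with the property that $A$ together with any residue $R$ of at most, say, $|E(T)|^2$ edges admits a perfect $T$-decomposition. This parallels the construction in~\cite{bib:BHLMT} and only requires minimum degree in terms of $|E(T)|$. One then checks that $G \setminus A$ still satisfies the minimum-degree and edge-connectivity hypotheses of the main induction, performs Phase 1 on it, and finally applies Theorem~\ref{thm:barat-thomassen}-style path decomposition to the spine-only residual graph together with the absorber swallowing the residue.

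The step I expect to be the main obstacle is Phase 1 for trees combining large maximum degree with large diameter. A vertex of $T$ of degree $\Delta_T$ whose hanging subtrees $S_i$ are long forces us to route, out of a single centre of $G$, $\Delta_T$ edge-disjoint long excursions whose total length grows with $|E(T)|$. Edge-connectivity depending only on $\Delta_T$ — a quantity independent of $|E(T)|$ — may well be too weak to guarantee such routing uniformly across all centres, especially once many branches have already been consumed. If the plan does break down here, the same analysis should suggest either the right strengthening of the hypothesis (for instance replacing $\Delta_T$ by the maximum \emph{subtree-size} at a branching vertex) or an explicit family of graphs refuting the conjecture in its current form.
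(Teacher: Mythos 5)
This statement is a \emph{conjecture}, not a theorem: the paper offers no proof of it, and in fact explicitly asserts that it is \textbf{false}. Immediately after stating Conjecture~\ref{conj:wrong}, the authors write that in~\cite{bib:our-trees} they disproved it even for trees of maximum degree three. So no proof strategy can succeed, and the task here was to recognize that rather than to supply an argument.

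That said, your own diagnosis of where your plan breaks is essentially the right one, just not strong enough. You flag Phase~1 as problematic for trees ``combining large maximum degree with large diameter,'' but the true failure mode does not require large maximum degree at all: already for $\Delta_T=3$ one can build graphs whose edge-connectivity is any fixed constant $f(3)$ and whose minimum degree is arbitrarily large, yet which admit no $T$-decomposition. The underlying reason is the one you half-identify: the routing of long branches out of a bounded-order cut requires the number of edges crossing the cut to control a quantity that grows with $|E(T)|$, and connectivity depending only on the maximum degree cannot do this. Your proposed remedy --- replace $\Delta_T$ by a parameter that sees the global branching structure of $T$ --- is exactly the direction the paper takes: its Conjecture~\ref{conj:new} replaces the maximum degree by the number of leaves $m$ of $T$, and the paper's actual positive result (Theorem~\ref{thm:main}) is the case $m=2$, i.e.\ paths, with the optimal connectivity bound $3$. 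If you want a correct statement to prove along the lines of your two-phase plan, Theorem~\ref{thm:main} is the target; its proof in the paper proceeds not by peeling branches off a spine but by iterating small cuts, contracting the highly connected side, and encoding the contracted part as labelled ``wiggly'' edges and hyperedges whose lengths record divisibility information, which is a rather different mechanism from your absorber-plus-spine reduction.
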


We define the {\em length} of a path as its number of edges. The theorem for paths can be then stated as:

\begin{theorem}\label{thm:orig-paths}
For every integer $\ell$, there exists $d=d(\ell)$ such that the edge set of every $24$-edge-connected graph $G$ with minimum degree $d$ and number of edges divisible by $\ell$ has a decomposition into paths of length $\ell$.
\end{theorem}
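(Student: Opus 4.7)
The plan is a reserve-and-absorb strategy. Using the high minimum degree, I would first pre-reserve at every vertex $v$ a small pool $R_v$ of incident edges that together with short neighboring pieces form ``absorber gadgets.'' Then I would carry out a bulk decomposition of $G' := G - \bigcup_v R_v$ into paths of length $\ell$ using the edge-connectivity, and finally use the reserved absorbers together with the leftover fragments to build the remaining paths of length $\ell$. Choosing $|R_v|$ to be a constant depending only on $\ell$, and taking $d$ much larger, ensures $G'$ still has high minimum degree; and choosing $R_v$ to avoid all small edge-cuts (easy since 24-edge-connectivity leaves many edges across every cut) keeps $G'$, say, 20-edge-connected.

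For the bulk step, I would use the edge-connectivity of $G'$ to produce a balanced orientation in which every vertex has in-degree equal to out-degree up to parity, a weakening of Tutte's 3-flow conjecture known to hold at edge-connectivity 8 by Thomassen's work already cited in the excerpt. This yields (after adjoining a small $T$-join controlled by the reserved edges) an Eulerian-type decomposition of $G'$ into closed trails. I would then walk along each closed trail and cut it greedily into consecutive segments of length exactly $\ell$, producing the ``bulk'' partition. Each closed trail contributes at most one leftover segment of length strictly less than $\ell$, so the total residue is bounded by $\ell$ times the number of trails.

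The leftover segments, together with the reserved edges inside the gadgets $R_v$, must then be repackaged into further paths of length $\ell$. Since $\ell \mid |E(G)|$, the total number of leftover edges is divisible by $\ell$, so the counts match; for each leftover segment with endpoint $v$, the gadget at $v$ is used either to extend the segment up to length $\ell$ or to merge it with a neighboring fragment. The main obstacle, which I expect to be the heart of the argument, is ensuring that the pieces produced in this last step are \emph{simple paths of length exactly $\ell$}, not merely trails or walks: leftover segments can have arbitrary endpoints, and naive absorption could create repeated vertices or the wrong length. This is where $d \gg \ell$ is essential, providing enough local room to find a vertex-disjoint extension inside $R_v$ and to perform Menger-style reroutes when two leftover endpoints must be joined across the graph. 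Coordinating these local moves globally, while preserving the edge-partition property, is where the bulk of the technical work lies.
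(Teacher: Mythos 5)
There is a genuine gap at the bulk step, and it is the central difficulty of the theorem. Cutting a closed trail greedily into consecutive length-$\ell$ segments does not produce paths: a trail may revisit a vertex within a window of $\ell$ edges, so the segments are length-$\ell$ \emph{trails} rather than simple paths, and nothing in your argument rules out such revisits. You acknowledge the ``simple paths, not merely trails'' obstacle but locate it only in the absorption step; it already breaks the bulk decomposition. The machinery the source paper for Theorem~\ref{thm:orig-paths} actually uses (imported here in Section~\ref{sec:path-graphs}) never cuts long trails into $\ell$-segments. Instead it first decomposes the graph into a \emph{path-graph}---an edge-disjoint family of moderate-length paths with explicit quantitative bounds on the \emph{intersection ratio} and \emph{conflict ratio} at every vertex (Theorems~\ref{thm:cones} and~\ref{thm:low-conf-simple})---and then concatenates these paths into a single long complex path via a \emph{non-conflicting} Eulerian tour (Jackson's theorem, Theorem~\ref{thm:c-eulerian}). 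The low conflict ratio is precisely what guarantees that no vertex is revisited within $\ell$ edges of a concatenation point, so the long object finally cuts into simple paths of length exactly $\ell$. Your proposal has no analogue of this bookkeeping, which is where essentially all the work in the proof lies.

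There is also a secondary misstatement worth flagging. You attribute the existence of a balanced orientation (in-degree equal to out-degree up to parity) to Thomassen's 3-flow theorem at edge-connectivity~$8$. A balanced orientation exists in every multigraph with no connectivity hypothesis whatsoever (see Observation~\ref{obs:balor}: pair up the odd-degree vertices, follow Eulerian tours, orient accordingly). What Thomassen proved at edge-connectivity~$8$ is the existence of a nowhere-zero 3-flow, equivalently a modulo-$3$ orientation, which is a different and stronger statement and not what your Eulerian-decomposition step needs. As a consequence, your proposal never really uses the $24$-edge-connectivity. In the actual proof it enters via Nash--Williams type extraction of many \emph{degree-sparse} edge-disjoint spanning trees (Lemma~\ref{cor:sparse-tree}), which supply global routing to stitch the path-graph into a connected, nearly Eulerian object; the purely local reserved pools $R_v$, chosen to ``avoid small cuts,'' cannot by themselves coordinate the merging of leftover fragments whose endpoints are scattered arbitrarily across the graph.
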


Our main result, stated in Section~\ref{sec:paths}, is that $24$ in the statement of Theorem~\ref{thm:orig-paths} can be replaced by $3$. 

\begin{theorem}\label{thm:main}
For every integer $\ell$, there exists $d=d(\ell)$ such that the edge set of every $3$-edge-connected graph $G$ with minimum degree $d$ and number of edges divisible by $\ell$ has a decomposition into paths of length $\ell$.
\end{theorem}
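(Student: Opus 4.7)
The plan is to reduce Theorem~\ref{thm:main} to Theorem~\ref{thm:orig-paths} by pre-processing the edge-cuts of sizes $3$ through $23$ in $G$. Throughout I take $d\gg\ell$. The starting observation is a fatness lemma: if $(A,\overline A)$ is an edge-cut of size $c\le 23$ and $|A|=k$, then
\[ dk \;\le\; \sum_{v\in A}\deg(v) \;=\; 2\,e(G[A])+c \;\le\; k(k-1)+23, \]
so $k\ge d-O(1)$. Hence every ``small'' cut of $G$ has both sides of size $\Omega(d)$, leaving ample room for local surgery near such cuts.

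Next I would analyze the global structure of small cuts. The $3$-edge-cuts of $G$, and more generally the cuts of any bounded size, admit a tree-of-blocks description: one can find a partition $V(G)=V_1\cup\dots\cup V_t$ and a tree $T$ on $\{V_i\}$ such that every small cut of $G$ corresponds to an edge of $T$, and each block $G[V_i]$, once the cut-crossings are added as virtual edges, becomes $24$-edge-connected. This extends the cactus structure of minimum cuts by iterating over cuts of each size $3,4,\dots,23$ in turn.

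For every edge of $T$, representing a small cut of size $c\le 23$, I would construct a small collection of paths of length $\ell$ that cross the cut and use every cut edge exactly once, with the remaining edges distributed between the two sides so as to keep residual edge counts divisible by $\ell$. The fatness lemma guarantees room for such paths inside each $V_i$, and the $3$-edge-connectivity provides at least three independent crossings per cut, giving the degrees of freedom needed to \emph{simultaneously} tune all the boundary paths so that each block's residual edge count is $0$ modulo $\ell$. After deleting the boundary paths, each block has minimum degree at least $d-O(\ell)$, no internal cut of size $\le 23$, and edge count divisible by $\ell$, so Theorem~\ref{thm:orig-paths} applies block by block; combining its outputs with the boundary paths finishes the proof.

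The main obstacle is the joint tuning in the previous step: the tree $T$ may have many edges and the divisibility conditions on the blocks are coupled through the cuts. I would address this by processing cuts from the leaves of $T$ inward, at each stage using the local slack provided by the fatness lemma to absorb the modular adjustments before moving to the parent cut. The role of $3$-edge-connectivity is essential here: with only two crossings per cut the system of constraints becomes rigid and an analogous construction provably fails, matching the tightness claim in the abstract.
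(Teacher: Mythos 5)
Your high-level plan — peel off small cuts to reduce to highly connected pieces, then invoke Theorem~\ref{thm:orig-paths} — is in the same spirit as the paper's reduction, but two of its load-bearing claims do not hold as stated.

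First, the ``tree-of-blocks'' step is not justified and is in fact false in the generality you need. For minimum cuts in a $k$-edge-connected graph one has a cactus representation, and $3$-edge-cuts in a $3$-edge-connected graph can be uncrossed by submodularity, but once you allow all cut sizes $3,4,\dots,23$ the family of small cuts need not be laminar and there is no partition $V(G)=V_1\cup\dots\cup V_t$ and tree $T$ such that \emph{every} small cut of $G$ ``corresponds to an edge of $T$''. Two cuts of different sizes can cross with all four quadrants nonempty and neither of the uncrossed pair small, so the crossing cannot be eliminated. The paper sidesteps this entirely: Lemma~\ref{lem:cut}/\ref{lem:hycut} only extract \emph{one} bounded cut at a time whose inside is highly connected, and then the inside is contracted; the global structure of all small cuts is never needed. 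If you want to pursue a structural decomposition along your lines you would have to formulate it as a recursion (repeatedly split off one piece), at which point you are essentially reinventing the paper's reducing procedure and inherit all of its bookkeeping difficulties.

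Second, even granting some block structure, the claim that ``after deleting the boundary paths, each block has \dots\ no internal cut of size $\le 23$'' does not follow. The boundary paths remove up to $23\ell$ edges from a block and can concentrate on what was previously, say, a $30$-edge cut of the block, leaving a cut of size below $24$ and invalidating the application of Theorem~\ref{thm:orig-paths}. Preventing this requires choosing the boundary paths inside a structure that hits every vertex only an $\varepsilon$-fraction of its degree (this is exactly the role of Lemma~\ref{cor:sparse-tree} and the rainbow/fraction machinery of Section~\ref{sec:path-graphs}); it is not automatic. A related, quieter issue is the joint tuning across cuts: an internal block of $T$ meets several cuts, and the boundary paths chosen for different incident cuts must be pairwise edge-disjoint and must not conspire to ruin connectivity or the modular count. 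Your ``process leaves inward'' heuristic does not resolve this coupling; the paper's indices on wiggly edges/stubs and the conflict-ratio bounds are precisely the device that makes the different absorber realizations compatible. Finally, the crucial technical obstruction flagged in the introduction — that after contracting a $3$-edge-cut you are left with a degree-$3$ vertex on which Mader splitting fails, forcing the move to $3$-uniform hyperedges — never arises in your scheme only because you consume the cut edges entirely, but that choice is what creates the connectivity loss above; you cannot have both.
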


The dependency on connectivity is optimal. In~\cite{bib:orig-paths}, it was shown that there exists $2$-edge-connected graphs with arbitrarily high minimum degree and number of edges divisible by $9$ which do not have a decomposition into paths of length $9$. 

However, Conjecture~\ref{conj:wrong} is not true in general. In ~\cite{bib:our-trees}, we showed that it does not hold even for trees of maximum degree three. 

Whereas in our result we made effort to prove the optimal connectivity bound for decomposition into paths, we believe that there should be a substantially simpler way of proving non-optimal connectivity bounds. To motivate further research in this area, we pose the following alternative to Conjecture~\ref{conj:wrong}, which bounds connectivity in terms of the number of leaves of the tree.

\begin{conjecture}\label{conj:new}
There is a function $f$ such that, for any fixed tree $T$ with $m$ leaves, every $f(m)$-edge-connected graph with minimum degree at least $f(|E(T)|)$ and number of edges divisible by $|E(T)|$ has a $T$-decomposition.
\end{conjecture}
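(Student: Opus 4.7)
The plan is to reduce the $T$-decomposition problem to the path-decomposition result (Theorem~\ref{thm:main}) by exploiting the structural decomposition of $T$. Let $T^{*}$ denote the \emph{topological core} of $T$, obtained by suppressing every degree-2 vertex; then $T^{*}$ is a tree with the same $m$ leaves as $T$, at most $2m-2$ vertices, and at most $2m-3$ edges. Each edge $e$ of $T^{*}$ corresponds to a maximal internal path of $T$ of some length $\ell_e \geq 1$, with $\sum_e \ell_e = |E(T)|$. A copy of $T$ in $G$ is therefore an embedding of $T^{*}$ in $G$ together with edge-disjoint paths of the prescribed lengths $\ell_e$ realizing each skeleton edge. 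This structural observation is the key: the branching pattern of $T$ is governed by a tree whose size depends only on $m$, whereas the path lengths $\ell_e$ contain all the dependence on $|E(T)|$. Separating these two aspects is the motivation for having two distinct parameters in the statement.

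The strategy is to produce a decomposition of $G$ in two phases. In the first phase, I would use $f(m)$-edge-connectivity to set aside, at each vertex of $G$ designated to host a branching vertex of some $T^{*}$-copy, a small "branching capacity" consisting of the correct multiset of half-edges. Since $T^{*}$ has only $O(m)$ structural edges, the associated parity/flow constraints depend only on $m$, so one can hope to invoke orientation tools in the spirit of the weak 3-flow theorem proved in \cite{bib:LOVASZ2013587} and the frameworks developed in \cite{bib:BHLMT} and \cite{bib:orig-paths} to distribute such capacities throughout $G$. In the second phase, with minimum degree at least $f(|E(T)|)$, each branching vertex has ample room, and the leftover edges inherit a decomposition into paths of the lengths $\ell_e$, anchored to specified endpoints, delivered by a strengthening of Theorem~\ref{thm:main}. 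A small absorber reserved at the outset, of the type used in \cite{bib:orig-paths}, corrects any divisibility or endpoint leftover.

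The main obstacle is the second phase: Theorem~\ref{thm:main} decomposes a graph into paths of a single length with free endpoints, whereas here one needs \emph{heterogeneous} prescribed lengths $\ell_{e_1}, \ldots, \ell_{e_{2m-3}}$ with \emph{prescribed endpoints} clustered near each skeleton vertex. Lifting the absorber/induction machinery underlying Theorem~\ref{thm:main} to this pointed multi-length setting without inflating the connectivity requirement beyond $f(m)$ is the central difficulty, because the 3-edge-connectivity bound in Theorem~\ref{thm:main} was shown to be tight for the free-endpoint single-length case. A prudent intermediate target would be to establish Conjecture~\ref{conj:new} first for spiders, i.e.\ trees with a single branching vertex, where the routing constraints involve a single center and a single family of paths; this isolates the skeleton-placement problem from the multi-length routing problem, and success there should suggest whether one can bootstrap to general $m$ by inducting on the number of branching vertices of $T^{*}$.
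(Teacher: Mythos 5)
The statement you are addressing is Conjecture~\ref{conj:new}, which the paper explicitly poses as an \emph{open problem}; it offers no proof, only the observations that Theorem~\ref{thm:main} yields $f(2)=3$ and that already the existence of $f(3)$ would contain the claw case (i.e.\ the weak 3-flow theorem of~\cite{bib:thomassen-3-flow,bib:LOVASZ2013587}). Your submission is likewise not a proof: it is a reduction of the problem to a harder-looking one plus an acknowledged research program. The structural observation about the topological core $T^{*}$ (at most $2m-2$ vertices, $2m-3$ edges, path lengths $\ell_e$ summing to $|E(T)|$) is correct and is a reasonable way to explain \emph{why} the conjecture separates the two parameters, but it does not advance the argument.

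The concrete gap is that both of your phases are unestablished, and the second one is essentially the conjecture restated. Decomposing the leftover graph into edge-disjoint paths of heterogeneous prescribed lengths with prescribed endpoints, under edge-connectivity depending only on $m$, is strictly harder than Theorem~\ref{thm:main}; nothing in the paper's machinery (complex hypergraphs, wiggly edges, the absorber iteration along small cuts) is set up to handle prescribed endpoints, and the paper's companion result~\cite{bib:our-trees} that Conjecture~\ref{conj:wrong} fails even for trees of maximum degree three is a warning that ``the branching structure is small, so the connectivity requirement should be small'' is not a safe heuristic. The first phase (``set aside branching capacity via orientation tools'') is a hope, not an argument: for $m=3$ the simplest instance of that phase already amounts to the claw decomposition theorem, which required the full strength of~\cite{bib:LOVASZ2013587}. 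Your proposed intermediate target (spiders) is sensible as a research direction --- and is in the spirit of the authors' own remark about $f(3)$ --- but as it stands you have not proved the statement, and neither does the paper claim to.
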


Observe that our result implies that we can set $f(2)=3$ and that the existence of $f(3)$ contains the claw case. It would be very interesting to find a proof which would not use the claw case as a starting point.

The core idea of the proof of Theorem~\ref{thm:main} is to iterate partitions $(A,B)$ of the input graph $G$ along edge-cuts of bounded size in such a way that $A$ is highly connected. This can be easily done by the following result.

\begin{lemma}\label{lem:cut}
Let $k$ be an integer and $G$ be a multigraph. Then, there is a cut $(A,B)$ in $G$ of order at most $2k$ such that $G[A]$ is $k$-edge-connected or has only one vertex.
\end{lemma}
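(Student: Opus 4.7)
The plan is the standard extremal approach: let $A$ be a nonempty subset of $V(G)$ of minimum cardinality among those satisfying $e_G(A,V(G)\setminus A)\le 2k$, where $e_G(X,Y)$ denotes the number of edges of $G$ between $X$ and $Y$. Such a set exists since $A=V(G)$ trivially qualifies (its cut is empty). If $|A|=1$ we are done, so from now on we assume $|A|\ge 2$ and aim to show that $G[A]$ is $k$-edge-connected.

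Suppose for contradiction that $G[A]$ is not $k$-edge-connected: then there exists a partition of $A$ into two nonempty parts $A_1,A_2$ with $e_G(A_1,A_2)\le k-1$. The key observation is that $e_G(A_1,V(G)\setminus A)+e_G(A_2,V(G)\setminus A)=e_G(A,V(G)\setminus A)\le 2k$, so at least one of the two summands, say the one for $A_1$, is at most $k$. Writing $e_G(A_1,V(G)\setminus A_1)=e_G(A_1,A_2)+e_G(A_1,V(G)\setminus A)$, we get a cut of $A_1$ of size at most $(k-1)+k<2k$, while $|A_1|<|A|$, contradicting the minimal choice of $A$. Hence $G[A]$ is $k$-edge-connected and $(A,V(G)\setminus A)$ is the desired cut.

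The argument is short and contains no real obstacle; its entire content is the choice of the right extremal object. The only point that deserves attention is the choice of the inequality $\le 2k$ (rather than, say, $<2k$) in the minimization, so that the strict inequality $(k-1)+k<2k$ coming out of the split yields a strictly smaller feasible set. I would take edge-connectivity of multigraphs in the minimum-cut formulation (so that loops, which contribute to no cut, are automatically irrelevant), and make sure to allow $|A|=1$ in the minimization so that the boundary case of the conclusion is produced naturally. Everything else is routine.
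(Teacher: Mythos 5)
Your proof is correct. The paper states Lemma~\ref{lem:cut} without proof, but proves the hypergraph generalization (Lemma~\ref{lem:hycut}) by exactly the same extremal argument: take a minimal side $A$ with small boundary and show any internal cut of $G[A]$ must be large by pushing the boundary bound down to a proper subpart. Your version, minimizing cardinality and selecting the part $A_i$ with $e_G(A_i,V\setminus A)\le k$, is a streamlined instance of that argument for ordinary graphs (the paper's hypergraph proof adds the two inequalities instead of picking one side, to handle the asymmetric cut-value); the two are essentially the same.
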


The trick is to observe that the connectivity of $A$ gives the opportunity to extend any (recursively found) decomposition of $B$, i.e. $A$ plays the role of an absorber. The first difficulty is to be able to keep the information of the edges of the cut $(A,B)$. This is achieved by contracting $A$ to a single vertex $a$ and using Mader's splitting theorem to "push" the edges incident to $a$ into $B$, creating new edges in $B$ called "wiggly". However, this tools works very well on even degree vertices but does not apply on, say, degree 3 vertices created by contraction of a 3-edge cut. This is the main problem of this technique (and maybe the main interest of the paper). To overcome this, we allow hyperedges of size 3 which are created by splitting degree 3 vertices $a$. Fortunately the notion of partition-connectivity of hypergraphs fits very well to our goal. Moreover, Lemma~\ref{lem:cut} admits a straightforward hypergraph generalization.

The second difficulty (and the paramount one for trees) is that we need to remember in $B$ the number of edges inside $A$ modulo $\ell$. This can easily be done by assigning some length to the created wiggly edges, hence a wiggly edge $e$ can now be interpreted as a potential path of a given length which can be realized in $A$. We then index the two endvertices of $e$ by the label $A$, and keep then the information of the origin of $e$.

The third difficulty is that if several wiggly edges indexed by $A$ are used in a path-decomposition of $B$, their realization in $A$ may intersect. To overcome this, we need to constrain the use of same-index edges in our path decomposition.

To sum-up, we are dealing here with {\em complex hypergraphs}, which can have half-edges (called stubs), loops, edges, and hyperedges of size three. Each edge has a label corresponding to its length and every incidence vertex/edge has an index. Somewhat unexpectedly this definition is robust enough to allow induction, but the price to pay is that even the definition of path in a complex hypergraph is not straightforward. 

Ideally, the presentation of the proof would start by the general picture, and then introduce the tools and key definitions. However, the general idea is so intricately linked to the notion of complex hypergraph that we chose to start with the basics and postpone the final proof to the end of the paper.

We define complex hypergraphs in Section~\ref{sec:complex}.
Before constructing path-decompositions, we need to "shrink" the hyperedges of size three into edges without decreasing the degree of any vertex too much. The existence of such a shrinking is shown by an entropy compression argument in Section~\ref{sec:shrink}. We then construct partial path-decompositions using the notion of path-graphs developed in~\cite{bib:orig-paths}. We introduce this notion in Section~\ref{sec:path-graphs} and we prove the key lemma regarding the existence of a decomposition in Section~\ref{sec:core}. Finally, in Section~\ref{sec:paths}, we prove Theorem~\ref{thm:main}, showing how to iteratively construct a path-decomposition of the whole graph by finding a path-decomposition of the last (complex) graph in the sequence and then adding the remaining parts one by one, repeatedly routing the paths across the cut to the next part and extending the decomposition to the next part.

Throughout Sections~\ref{sec:complex},~\ref{sec:tools},~\ref{sec:shrink},~\ref{sec:path-graphs}, and~\ref{sec:core}, where we introduce terminology and tools needed for proving Theorem~\ref{thm:main}, we assume that $\ell$ is a fixed integer greater than one.







\section{Complex hypergraphs}\label{sec:complex}

A {\em hypergraph} $H$ is a pair $(V,E)$ where $V$ is a set of vertices and $E$ is a multiset of {\em hyperedges} where each hyperedge is a nonempty multiset of elements of $V$. For instance, $[u,v,v]$ and $[v,v,v]$, where $u,v\in V$ are hyperedges of size three. As in the case of graphs, $H[X]$ denotes the {\em induced hypergraph} of $H$ on the vertex set $X\subseteq V$ and its hyperedges are those hyperedges of $H$ which contain only vertices in $X$. 
A {\em path} in a hypergraph $H$ is an alternating sequence, without repetition, of vertices
and hyperedges, $v_1, e_1, v_2, e_2,\ldots,e_k, v_{k+1}$ such that $v_i,v_{i+1}\in e_i$ for every $i\in [k]$.
To {\em shrink a hyperedge $e$} means to replace it by $e'\subsetneq e$.
A hypergraph is {\em connected} if there is a path between any two vertices.
A hypergraph is {\em $k$-edge-connected} if after removing any $k-1$ hyperedges the hypergraph is still connected. 

Now we define {\em complex graphs and hypergraphs} which are the central objects of this proof. Since we will repeatedly perform some cuts, some edges will be divided into two half edges, called here stubs. To keep track of these, and remember the information of their creation, we need higher order relations, like wiggly edges and hyperedges of size $3$ which, in a sense, represent a union of two or three stubs. They allow more control over divisibility and connectivity of the hypergraph we construct.

We define a {\em complex hypergraph $G$} to be a tuple $(V,E_o,E_w,H,S)$, where 
\begin{itemize}
\item $(V,E_o)$ is a simple graph, 
\item $E_w$ is a multiset of {\em wiggly edges} $e=([(v_1,i_1^e),(v_2,i_2^e)],\alpha)$, where $v_1,v_2\in V$ (not necessarily distinct), $i_1^e,i_2^e\in \ZZ$ and  $\alpha\in \NNo$, 
\item $H$ is a multiset of hyperedges $h=([(v_1,i_1^h),(v_2,i_2^h),(v_3,i_3^h)],\alpha)$, where $v_1,v_2,v_3\in V$ (not necessarily distinct), $i_1^h,i_2^h,i_3^h\in \ZZ$ and $\alpha\in \NNo$,  
\item $S$ is a multiset of {\em stubs} $s=([(v_1, i_1^s)], \alpha)$, where $v\in V$, $i_1^s\in \ZZ$ and $\alpha\in \NNo$. 
\end{itemize}

In other words, hyperedges, wiggly edges and stubs are ordered pairs, such that the first entry is a multiset $V_e$ of pairs $(v,i)$ where $v$ is a vertex and $i\in \ZZ$, of size $3,2$ and $1$, respectively, and the second entry is a number $\alpha\in \NNo$. We say that a vertex $v$ and a hyperedge, wiggly edge or stub $e$ are {\em incident} if $(v,i)\in V_e$ for some $i\in \ZZ$. We then call $i$ the {\em index} of $e$ at $v$. Note that if $e$ is a wiggly edge or a hyperedge, it might have several different indices at $v$.
For $v\in V$ and $i\in \mathbb{N}$, we define $\iota_G(v,i)$ to be the number of occurrences of the pair $(v,i)$ in stubs, wiggly edges and hyperedges incident with $v$ in $G$, $\iota(G)$ is then defined as the maximal $\iota_G(v,i)$, that is $\iota(G)=\max_{v\in V, i\in \mathbb{N}} \iota_G(v,i)$. We emphasize that we do not consider index $-1$ when computing $\iota_G(v,i)$ and $\iota(G)$.

We call $\alpha=\alpha(e)$ the {\em length} of a wiggly edge, hyperedge or stub $e$, respectively. For convenience, we say that ordinary edges have length one, writing $\alpha(e)=1$ for an ordinary edge $e$. 
At some occasions, we do not specify index or length since they are irrelevant for our purposes. The sum of the lengths of all the stubs, edges and hyperedges in a complex hypergraph $G$ is the {\em length of $G$} and is denoted by $\alpha(G)$. A complex hypergraph is {\em $\ell$-divisible} if its length is divisible by $\ell$.

If $H=\emptyset$, we say that $G$ is a {\em complex graph}.
We refer to the set $E_o$ as {\em ordinary edges} and to the set $E_o\cup E_w$ as {\em edges}. We denote the set of edges of a complex hypergraphs $G$ by $E(G)$. We emphasize that, unlike in the case of "usual" hypergraphs, in the context of complex hypergraphs, we reserve the term {\em hyperedge} only for elements of $H$. 

The degree $\deg_G v$ of a vertex $v$ in a complex hypergraph $G$ is the number of {\em occurrences of $v$} in stubs, edges and hyperedges. That is, the number of times a pair $(v,i)$ , $i\in \NNo$, appears in stubs, wiggly edges and hyperedges incident with $v$ plus the number of times $v$ appears in ordinary edges. E.g., in a hyperedge $h=([(u,i_1),(v,i_2),(v,i_3)],\alpha)$, $u$ occurs once and $v$ occurs twice, therefore, $h$ contributes one to the degree of $u$ and two to the degree of $v$.
The {\em unit degree} $\udeg_G v$ of a vertex $v$ is the number of ordinary edges plus the number of occurrences of $v$ in wiggly edges of length $1$. The {\em stub-degree} $\sdeg_G v$ of a vertex $v$ is the number of stubs incident with $v$, the {\em edge-degree} $\edeg_G v$ is the number of ordinary edges incident with $v$ plus the number of occurrences of $v$ in wiggly edges and the {\em hyper-degree} $\hdeg_G v$ is the number occurrences of $v$ in hyperedges. A {\em loop} is a wiggly edge of the form $([(v,i),(v,j)],\alpha)$, otherwise we call an edge a {\em non-loop}, in particular, all ordinary edges are non-loops. We define  the {\em loop degree $\ldeg_G v$ of $v$} to be twice the number of loops incident with $v$. We omit the subscript if the relevant complex hypergraph is clear from the context.

Let $\varepsilon>0$. We call a complex graph $G$ {\em $\varepsilon$-free} if $\iota(G)\leq \varepsilon \edeg_G v$ and $\edeg_G v>0$ for every vertex $v$.

A {\em subgraph} $G'$ of a complex hypergraph $G=(V,E_o,E_w,H,S)$ is a complex hypergraph $(V',E_o',E_w',H',S')$, such that $V'\subseteq V$, $E_o'\subseteq E_o$, $E_w'\subseteq E_w$, $H'\subseteq H$, $S'\subseteq S$ and stubs, edges and hyperedges of $G'$ are incident only with vertices in $V'$. We say that $G'$ is {\em spanning} if $V'=V$. We call two subgraphs {\em edge-disjoint} if their sets of edges, hyperedges and stubs are disjoint.

The {\em underlying hypergraph} of a complex hypergraph $G$ is the hypergraph obtained from $G$ by discarding the information about lengths and indices. Most notions for hypergraphs naturally extend to complex hypergraphs through underlying hypergraphs. Except for the two cases mentioned below, we use standard graph terminology for complex hypergraphs, looking at their corresponding underlying hypergraphs. For instance, we say that a complex hypergraph is {\em $k$-edge-connected} if its underlying hypergraph is $k$-edge-connected. Complex (hyper)graph terminology deviates from the usual terminology only on two occasions, the first one is the following definition of a complex path and the second one is the notion of shrinking that is introduced later.

A {\em complex path} is a sequence of vertices, wiggly and ordinary edges and stubs such that
\begin{enumerate}[label=(\roman*)]
\item the first and the last element of the sequence are either vertices or stubs,
\item a stub can appear only at the beginning or at the end of the sequence and is immediately followed or preceded by its incident vertex,
\item every edge is immediately followed and preceded by the vertices incident with it (distinct unless it is a loop),
\item vertices and edges alternate,
\item no stub or edge appears in the sequence more than once,
\item \label{cnd:6}there is a wiggly edge or more than $\ell$ ordinary edges between every two occurrences of the same vertex in the sequence, and 
\item two consecutive indices on the path are different unless they belong to the same wiggly edge or they are $-1$. That is, if we write indices of stubs and wiggly edges in the order given by the order in which stubs and edges (and their endpoints) appear in the complex path, two consecutive indices belonging to different wiggly edges or stubs must be different or $-1$. Note that there might be ordinary edges (which do not have indices) on the path between the wiggly edges or the stubs corresponding to consecutive indices.
\item We assume that the order of indices of a loop in a path is fixed. See Figure~\ref{fig:loop} for an example.
\end{enumerate}

\begin{figure} 
\begin{center}
\includegraphics[scale=1]{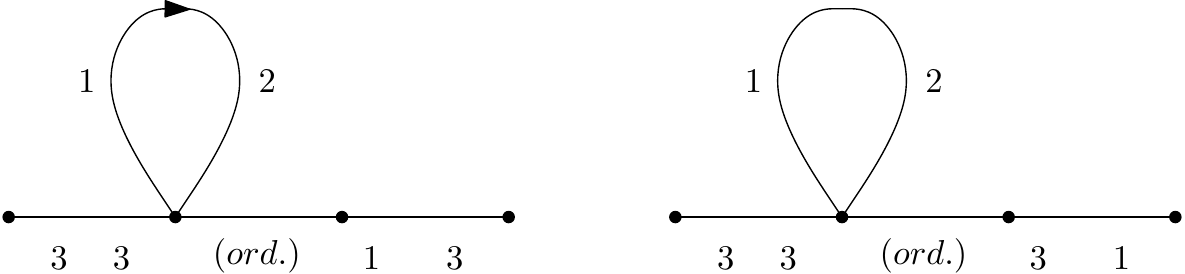}
\caption{Examples of complex paths with three wiggly edges and one ordinary edge. In the left example, only one ordering of the indices of the loop yields a complex path. The other ordering would lead to two consecutive indices $1$ not belonging to the same wiggly edge. In the right example, both orderings yield a complex path. We fix one of them as the "default" one.}
\label{fig:loop}
\end{center}
\end{figure}

We omit the word complex when there is no risk of confusion.

We call the first and the last element of the sequence the {\em ends} of the path. (These can be either stubs or vertices.)

Usually, we disregard the orientation of the path, i.e., the path $x_1,\ldots, x_n$ is considered to be the same as $x_n,\ldots, x_1$. Where  the orientation matters, we specify it by saying which element of the path is the first or the last one. If the first and the last element of the path $P$ is the same vertex (note that it cannot be a stub), we call the path a {\em loop-path} and we choose one orientation of the path to be the "default" one. After specifying an orientation of $P$, we refer to its other, reverse orientation as $P^r$.

For a fixed orientation of a path we define {\em the initial index and the terminal index} of the path as follows. If the path does not contain any stub or wiggly edge, we define both indices to be $-1$. Otherwise, we define the indices as the first and the last index in the sequence of indices appearing in the path, respectively. We sometimes refer to the initial and terminal index as the {\em index of the path at $v$}, where $v$ is the corresponding end of the path.
 
We define the {\em length} of a complex path $P$ as the sum of the lengths of the stubs and the edges appearing in $P$. We denote the length of a path $P$ by $\alpha(P)$. We call a complex path a {\em $k$-path} if its length is congruent to $k$ modulo $\ell$. We say that a complex path is {\em long}, if it contains at least one wiggly edge or at least $\ell$ ordinary edges.

For an edge $e$ with endpoints $u$, $v$ and two complex paths $P$ and $Q$ such that $u$ is the last element of $P$ and $v$ the first element of $Q$, $P \ct e\ct Q$ is formed by concatenating $P$, $e$ and $Q$. Similarly, for a stub $s$ incident with $v$ and a path $P$ such that $v$ is its first or its last element, we define $s\ct P$ or $P\ct s$, respectively, as the concatenation of $P$ and $s$. Abusing the notation, if $P$ and $Q$ are two paths with $v$ as the last and first vertex, respectively, we define $P\ct Q$ as the concatenation of $P$ and $Q$, without repetition of $v$. 
Note that the result of concatenation might not be a complex path, in particular, it might not satisfy the last three requirements in the definition of a complex path.

We say that a complex graph has an {\em $\ell$-path-decomposition}, if its edge set and stub set can be decomposed into disjoint $\ell$-paths.
Let $G=(V,E)$ be a simple graph treated as the complex graph $(V,E,\emptyset,\emptyset,\emptyset)$.
Observe that a complex $\ell$-path in $G$ might not be a path in the usual sense because of vertex repetitions. However, it can be decomposed into paths of length $\ell$ in the usual sense. It follows that if $G$ has an $\ell$-path-decomposition in the complex sense, $G$ has a decomposition into paths of length $\ell$ in the usual sense, because of condition~\ref{cnd:6}.


The {\em shrinking of a wiggly edge} $e$ with endpoints $v_1,v_2$ is the process of replacing $e$ by two stubs $s_1,s_2$, incident with $v_1$ and $v_2$, respectively, such that $\alpha(s_1)+\alpha(s_2)=\alpha(e)\mod \ell$ and $i^{s_1}_1=i^e_1$, $i^{s_2}_1=i^e_2$.
The {\em shrinking of a hyperedge} $h$ containing vertices $v_1,v_2$ and $v_3$ is the process of replacing $h$ either by a wiggly edge containing two of the vertices of $h$ and a stub incident with the third one such that the sum of their lengths is equal to  $\alpha(h)$, or by three stubs incident with the vertices of $h$ such that the sum of their lengths is equal to $\alpha(h)$, with indices of the wiggly edge or the stubs at $v_1,v_2,v_3$ equal to the corresponding indices of $h$.

A {\em shrinking} of a complex hypergraph $G$ is a complex hypergraph $G'$ obtained from $G$ by shrinking a  (possibly empty) set of wiggly edges and hyperedges. Note that shrinking does not change the degree of the vertices, $\alpha(G)=\alpha(G')$ and $\iota(G)=\iota(G')$. 
Moreover, shrinking is transitive, i.e., if $G'$ is a shrinking of $G$ and $G''$ is a shrinking of $G'$, $G''$ is a shrinking of $G$. 
A shrinking is {\em total} if it is a complex graph (i.e., all the hyperedges were shrinked). We say that a shrinking of a complex hypergraph is {\em absolute} if it does not contain any wiggly edges or hyperedges, i.e., it cannot be shrinked further.

We show that wiggly edges in a complex graph can be shrinked in a way which preserves $\ell$-path-decomposability of the complex graph.

\begin{observation}\label{obs:abs-shrink}
Let $G$ be a complex graph that has an $\ell$-path-decomposition and let $e$ be a wiggly edge of $G$. Then, $e$ can be shrinked in such a way that the resulting graph has an $\ell$-path-decomposition. Consequently, there exists an absolute shrinking of $G$ which has an $\ell$-path-decomposition.
\end{observation}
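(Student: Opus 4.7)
The plan is to locate the wiggly edge $e$ inside the given $\ell$-path-decomposition and split the unique path using $e$ into two shorter sequences that use the two newly created stubs in place of $e$. Since every edge of $G$ appears in exactly one member of an $\ell$-path-decomposition, $e$ lies on a unique $\ell$-path $P$, which I orient so that $P = P_1 \cdot e \cdot P_2$ with $P_1$ ending at $v_1$ and $P_2$ beginning at $v_2$, where $v_1, v_2$ are the endpoints of $e$ (possibly equal, if $e$ is a loop, in which case the fixed default orientation of the loop determines which end is $v_1$ and which is $v_2$). Writing $a = \alpha(P_1)$ and $b = \alpha(P_2)$, I have $a + \alpha(e) + b \equiv 0 \pmod{\ell}$.

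To shrink $e$, I pick any non-negative integers $\alpha(s_1) \equiv -a \pmod{\ell}$ and $\alpha(s_2) \equiv -b \pmod{\ell}$ and give the stubs indices $i_1^e$ and $i_2^e$ at $v_1$ and $v_2$, as required by the definition of shrinking. Then $\alpha(s_1) + \alpha(s_2) \equiv -(a + b) \equiv \alpha(e) \pmod{\ell}$, so this is a legitimate shrinking; call the resulting complex graph $G'$. By construction, $\alpha(P_1 \cdot s_1) \equiv 0 \pmod{\ell}$ and $\alpha(s_2 \cdot P_2) \equiv 0 \pmod{\ell}$, and replacing $P$ in the original decomposition by these two sequences keeps the partition of edges and stubs of $G'$ intact, since the two new stubs exactly account for the removed wiggly edge.

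The only real work is checking that $P_1 \cdot s_1$ and $s_2 \cdot P_2$ genuinely satisfy the conditions of a complex path, and this is where I expect all the bookkeeping to live. Conditions (i)-(v) follow immediately from the fact that $P$ satisfied them together with the observation that the new stubs sit at the ends of the new sequences. For condition (vi), any two repeated occurrences of a vertex inside $P_1 \cdot s_1$ already occurred inside $P_1$, so the separator witnessing the condition in $P$ continues to witness it; repetitions that straddled $e$ in $P$ now lie in different pieces. For the index condition (vii), the sequence of indices read along $P_1 \cdot s_1$ is exactly the prefix of the index sequence of $P$ up to and including $i_1^e$, now carried by $s_1$ instead of by $e$, so every consecutive pair of indices is identical to one appearing in $P$; by symmetry, the same argument works for $s_2 \cdot P_2$, and the loop case is handled the same way thanks to the fixed default orientation.

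For the \emph{consequently} part I simply iterate: applying what has just been proved to any remaining wiggly edge produces a complex graph with an $\ell$-path-decomposition and with strictly fewer wiggly edges. Since $G$ has no hyperedges, after finitely many iterations no wiggly edge remains, and the resulting complex graph is an absolute shrinking of $G$ endowed with an $\ell$-path-decomposition, as required.
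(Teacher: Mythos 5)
Your proof is correct and takes essentially the same approach as the paper: locate the unique path $P = P_1 \ct e \ct P_2$ containing $e$, choose stub lengths $\alpha(s_1) \equiv -\alpha(P_1)$ and $\alpha(s_2) \equiv -\alpha(P_2) \pmod{\ell}$, replace $P$ by the two pieces, and iterate. The only difference is that you spell out the routine verification that $P_1 \ct s_1$ and $s_2 \ct P_2$ satisfy all the conditions in the definition of a complex path, which the paper leaves implicit.
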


\begin{proof}
 Let $v_1$ and $v_2$ be the endpoints of $e$. Let $\PP$ be an $\ell$-path-decomposition of $G$ and let $P$ be the $\ell$-path in $\PP$ that contains $e$. Thus, $P=P_1 \ct e \ct P_2$ for some complex paths $P_1$ and $P_2$ with an end $v_1$ and $v_2$, respectively. Consider the complex graph $G'$ obtained by shrinking $e$ into stubs $s_1$ and $s_2$ incident with $v_1$ and $v_2$, respectively, with lengths satisfying $\alpha(s_1)=-\alpha(P_1) \mod \ell$ and $\alpha(s_2)=-\alpha(P_2) \mod \ell$. Since $P$ is an $\ell$-path, $\alpha(s_1)+\alpha(s_2)=-\alpha(P_1)-\alpha(P_2)=\alpha(e) \mod \ell$. Therefore, $G'$ is indeed a shrinking of $G$. Moreover, $P_1 \ct s_1$ and $P_2\ct s_2$ are $\ell$-paths. Thus, $\PP\cup \{P_1 \ct s_1,P_2\ct s_2\} \setminus \{P\}$ is an $\ell$-path decomposition of $G'$. 

Repeating this process for all wiggly edges in $G$ yields an absolute shrinking of $G$ which has an $\ell$-path-decomposition.
\end{proof}

Observe that two stubs $s_1$, $s_2$ incident with the same vertex with different indices or index $-1$ and such that  $\alpha(s_1)+\alpha(s_2)=0\mod \ell$ form a complex $\ell$-path. We call such a pair of stubs a {\em balanced stub-pair}. The following easy observation asserts that if a complex graph is $\ell$-path-decomposable after removing a balanced stub-pair, the original graph was $\ell$-path-decomposable as well. More generally, if $G$ has an $\ell$-path-decomposition and $\PP$ is a collection of $\ell$-paths, $G\dot{\cup} E(\PP)$ has an $\ell$-path-decomposition.

\begin{observation}\label{obs:balanced-stub}
Let $G$ be a complex graph and let $\{s_1,s_2\}$ be a balanced stub-pair incident with a vertex $v$. 
If the complex graph $G\setminus \{s_1,s_2\}$ has an $\ell$-path-decomposition $\PP$, $G$ has an $\ell$-path-decomposition consisting of $\PP$ and the $\ell$-path formed by $s_1$ and $s_2$.
\end{observation}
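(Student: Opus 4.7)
The plan is to simply exhibit the $\ell$-path-decomposition of $G$ directly by taking the given decomposition $\PP$ of $G\setminus\{s_1,s_2\}$ together with the two-stub path promised by the preceding sentence of the paper.

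First I would observe that the sequence $s_1, v, s_2$ is itself a complex $\ell$-path $P$. The paragraph immediately preceding the observation already asserts this, but it is worth checking quickly against the definition: the sequence begins and ends with stubs (conditions (i) and (ii)); vertices and the only non-vertex elements (the stubs) alternate (condition (iv)); neither stub is repeated, and there are no edges or hyperedges to worry about for conditions (iii), (v), (vi), or (viii); and condition (vii) is exactly the assumption that $s_1$ and $s_2$ carry different indices, or that one of them is $-1$. The length of $P$ is $\alpha(s_1)+\alpha(s_2)\equiv 0 \pmod{\ell}$ by the balanced-pair hypothesis, so $P$ is indeed an $\ell$-path.

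Next I would verify that $\PP\cup\{P\}$ is an $\ell$-path-decomposition of $G$. By hypothesis, the $\ell$-paths in $\PP$ partition the stubs and edges of $G\setminus\{s_1,s_2\}$; the path $P$ uses precisely the stubs $s_1$ and $s_2$ and no edges. Since these two stubs are exactly the stubs removed from $G$ to form $G\setminus\{s_1,s_2\}$, the union $\PP\cup\{P\}$ partitions the stubs and edges of $G$ into $\ell$-paths, as required.

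There is no real obstacle here; the statement is essentially a restatement of the definition of a balanced stub-pair together with the trivial fact that a disjoint union of $\ell$-path-decompositions covering disjoint edge/stub sets is again an $\ell$-path-decomposition. The point of recording it as an observation is to use it freely later: whenever the argument produces a pair of matching leftover stubs at a vertex with compatible indices, they can be absorbed into the decomposition for free.
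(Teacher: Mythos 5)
Your verification is correct and matches the paper's intent exactly: the paper records this observation without proof, treating it as an immediate consequence of the definitions of balanced stub-pair and $\ell$-path-decomposition, and your check of conditions (i)--(viii) plus the disjointness of stub sets is precisely the routine argument the authors omitted.
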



We say that a complex path $P$ starting with a vertex $v$ is {\em universal at $v$} if $P$ contains at least five edges, the first three edges of $P$ are loops incident with $v$ and changing the order of the first two loops yields a complex path $P'$ with the initial index different from the initial index of $P$. (Note that $P$ might be universal at $v$ and not $P^r$ or vice versa.) The terminal index of $P$ and $P'$ is the same. Note that a path without stubs can be universal at both its ends. Then, all four orderings of the first two and the last two loops in the path yield a complex path.

Let $G$ be a complex graph and $P$ be a complex path in $G$ consisting only of vertices and wiggly edges, and  containing at least one wiggly edge. The {\em condensing} of $P$ is the operation consisting of replacing $P$ by a wiggly edge $e_P$ of length $\alpha(P)$ with endpoints equal to the ends of $P$. We define the indices of $e_P$ to be $-1$ if $P$ is universal at the corresponding end, and to be equal to the respective index of $P$ otherwise.
We call a complex graph $G'$ obtained by a series of condensings in $G$ a {\em condensation} of $G$.

\begin{observation}\label{obs:loops}
Let $G$ be a complex graph and let $G'$ be a condensation of $G$. If there exists an $\ell$-path-decomposition of $G'$, there exists an $\ell$-path-decomposition of $G$.
\end{observation}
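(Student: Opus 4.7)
The plan is to reduce to the single-step case by induction on the number of condensings in the series from $G$ to $G'$. So assume that $G'$ is obtained from $G$ by condensing one complex path $P$---comprising only vertices and wiggly edges, and containing at least one wiggly edge, with ends $u$ and $w$---into a single wiggly edge $e_P$. Given an $\ell$-path-decomposition $\PP'$ of $G'$, identify the unique $\ell$-path $Q' \in \PP'$ that contains $e_P$ and write $Q' = Q_1 \ct e_P \ct Q_2$ with $Q_1$ ending at $u$ and $Q_2$ starting at $w$. Set $\PP = (\PP' \setminus \{Q'\}) \cup \{Q_1 \ct P \ct Q_2\}$. Since $G$ differs from $G'$ only in that the edges of $P$ appear in place of $e_P$, $\PP$ correctly partitions the edge and stub sets of $G$; what remains is to verify that $Q_1 \ct P \ct Q_2$ is a valid complex $\ell$-path.

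The length is automatic: $\alpha(Q_1) + \alpha(P) + \alpha(Q_2) = \alpha(Q_1) + \alpha(e_P) + \alpha(Q_2) = \alpha(Q')$, which is divisible by $\ell$. Condition (vi) on vertex repetitions is also straightforward, since $P$ contains at least one wiggly edge: any occurrence of a vertex inside $P$ is separated from any occurrence in $Q_1$ or $Q_2$ by a wiggly edge of $P$, while repetitions between $Q_1$ and $Q_2$ were already separated by $e_P$ in $Q'$ and are now separated by the wiggly edges of $P$. Alternation and non-repetition of edges and stubs transfer directly from $Q'$ and $P$.

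The principal obstacle, and the motivation for the definition of \emph{universal}, is the index-consistency condition (vii) at the two new junctions $Q_1 \mid P$ and $P \mid Q_2$. Consider the $u$-side. By the rule governing the indices of $e_P$, the index $i_1^{e_P}$ equals either the initial index of $P$ (if $P$ is not universal at $u$) or $-1$ (if $P$ is universal at $u$). In the non-universal case the sequence of indices around the junction is unchanged from $Q'$, so validity transfers. In the universal case, $Q'$ imposed no constraint on the terminal index $t$ of $Q_1$ because $i_1^{e_P} = -1$; but after substituting $P$, we must ensure that the actual initial index $p$ of $P$ satisfies $p \ne t$ (or $t = -1$). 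Universality furnishes a second admissible initial index $p' \ne p$, obtained by swapping the first two loops of $P$, so at most one of $p$ and $p'$ equals $t$ and we may pick the ordering that avoids it. The symmetric argument applies to the $w$-side using the last two loops; since the first two and last two loops of $P$ are disjoint, the swaps at the two ends are independent, and one of the (up to four) orderings yields a valid complex $\ell$-path $Q_1 \ct P \ct Q_2$. Iterating this single-step argument along the series of condensings from $G$ to $G'$ produces the desired $\ell$-path-decomposition of $G$.
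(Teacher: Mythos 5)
Your proof is correct and follows the same approach as the paper's: pass to the single-step case, write $Q' = Q_1 \ct e_P \ct Q_2$, and substitute $P$ for $e_P$. The paper simply asserts that $Q_1 \circ P \circ Q_2$ is an $\ell$-path without elaboration; you supply the missing justification for the index-consistency condition (vii) at the junctions, correctly invoking universality to swap the first/last two loops when the default initial (or terminal) index of $P$ collides with the adjacent index from $Q_1$ (or $Q_2$), and noting that the two swaps are independent since the first two and last two loops are disjoint.
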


\begin{proof}
It is enough to prove the statement for $G'$ obtained from $G$ by condensing one path $P$ into the wiggly edge $e_P$. Consider an  $\ell$-path-decomposition of $G'$ and let $Q$ be the path in this decomposition containing $e_P$. Thus,  $Q=Q_1\circ e_P\circ Q_2$ for some (possibly empty) complex paths $Q_1, Q_2$ in $G'$. Then $Q'=Q_1\circ P\circ Q_2$ is an $\ell$-path in $G$.
\end{proof}

We can condense unit loops into loops with both indices $-1$ as follows.

\begin{observation}\label{obs:uni-path}
Let $G$ be a complex graph. Let $r=\iota(G)$ and let $v$ be a vertex of $G$ incident with at least $3r+\ell+4$ unit loops. For any given length $\ell'\leq \ell$, there exists a complex path $P$ of length $\ell'\mod \ell$ consisting of at most $\ell+4$ unit loops incident with $v$ such that both of its ends are universal or have index $-1$.
\end{observation}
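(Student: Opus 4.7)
Write $\mathcal L$ for the set of unit loops incident with $v$, so $|\mathcal L|=N\ge 3r+\ell+4$. First, I would choose the unique integer $k\in\{5,6,\ldots,\ell+4\}$ with $k\equiv\ell'\pmod\ell$; such $k$ exists because these $\ell$ consecutive integers form a complete system of residues modulo $\ell$. The path $P$ will consist of exactly $k$ unit loops at $v$, so $\alpha(P)=k\equiv\ell'\pmod\ell$ and $P$ has the $\ge 5$ edges required for the universality definition.

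The central counting ingredient is the bound $\iota_G(v,i)\le r$ for every $i\neq -1$: at most $r$ loops of $\mathcal L$ contain $i$ in any slot. Consequently, for any set of at most three non-$-1$ indices, at least $N-3r\ge \ell+4 \ge k$ loops of $\mathcal L$ use none of these indices.

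I would build $P$ in three stages, dealing with each end separately and then with the middle. At each end, I split into cases. If some loop has $-1$ in one of its slots, I place it as $L_1$ (resp.\ $L_k$) and orient so that its outward (resp.\ inward) index is $-1$; the corresponding end-index of $P$ is then $-1$ and the end condition holds trivially. Otherwise, every loop at $v$ has both slots non-$-1$, and I aim for universality instead: I pick anchor loops $L_1,L_2$ with index pairs $\{x_1,y_1\},\{x_2,y_2\}$ and orientations such that the path $v,L_1,v,L_2,v,L_3,\ldots$ is valid and the swap $v,L_2,v,L_1,v,L_3,\ldots$ -- with possibly re-chosen internal orientations of the first two loops -- is also valid and has a different initial index. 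The counting above, applied to the few indices that must be avoided to be compatible with $L_3$, shows that suitable anchor loops exist. The last two loops $L_{k-1},L_k$ are chosen symmetrically.

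The middle loops $L_3,\ldots,L_{k-2}$ are filled in greedily: given the current outward index $b$, the only loops that cannot be used next are those with both slots equal to $b$, of which there are at most $r/2$ by the $\iota$-bound, so valid choices are always abundant in the remaining pool. The main obstacle will be the interplay between the three simultaneous requirements -- path-validity (alternation of indices), swap-invariance for universality at the ends, and exact length $k$ -- which all concern the same small pool of loops; the slack encoded in ``$3r+\ell+4$'' (three applications of the $\iota$-bound against the $\ell+4$ loops needed for length $k$) is precisely what is needed to satisfy all of them at once.
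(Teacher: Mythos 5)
Your proposal is correct and follows essentially the same route as the paper: assemble the middle of the path greedily from unit loops at $v$, then spend the $3r$ slack guaranteed by $\iota_G(v,\cdot)\le r$ to choose two outermost loops at each end whose index sets are disjoint from each other and from the adjacent middle index, yielding universality (or falling back to index $-1$). The differences are only cosmetic -- the paper builds a middle $P'$ of length $\ell'-4\bmod\ell$ and appends four extra loops $e,e',f,f'$, whereas you target exactly $k\in\{5,\dots,\ell+4\}$ loops and make an explicit case split for the $-1$ fallback (which, incidentally, handles the corner case where every available index is $-1$ a bit more cleanly than the paper's wording).
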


\begin{proof}

Let $P'$ be a complex path of length $\ell'-4\mod \ell$ consisting of unit loops incident with $v$ and containing at least one loop. Observe that $P'$ can be constructed by greedily adding unit loops one by one, using at most $\ell$ of them, since $v$ is incident with at least $3r+\ell+4$ unit loops and therefore, after using less than $\ell$ loops, at least one of the remaining loops has one of its indices equal to $-1$ or different from the terminal index of the partially constructed $P'$. 

Let $i_1$ be the initial and $i_2$ be the terminal index of $P'$. Then, there exist four different unit loops $e,e',f,f'$ incident with $v$ that are not in $P'$ such that $e\ct e'\ct P'\ct f\ct f'$, $e\ct e'\ct P' \ct f'\ct f$, $e'\ct e\ct P'\ct f\ct f'$ and $e\ct e'\ct P' \ct f'\ct f$ are complex paths, each two of them differing in at least one index. It is sufficient to choose $e$ with indices $i^e_1,i^e_2$ different from $i_1$ or equal to $-1$, $e'$ with indices different from $i^e_1,i^e_2$ and $i_1$ or equal to $-1$ and similarly, $f$ with $i^f_1,i^f_2$ different from $i_2$ or equal to $-1$ and $f'$ with indices different from $i^f_1,i^f_2$ and $i_2$ or equal to $-1$. This is possible, because as long as $v$ is incident with more than $3r$ unit loops, it is possible to find a loop with indices different from three specified values or equal to $-1$.

\end{proof}

The purpose of the next lemma is to construct a condensation such that every vertex $v$ either has large unit degree, or all non-loops incident with it are ordinary or have index $-1$ at $v$.

\begin{lemma}\label{lem:condens}
Let $G$ be a $k$-edge-connected complex graph without stubs with $\udeg_G v \geq (1-\frac{1}{8(\ell-1)}) \deg_G v$ for every $v$ and minimum degree at least $7\iota(G)$. Then, there exists a complex graph $H$ obtained from a condensation of $G$ by deleting all the loops such that $H$ is $k$-edge-connected and for every vertex $v$, $\deg_H v=\deg_G v -\ldeg_G v$ and one of the followings holds 
\begin{enumerate}[label=(\alph*)]
\item\label{it:big-vert} $\udeg_{H} v \geq \udeg_{G} v/8\ell$ and $\udeg_{H} v \geq 3/4 \deg_{H} v$,
\item\label{it:univ-vert}  every edge of $H$ incident with $v$ is ordinary or unit with index $-1$ at $v$.
\end{enumerate} 
In particular, if $G$ has minimum degree $d$ and $\iota(G)<\varepsilon (1-\frac{1}{8(\ell-1)}) d/8\ell$, $H$ is $\varepsilon$-free.

\end{lemma}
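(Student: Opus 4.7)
The plan is to use the unit loops at each vertex as a reservoir to absorb bad incident wiggly edges via Observation~\ref{obs:uni-path}, landing those vertices in case (b); the remaining vertices, having few loops, land in case (a) simply by loop-deletion. Set $r = \iota(G)$ and for each vertex $v$ let $L_v$ be the number of unit loops at $v$ and $B_v$ the multiset of non-loop wiggly edges at $v$ that are bad at $v$ (non-unit length, or index $\neq -1$ at $v$). Declare $v$ \emph{loop-rich} if $L_v > 2\bigl(\udeg_G(v) - \tfrac{3}{4}\deg_G(v)\bigr)$ and \emph{loop-poor} otherwise; this threshold is precisely the boundary beyond which merely deleting all loops at $v$ would drop $\udeg_H(v)$ below $\tfrac{3}{4}\deg_H(v)$, violating (a2).

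For each loop-rich $v$, I process each $e \in B_v$ (with other endpoint $u$) in sequence: take $\ell' \leq \ell$ with $\ell' + \alpha(e) \equiv 1 \pmod{\ell}$, apply Observation~\ref{obs:uni-path} at $v$ to extract a universal loop-path $P$ at $v$ using at most $\ell + 4$ unit loops, and condense $e \cdot P$ into a single wiggly edge $e'$ from $u$ to $v$; by universality of $P$ at its $v$-end (or its index already being $-1$), $e'$ is unit with index $-1$ at $v$, and at $u$ it inherits the index of $e$ (since the first edge of the concatenated path at $u$ is $e$, which is not a loop, so the condensation rule gives back $e$'s index at $u$). After processing every loop-rich vertex, delete every remaining loop to obtain $H$. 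Degree preservation is a direct count: each condensation of $e \cdot P$ at $v$ reduces $\deg v$ by exactly $2|P|$ while leaving $\deg u$ unchanged (one incidence lost at $e$, one gained at $e'$), and deletion of the leftover $L_v - \sum_i |P_i|$ unit loops together with any non-unit loops at $v$ consumes the remaining $\ldeg_G(v) - 2\sum_i |P_i|$, summing exactly to $\ldeg_G(v)$. Condensation preserves $k$-edge-connectivity because any cut in $H$ corresponds to a cut in $G$ of at least the same size (paths contribute at least one crossing whenever their endpoints are on different sides), and loop-deletion does not affect connectivity. Case (b) holds at every loop-rich vertex by construction; for loop-poor $v$ the bound $\udeg_H(v) \geq \udeg_G(v) - 2L_v$ together with the loop-rich threshold and the hypothesis $\udeg_G(v) \geq (1 - \tfrac{1}{8(\ell-1)})\deg_G(v)$ yields both (a1) and (a2).

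The main obstacle is verifying that every loop-rich $v$ really has enough unit loops to handle all of $B_v$, namely $L_v \geq |B_v|(\ell+4) + 3r$, so that Observation~\ref{obs:uni-path} is applicable at every step (since each invocation consumes at most $\ell+4$ loops and requires $3r + \ell + 4$ to remain). To bound $|B_v|$: non-unit wiggly non-loops at $v$ contribute at most $\deg_G(v) - \udeg_G(v) \leq \deg_G(v)/(8(\ell-1))$ to it, while non-$(-1)$-indexed unit wiggly edges at $v$ contribute at most $r$ per distinct index class; combining with the loop-rich lower bound $L_v > 2(\udeg_G(v) - \tfrac{3}{4}\deg_G(v)) \geq \deg_G(v)/8$ (from the $\udeg_G$ hypothesis for $\ell \geq 2$) and the hypothesis $\deg_G(v) \geq 7r$, the required inequality follows. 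For the $\varepsilon$-free conclusion, note that $\iota(H) \leq \iota(G)$, since condensation either preserves indices at unabsorbed endpoints or replaces them by $-1$ (which does not count towards $\iota$) at absorbed endpoints, while loop-deletion only removes incidences; then in case (a), $\edeg_H(v) \geq \udeg_H(v) \geq \udeg_G(v)/(8\ell) \geq (1 - \tfrac{1}{8(\ell-1)})d/(8\ell)$, and case-(b) vertices contribute no non-$(-1)$ indices at themselves, so the strict bound $\iota(G) < \varepsilon(1 - \tfrac{1}{8(\ell-1)})d/(8\ell)$ delivers $\iota(H) \leq \varepsilon \edeg_H(v)$ at every vertex.
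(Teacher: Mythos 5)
Your high-level strategy (absorb bad incident wiggly edges into unit loops via Observation~\ref{obs:uni-path}, delete the rest) is the same as the paper's, and the mechanics you describe --- degree preservation, connectivity preservation under condensation, the use of universality to get index $-1$ --- are correct. The gap is in the dichotomy: your loop-rich/loop-poor split does not make either branch of the lemma go through.

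\textbf{Loop-poor vertices can violate (a1).} Your threshold $L_v \le 2\bigl(\udeg_G v - \tfrac34\deg_G v\bigr)$ is, as you say, precisely calibrated to give (a2) after loop deletion, but it says nothing about (a1). Take $v$ with $\udeg_G v = \deg_G v$ (every edge unit); then the threshold equals $\deg_G v/2$, so $v$ is automatically loop-poor (since $L_v\le \deg_G v/2$ always). If $L_v$ is close to $\deg_G v/2$, then after deleting loops $\udeg_H v = \udeg_G v - 2L_v$ is tiny (even $1$), while $\udeg_G v/8\ell = \deg_G v/8\ell$ is arbitrarily large. So (a1) fails. Your final sentence ("the loop-rich threshold together with the $\udeg_G$ hypothesis yields both (a1) and (a2)") is simply not verified, and cannot be.

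\textbf{Loop-rich vertices can have too many bad edges to absorb.} The inequality you need is $L_v \ge |B_v|(\ell+4) + 3r$, and you never establish a usable bound on $|B_v|$. Your claim that "non-$(-1)$-indexed unit wiggly edges at $v$ contribute at most $r$ per distinct index class" is true but useless: the number of distinct index classes at $v$ is not bounded by anything you control, so this gives no bound on $|B_v|$ at all. Even ignoring those edges and keeping only the non-unit non-loops (bounded by $\deg_G v - \udeg_G v$), the arithmetic fails: write $x = \deg_G v - \udeg_G v$, so that loop-richness gives $L_v > 2\udeg_G v - \tfrac32\deg_G v$ and the number of non-loops is $< 4x$. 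The required inequality becomes $\deg_G v/2 - 2x \ge 4(\ell+4)x + 3r$, i.e. $\deg_G v\bigl(\tfrac12 - \tfrac{4\ell+18}{8(\ell-1)}\bigr)\ge 3r$ in the worst case $x = \deg_G v/(8(\ell-1))$; the coefficient $\tfrac12 - \tfrac{4\ell+18}{8(\ell-1)}$ is negative for every $\ell\ge 2$, so this fails.

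The root cause is that your split is by the number of unit loops, whereas what you actually need to control is the number of non-loops. The paper's split is by whether $\deg_G v - \ldeg_G v \ge \udeg_G v/8\ell$ (i.e.\ whether $\deg_H v$ is already large enough for (a1)). In the first group the degree bound gives (a1) directly and one only needs to fix (a2) by converting non-unit non-loops to unit ones (a much weaker demand than making them index-$-1$); in the second group the number of non-loops is $< \udeg_G v/8\ell$, small enough that absorbing all wiggly non-loops into universal loop-paths is feasible. You should adopt that dichotomy, not yours.
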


\begin{proof}
Let $r:=\iota(G)$. We start with $H=G$ and modify it until $H$  satisfies the statement of the lemma. We treat the vertices of $H$ one by one. When treating a vertex $v$, we condense $1$-paths consisting of one non-loop and several loops incident with $v$ and then delete all the remaining loops incident with $v$. Before giving exact details of the process, we argue that we can indeed treat vertices one by one. In particular, we show that if a vertex $v$ satisfies~\ref{it:big-vert} or~\ref{it:univ-vert} after being treated, it satisfies~\ref{it:big-vert} or~\ref{it:univ-vert} at the end of the process as well.

Observe that after treating a vertex $v$, the degree of $v$ in the intermediate version of $H$ is equal to $\deg_G v -\ldeg_G v$  . Moreover, treating $v$ does not change the degree of other vertices and does not decrease the unit degree of other vertices. Also note that if $e$ is a non-loop with endpoints $u$ and $v$ and with index $-1$ at $u$, condensing a path consisting of $e$ and loops incident with $v$ yields a non-loop with index $-1$ at $u$. We do not modify ordinary edges in any way. It follows that we can treat vertices independently one after another.

Now, we describe treatment of the vertices in detail. We divide the vertices into two disjoint groups and treat vertices in each of the groups differently.

A vertex $v$ belongs to the first group if it satisfies $\deg_{G} v - \ldeg_{G} v \geq \udeg_{G} v/8\ell$ and to the second group otherwise. After being treated, the vertices in the first group will satisfy~\ref{it:big-vert} and the vertices in the second group will satisfy~\ref{it:univ-vert}. 

We treat every vertex $v$ in the first group as follows.
If more than three quarters of the non-loops incident with $v$ are unit, it is clearly enough to delete all the loops incident with $v$ to obtain a graph where $v$ satisfies the conditions of the lemma.
Otherwise, let $w$ be a non-unit non-loop incident with $v$.

We construct a path $P_w$ consisting of at most $\ell-1$ unit loops incident with $v$ of length $1-\alpha(w)\mod \ell$ such that $w\ct P_w$ is a complex path. Condensing $w\ct P_w$ yields a unit non-loop. We repeat this process for non-unit non-loops incident with $v$ as long as possible.
Note that the number of non-loops incident with $v$ does not change by this procedure and the graph stays $k$-edge-connected. The process terminates either when there is no non-unit non-loop incident with $v$, or it is impossible to construct $P_w$. We claim that the latter never happens.

Observe that as long as there are at least $r/2+\ell-1$ unit loops incident with $v$, a complex path $P_w$ consisting of at most $\ell-1$ unit loops of length  $1-\alpha(w) \mod \ell$ with the initial index $-1$ or different from the index of $w$ at $v$, can be constructed greedily for any $w$ incident with $v$. Let $n=\deg_G v -\udeg_G v$, $u_0$ the number of unit non-loops incident with $v$ and $u_1$ the number of unit loops incident with $v$. 
It is enough to show that $n(\ell-1)\leq u_1- r/2$. Then, $P_w$ can be constructed for every non-unit non-loop $w$.

\begin{claim}
It holds that $n(\ell-1)\leq u_1- r/2$.
\end{claim}

\begin{inproof}
First, we show that $u_1\geq \deg_G v/4$. 

Observe that $n+u_0+2u_1=\deg_G v$.
By our assumptions, $u_0\leq 3n$ and $n\leq (1-(1-\frac{1}{8(\ell-1)}))\deg_G v$. Consequently, $4\frac{1}{8(\ell-1)}\deg_G v+2u_1\geq \deg_G v$. Thus, $u_1\geq \frac{1-\frac{1}{2(\ell-1)}}{2}\deg_G v =\frac{2(\ell-1)-1}{4(\ell-1)} \deg_G v $. Recall that we assume that $\ell\geq 2$. Therefore, we have $u_1\geq \deg_G v /4$.

We also have $n\leq \frac{\deg_G v}{8(\ell-1)}$.
Thus, $n(\ell-1)\leq \deg_G v/8\leq \deg_G v/4 - r/2\leq u_1- r/2$, because $\deg_G v\geq 7r$.

\end{inproof}

We treat every vertex $v$ in the second group as follows.
We show that $v$ is incident with at least $3r+(\ell+4)\udeg_{G} v/(8\ell)$ unit loops.  Then, by repeated application of Observation~\ref{obs:uni-path}, we construct a path $P_w$ such that condensing $w\ct P_w$ yields a unit edge with index $-1$ at $v$ for every wiggly non-loop $w$ incident with $v$.

Since there are at most $\udeg_{G} v/(8 \ell)$ non-loops incident with $v$ (as $v$ is in the second group), the number of unit loops incident with $v$ is at least  $\frac{8\ell-1}{8 \ell}\udeg_{G} v$.  This is greater than $\udeg_{G} v/(8 \ell)\cdot(\ell+4)+3r$, because, using that $1-\frac{1}{8(\ell-1)}\geq 7/8, \ell\geq 2$ and $deg_G v\geq 7r$, we have 
\begin{align*}
\frac{8\ell-1}{8 \ell}\udeg_{G} v-\left(\frac{\ell+4}{8 \ell}\udeg_{G} v +{3r}\right)
&=\frac{7\ell-5}{8 \ell}\udeg_{G} v-{3r}\\
&\geq \left(1-\frac{1}{8(\ell-1)}\right) \deg_G v \left(\frac{7}{8}-\frac{5}{8\ell}\right)-{3r}\\
&\geq 7/8\cdot 7r\cdot 9/16 - 3r\\
&=(441/128-3)r>0
\end{align*}

\end{proof}

We conclude this section by showing that all the loops in a $1/4$-free graph with one vertex can be concatenated into a single path. 

\begin{lemma}\label{lem:singleton}
Let $G$ be a complex graph without stubs that has only one vertex, $L$ loops and $\iota(G)\leq L/2$. Then all the loops of $G$ can be concatenated to form a single complex path. 
\end{lemma}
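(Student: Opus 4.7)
The plan is to construct the required concatenation greedily. Call a loop \emph{rigid at $k$} if it has the form $([(v,k),(v,k)],\alpha)$ with $k\neq -1$, and denote by $T_k$ the set of loops rigid at $k$. Because each such loop contributes $2$ to $\iota_G(v,k)$, the hypothesis $\iota(G)\leq L/2$ forces $|T_k|\leq L/4$ for every $k$. A short case analysis on condition~(vii) of the definition of a complex path shows that two loops $e$ and $f$ can be placed consecutively with suitably chosen orientations unless they both lie in the same $T_k$: in every other configuration one can orient the second loop so that its entry index is either $-1$ or different from the exit index of the first.

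I would then build the path one loop at a time, maintaining the current terminal index $y$. At each step I scan the unused loops for one that has an index distinct from $y$ or equal to $-1$; such a loop can be appended by orienting that side as the entry. The procedure can fail only when every remaining loop lies in $T_y$, which in particular requires at most $L/4$ loops to be left.

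To rule out this dead end, I would augment the greedy with a look-ahead governing the choice of exit. When the loop being placed admits two orientations, I choose the orientation whose resulting exit $y'$ preserves the invariant that the unused loops are not contained in a single $T_{y'}$. As long as the size of the unused set exceeds $\max_k |T_k|\leq L/4$ and meets more than one rigid class, such an admissible exit exists. The final handful of extensions, where the unused pool becomes thin, would be handled by a separate short argument that uses $|T_k|\leq L/4$ sharply, together with the option of re-orienting an earlier non-rigid loop to flip the terminal index locally.

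The main obstacle is orchestrating this endgame: if every unused loop is rigid at some $k$ while the current terminal index also equals $k$, we must alter the orientation of an earlier non-rigid loop to flip the exit from $k$, and the alteration must not create a new conflict further back in the path. Bounding the length of the resulting cascade requires invoking $\iota_G(v,k)\leq L/2$ a second time, now to cap the number of non-rigid loops that can carry index $k$ on one side; the interplay between these two bounds is the technical heart of the argument.
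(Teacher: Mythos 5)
You have correctly identified the crux: the loops with both indices equal to some fixed $k\neq -1$ (your rigid sets $T_k$, the paper's set $R$ partitioned by index) are the only potential obstruction, and $\iota(G)\leq L/2$ forces $|T_k|\leq L/4$ for each $k$. However, your proof does not actually get past the obstruction; it names it and defers it. The ``separate short argument'' for the thin endgame and the ``re-orient an earlier non-rigid loop'' repair step are precisely where a complete proof must do work, and you yourself flag the bound on the resulting cascade as an unsettled ``technical heart.'' A re-orientation is dangerous: flipping a non-rigid loop changes not only its exit but also its entry, so it can create a fresh conflict with the loop preceding it, and nothing in the outline bounds how far such repairs propagate or shows termination. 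As written, the proposal is a plan, not a proof.

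The paper avoids any backtracking by constructing the initial segment of the path globally rather than greedily. It places the hard loops (the set $R$ of rigid loops, padded if necessary by a small helper set $T\subseteq R'$ of non-rigid loops avoiding the dominant index $i$) at the front, arranges them in an explicit interleaving pattern $e_1,e_{\lceil n/2\rceil+1},e_2,e_{\lceil n/2\rceil+2},\ldots$ after sorting by index frequency, and checks once and for all that this is a complex path. The size of $T$, namely $|S|-|R\setminus S|-1$ when a majority index $i$ exists, is chosen exactly so that the $S$-loops are separated; the bound $\iota_G(v,i)\leq L/2$ is used only to verify that enough index-$i$-free loops remain in $R'$ to supply $T$. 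Once the hard loops are in place, every remaining loop is non-rigid (or has index $-1$) and therefore always admits a valid orientation, so the rest is an unconditional greedy append with no failure mode and no look-ahead. That structure, front-loading the danger so the greedy tail never meets it, is what your argument is missing.
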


\begin{proof}
Let $R$ be the set of loops incident with $v$ such that both of their indices are the same and different from $-1$ (that is, $i_1^e=i_2^e\neq -1$ for every $e\in R$) and let $R'$ be the set of the remaining loops. 
If more than $|R|/2$ loops in $R$ have the same index $i$, let $S$ be the set of these loops. Note that $|S|\leq L/4$ and there are at most $\iota(G)-2|S|\leq L/2-2|S|$ loops with index $i$ in $R'$. Thus, there is a set $T$ of loops in $R'$ that do not have index $i$ such that $|T|=|S|-|R\setminus S|-1$. Otherwise, let $T=\emptyset$.

Let $e_1,\ldots, e_n$ be a sequence of loops in $R\cup T$, with the loops in $R$ first, ordered from the most frequent index in $R$, to the least frequent, followed by loops of $T$ in any order. 

Observe that the loops of $R\cup T$ in the order $e_1,e_{\lceil n/2\rceil +1}, e_2, e_{\lceil n/2\rceil +2}, \ldots,$  $e_i, e_{\lceil n/2\rceil +i},\ldots$ form a complex path.

Now, observe that all the remaining loops incident with $v$ can be appended to this path greedily, since each of them has two distinct indices (thus, one of the indices is different from the terminal index of the path to which we want to append the loop) or indices $-1$.

\end{proof}

\section{Tools}\label{sec:tools}
In this section, we introduce several tools used in the rest of the paper. First, we focus on connectivity in hypergraphs and state generalizations of some classical results for graphs.

The following Lemma~\ref{cor:sparse-tree} was essentially proven in~\cite{bib:thomassen-paths}. Our version (which we prove in~\cite{bib:our-trees}) differs in some details. 


\begin{lemma}\label{cor:sparse-tree}
For every $\varepsilon>0$ and integer $m$, there exists $L$ such that every $2^n$-edge-connected stub graph $G$ with minimum degree at least $L$, where $n=2+m+ \lceil\log (1/\varepsilon)\rceil$,   
has $2^{m}$ edge-disjoint spanning trees $T_1,\ldots, T_{2^m}$ such that
\[\sum_{i\in [2^m]}\deg_{T_i} v \leq \varepsilon \deg_{G} v\]
 for every $v\in V(G)$.
\end{lemma}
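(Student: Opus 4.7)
The plan is to combine a tree-packing theorem with a sparsification step. First, since $G$ is $2^n$-edge-connected, a version of Nash-Williams' tree-packing theorem (stubs contribute nothing to connectivity between vertices, so we work on the underlying graph) yields $2^{n-1}$ pairwise edge-disjoint spanning trees $T_1', \ldots, T_{2^{n-1}}'$. Edge-disjointness immediately gives $\sum_{i} \deg_{T_i'} v \leq \deg_G v$ at every vertex $v$, so the aggregate degree budget already matches the target; the task is to select $2^m$ of these trees whose degree sum is uniformly below $\varepsilon \deg_G v$.

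I would then partition the $2^{n-1}$ trees into $2^m$ groups of $2^{n-1-m}$ trees, yielding $2^m$ edge-disjoint spanning subgraphs $H_1, \ldots, H_{2^m}$, each $2^{n-1-m}$-edge-connected (Menger's theorem turns the $2^{n-1-m}$ tree paths between any two vertices into that many edge-disjoint $uv$-paths). In each $H_j$ I would extract a single spanning tree $T_j$ with per-vertex degree bounded by $\deg_{T_j} v \leq c\cdot \deg_{H_j} v/2^{n-1-m} + O(1)$, using a low-degree spanning tree theorem for highly edge-connected graphs. Summing,
\[
\sum_{j=1}^{2^m} \deg_{T_j} v \;\leq\; c\cdot \deg_G v / 2^{n-1-m} + O(2^m).
\]
Since $n-1-m = 1 + \lceil \log(1/\varepsilon)\rceil$, the first term is at most $c\varepsilon \deg_G v / 2$, and choosing $L$ of order $2^m/\varepsilon$ absorbs the additive $O(2^m)$ within the remaining $\varepsilon \deg_G v / 2$ budget.

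The main obstacle is producing, inside each $H_j$, a single spanning tree with per-vertex degree proportional to $\deg_{H_j} v / 2^{n-1-m}$ without any dependence on $|V(G)|$. A uniformly random tree from the tree packing of $H_j$ achieves the correct expectation at each vertex, but turning this expectation into a per-vertex bound via concentration and a union bound over $V(G)$ would force $L$ to grow with $|V(G)|$, which the statement forbids. Bypassing this requires either a deterministic degree-balanced tree-packing theorem or an iterative halving argument that balances degrees stepwise, along the lines of the approach of~\cite{bib:thomassen-paths}; the full technical proof is deferred to the companion paper~\cite{bib:our-trees}.
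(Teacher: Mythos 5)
The paper does not give a proof of Lemma~\ref{cor:sparse-tree} in this document; it is explicitly cited as ``essentially proven in~\cite{bib:thomassen-paths}'' and proved in the companion paper~\cite{bib:our-trees}, so there is no internal proof here to compare against.

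The gap you flag at step 3 is real, and it is the whole game. There is no off-the-shelf ``low-degree spanning tree theorem'' that, from a $k$-edge-connected graph $H$, produces a single spanning tree $T$ with $\deg_T v \leq c\,\deg_H v/k + O(1)$ simultaneously at every vertex $v$. Averaging over the $k$ trees of a Tutte--Nash-Williams packing gives the right bound at each vertex taken individually, but a tree that is at or below average at \emph{all} vertices simultaneously need not exist in the packing: one tree may carry nearly all of $H$'s degree at a vertex $u$, another nearly all at a vertex $w$, and none be light at both. Achieving simultaneous balance requires re-routing edges between trees, which is precisely the substance of the lemma being proved; so ``partition the packing into groups, then extract a light tree per group'' reduces the statement to a problem of comparable difficulty without real progress. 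The route you gesture at in your last sentence is the correct one: interleave the two ingredients instead of separating them. One iteratively bipartitions the edge set by an Eulerian-tour coloring (exactly as in Lemma~\ref{lem:fraction} of this paper), which halves every vertex degree to within an additive constant, and at each round one reserves a subpacking of spanning trees inside each half so that connectivity survives to the next round. After $n-1$ rounds every piece is connected with degree about $\deg_G v/2^{n-1}$ at each $v$; taking $2^m$ of them and the choice $n = 2 + m + \lceil\log(1/\varepsilon)\rceil$ gives the stated bound, with the minimum-degree hypothesis $L$ used only to absorb the additive constants accumulated over the rounds. Nowhere does one need a stand-alone degree-balanced spanning tree extraction.
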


A {\em bridge} in a graph or a hypergraph is an edge such that after removing it, the graph or the hypergraph becomes disconnected.

A hypergraph $H$ is {\em $k$-partition-connected} if for any partition of $V(H)$ into $p$ non-empty parts, the number of hyperedges 
of $H$ containing vertices in more than one part is at least $k(p-1)$. The following easy observation links partition-connectivity to edge-connectivity.

\begin{observation}\label{obs:part-con}
A $3k$-edge-connected hypergraph $H$ with edges of size at most three is $k$-partition-connected. 
\end{observation}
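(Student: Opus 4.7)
The plan is a double-counting argument. Fix a partition $V(H)=V_1\sqcup\cdots\sqcup V_p$ into nonempty parts; the case $p=1$ is trivial, so assume $p\ge 2$. For each $i\in[p]$, let $\partial V_i$ denote the set of hyperedges of $H$ that contain at least one vertex in $V_i$ and at least one vertex in $V\setminus V_i$, and let $c$ be the total number of hyperedges crossing the partition (i.e., hyperedges containing vertices in more than one part). The goal is to show $c\ge k(p-1)$.

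First I would establish the lower bound $\sum_{i=1}^p |\partial V_i|\ge 3kp$. Indeed, since each $V_i$ is a proper nonempty subset of $V(H)$, the set $\partial V_i$ is exactly an edge-cut separating $V_i$ from $V\setminus V_i$; removing all its hyperedges disconnects $H$. By $3k$-edge-connectivity, we must have $|\partial V_i|\ge 3k$ for each $i$, and summing over $i$ gives the claim.

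Next I would establish the upper bound $\sum_{i=1}^p |\partial V_i|\le 3c$. A hyperedge $e$ of size $s\le 3$ lies in $\partial V_i$ only when $V_i$ meets $e$ without containing all its incidences, so $e$ contributes to at most $s\le 3$ of the sets $\partial V_i$ (one per distinct part in which its vertices lie, and only if it crosses). In particular, hyperedges entirely inside one part contribute $0$, and every crossing hyperedge contributes at most $3$ to the sum. Combining the two bounds,
\[
3kp \le \sum_{i=1}^p |\partial V_i| \le 3c,
\]
so $c\ge kp\ge k(p-1)$, as required.

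There is no genuine obstacle here: the statement is a direct consequence of the basic edge-cut characterization of edge-connectivity together with the crude bound that a hyperedge of size at most three can cross at most three parts of a partition. The only mild point of care is making sure that the edge-connectivity hypothesis is applied in the form ``every edge-cut has size at least $3k$,'' which follows immediately from the definition given in Section~\ref{sec:complex}.
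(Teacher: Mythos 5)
Your proof is correct and follows essentially the same double-counting argument as the paper: each part contributes at least $3k$ crossing hyperedges by edge-connectivity, and each crossing hyperedge of size at most three is counted at most three times, giving $c\ge kp\ge k(p-1)$. Your write-up is just a more explicit version of the paper's brief argument.
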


\begin{proof}
Let $\HH$ be a partition of $H$ into $p$ nonempty parts. Since $H$ is $3k$-edge-connected, every part of $\HH$ is connected to other parts by at least $3k$ edges. Each such edge contains vertices in at most three parts of $\HH$, therefore in total there are at least $3kp/3=kp$ edges containing vertices in more than one part of $\HH$.
\end{proof}

A {\em spanning hypertree} of a hypergraph $(V,E)$ is a hypergraph $(V,F)$, where $F\subseteq E$, $|F|=|V|-1$, $\bigcup_{e\in F}e=V$ and there are at most $|X|-1$ hyperedges $e\in F$ such that $e\subseteq X$ for every $X\subseteq V$.
The classical result of Nash-Williams and Tutte about the number of edge-disjoint spanning trees 
 was generalized to hypergraphs in~\cite{bib:nwh} as follows.

\begin{theorem}[Nash-Williams for hypergraphs~\cite{bib:nwh}]\label{thm:nwh}
A hypergraph $H$ is $k$-partition-connected if and only if $H$ contains $k$-hyperedge-disjoint spanning hypertrees.

\end{theorem}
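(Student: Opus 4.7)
The plan is to prove the two implications separately, with the harder "partition-connected implies spanning hypertrees" direction going through matroid union theory.

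For the easy direction, suppose $H$ admits $k$ edge-disjoint spanning hypertrees $T_1, \ldots, T_k$. Given any partition $\pi$ of $V(H)$ into $p$ non-empty parts, the first step is to show each $T_i$ contributes at least $p-1$ hyperedges crossing $\pi$; summing over $i$ then yields the required $k(p-1)$ crossing edges. The key sub-claim is that a spanning hypertree is connected, which follows from the definition: the conditions $|F|=|V|-1$, $\bigcup F = V$, and $|\{e\in F : e\subseteq X\}|\le |X|-1$ for every $X$ together force connectivity (a disconnected spanning subhypergraph satisfying the last inequality in each component would have strictly fewer than $|V|-1$ edges in total). After contracting each part of $\pi$ inside $T_i$, we obtain a connected hypergraph on $p$ super-vertices, which must have at least $p-1$ non-loop hyperedges, and these correspond exactly to edges of $T_i$ crossing $\pi$.

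For the hard direction, the plan is to apply Edmonds' matroid union theorem to the hypergraphic (Lorea) matroid $M$ on the ground set $E(H)$, whose independent sets are those $F\subseteq E(H)$ satisfying $|\{e\in F : e\subseteq X\}|\le |X|-1$ for every non-empty $X\subseteq V$. Spanning hypertrees are exactly the bases of the truncation of $M$ to rank $|V|-1$, so the task becomes to produce $k$ pairwise disjoint bases of this truncated matroid. By Edmonds' matroid union theorem, such bases exist if and only if
\[
|E\setminus F| + k\cdot r_M(F) \ge k(|V|-1) \qquad \text{for every } F\subseteq E.
\]
To verify this, fix $F$ and let $\pi$ be the partition of $V$ into the connected components of the partial hypergraph $(V,F)$. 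No edge of $F$ crosses $\pi$, so every edge crossing $\pi$ lies in $E\setminus F$; by $k$-partition-connectivity applied to $\pi$ this gives $|E\setminus F|\ge k(|\pi|-1)$. Combined with a matching lower bound $r_M(F)\ge |V|-|\pi|$, the required inequality follows.

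The main obstacle is the subtle behaviour of $r_M(F)$ for hypergraphs: unlike the graphic case where the rank equals $|V|-c(V,F)$, a single hyperedge of size $s$ contributes only $1$ to the Lorea rank rather than $s-1$, so the naive identity $r_M(F)=|V|-c(V,F)$ fails in general. To repair this, one would either (a) derive the correct Lorea rank formula, which can be written as a minimum of $\sum_{X\in\mathcal{F}}(|X|-1)$ over set-families covering $F$ appropriately, and verify the exchange axiom using the standard submodular setup, or (b) bypass the rank formula entirely by refining $\pi$ further until each part contains a locally dense enough sub-hypergraph to realise $|P|-1$ matroid-independent edges, which is exactly where partition-connectivity provides the needed slack. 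Once the matroidal structure is in hand, the matroid-union deduction mirrors the classical Nash-Williams-Tutte proof for graphs, and the existence of the $k$ disjoint spanning hypertrees drops out.
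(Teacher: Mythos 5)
The paper does not prove this theorem; it is cited verbatim from the reference \cite{bib:nwh} (Frank, Kir\'aly and Kriesell), so there is no in-paper proof to compare against. I will therefore assess the proposal on its own merits.

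Your overall plan — direct counting for the easy direction, matroid union on the Lorea/hypergraphic matroid for the hard one — is the standard and correct route. However, both halves as written have gaps.

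In the easy direction, the claim that ``a connected hypergraph on $p$ super-vertices must have at least $p-1$ non-loop hyperedges'' is false: a single hyperedge containing all $p$ super-vertices already gives a connected hypergraph with one edge. The conclusion you want is nevertheless true, and the correct argument is the same counting you already use to establish connectivity of a spanning hypertree: the edges of $T_i$ that do \emph{not} cross $\pi=\{P_1,\dots,P_p\}$ number at most $\sum_j(|P_j|-1)=|V|-p$, so at least $(|V|-1)-(|V|-p)=p-1$ edges of $T_i$ cross $\pi$. The detour through contraction and the false lemma should simply be replaced by this count.

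In the hard direction, the step ``$r_M(F)\ge |V|-|\pi|$ where $\pi$ is the component partition of $(V,F)$'' is where the argument breaks, and you correctly flag that the naive graphic rank identity fails for the Lorea matroid. But you then leave the fix at the level of a plan. What is actually needed is the Lorea rank formula
\[
r_M(F)\;=\;\min_{\mathcal P}\bigl(e_F(\mathcal P)+|V|-|\mathcal P|\bigr),
\]
the minimum taken over partitions $\mathcal P$ of $V$, where $e_F(\mathcal P)$ counts edges of $F$ crossing $\mathcal P$. Fixing $F$ and choosing a partition $\mathcal P$ attaining this minimum (rather than the component partition of $F$), $k$-partition-connectivity gives $e_E(\mathcal P)\ge k(|\mathcal P|-1)$, and since every crossing edge not in $F$ lies in $E\setminus F$ we get $|E\setminus F|\ge k(|\mathcal P|-1)-e_F(\mathcal P)$. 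Adding $k\,r_M(F)=k\bigl(e_F(\mathcal P)+|V|-|\mathcal P|\bigr)$ yields $|E\setminus F|+k\,r_M(F)\ge k(|V|-1)+(k-1)e_F(\mathcal P)\ge k(|V|-1)$, which is exactly Edmonds' condition for $k$ disjoint bases, i.e.\ $k$ edge-disjoint spanning hypertrees. So the matroid-union approach does go through, but only once one works with the optimal partition from the rank formula; your version, which substitutes the component partition, would not close, and the proposal as written stops short of a proof.
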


Thus, every $1$-partition-connected hypergraph contains a spanning hypertree. By a result of Lov\'asz~\cite{bib:lovasz}, it is possible to shrink a spanning hypertree in such a way that the resulting graph is a spanning tree.


Let $H$ be a hypergraph. We call an ordered partition $(A,B)$ of the vertex set of $H$ into two parts a {\em cut} in $H$. We denote by $E(A,B)$ the set of hyperedges of $H$ incident with vertices in both $A$ and $B$ and call $|E(A,B)|$ the {\em order of the cut} $(A,B)$.
We define the {\em value} of the cut $(A,B)$ as $val(A,B)=\sum_{e\in E(A,B)}|e\cap A|$. Note that this definition is asymmetric, i.e., $val(A,B)$ and $val(B,A)$ are not equal in general. Next, we prove an analogue of Lemma~\ref{lem:cut} for hypergraphs.

\begin{lemma}\label{lem:hycut}
Let $k$ be an integer and $H$ be a hypergraph with edges of size at most three. Then there is a cut $(A,B)$ in $H$ of value at most $3k$ (and thus, of order at most $3k$) such that $H[A]$ is $k$-edge-connected or has only one vertex.
\end{lemma}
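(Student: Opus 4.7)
The plan is to imitate the proof of Lemma~\ref{lem:cut}, with ``order'' replaced by ``value,'' using the assumption $|e|\leq 3$ at the critical step.

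First, consider the collection $\mathcal{C}$ of cuts $(A,B)$ with $val(A,B)\leq 3k$ and $A$ nonempty; this is nonempty since the trivial cut $(V(H),\emptyset)$ has value $0$. Pick $(A,B)\in\mathcal{C}$ minimizing $|A|$. If $|A|=1$, or if $H[A]$ is $k$-edge-connected, we are done. Otherwise $|A|\geq 2$ and there is a partition $(A_1,A_2)$ of $A$ with both parts nonempty and $|E_{H[A]}(A_1,A_2)|\leq k-1$. We reach a contradiction by showing that one of $(A_1,V\setminus A_1)$ and $(A_2,V\setminus A_2)$ lies in $\mathcal{C}$: its first part is strictly smaller than $A$.

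To estimate $val(A_1,V\setminus A_1)+val(A_2,V\setminus A_2)$, we classify each hyperedge $e$ of $H$ according to the distribution of its vertices among $A_1$, $A_2$, $B$. Edges contained in a single one of these three sets contribute $0$. Edges meeting both $A_1$ and $A_2$ but not $B$ are exactly the crossing edges of $H[A]$ between $A_1$ and $A_2$, so there are at most $k-1$ of them, and each contributes $|e\cap A_1|+|e\cap A_2|=|e|\leq 3$; in total at most $3(k-1)$. Edges meeting only $A_1$ and $B$ contribute $|e\cap A_1|$ to $val(A_1,V\setminus A_1)$ and $0$ to $val(A_2,V\setminus A_2)$, i.e., exactly their contribution to $val(A,B)$; symmetrically for edges meeting only $A_2$ and $B$. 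Finally, any edge meeting all three of $A_1,A_2,B$ has size exactly $3$, so it contributes $1+1=2$ to the sum, which again equals its contribution to $val(A,B)$.

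Summing up, $val(A_1,V\setminus A_1)+val(A_2,V\setminus A_2)\leq 3(k-1)+val(A,B)\leq 3(k-1)+3k=6k-3$, so the smaller of the two values is at most $\lfloor(6k-3)/2\rfloor=3k-2<3k$, contradicting the minimality of $|A|$. The only nontrivial step is the case analysis on where each hyperedge sits relative to the three-part partition $(A_1,A_2,B)$, and here the bound $|e|\leq 3$ is essential — without it, type-$a$ edges (those fully inside $A$ and crossing $(A_1,A_2)$) could inflate the sum beyond $6k$; everything else follows the graph proof of Lemma~\ref{lem:cut}.
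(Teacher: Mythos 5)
Your proof is correct and follows essentially the same approach as the paper: take a cut $(A,B)$ of value at most $3k$ with $A$ minimal, and then, for any partition $(A_1,A_2)$ of $A$, use the identity $val(A_1,V\setminus A_1)+val(A_2,V\setminus A_2)=val(A,B)+\sum_{e\in E_{H[A]}(A_1,A_2)}|e|$ (which your case analysis on where each hyperedge sits establishes) together with $|e|\le 3$ and minimality to force the order of $(A_1,A_2)$ in $H[A]$ above $k$. The paper phrases this as a direct argument ($r>k$ for every internal cut) while you phrase it as a contradiction, but the content is identical.
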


\begin{proof}
If $H$ is $k$-edge-connected, we are done. Otherwise, we consider a cut $(A,B)$ with value at most $3k$ such that $A$ is inclusion-wise minimal. (Such a cut exists in $H$, because $val(A,B)\leq 2|E(A,B)|$ for every cut $(A,B)$.) Consider a cut $(A_1,A_2)$ in $H[A]$ and let $r$ be its order. We show that $r>k$. We have
\[ val(A_1,V\setminus A_1) + val(A_2,V\setminus A_2)=val(A,B)+\sum_{e\in E(A_1,A_2)}|e|.\]
Note that $val(A,B)\leq 3k$ and $\sum_{e\in E(A_1,A_2)}|e|\leq 3r$. Moreover, $val(A_1,V\setminus A_1), val(A_2,V\setminus A_2)>3k$ by minimality of $A$.  It follows that $\sum_{e\in E(A_1,A_2)}|e|>3k$ and thus, $r>k$. Since this is true for any cut in $H[A]$, $H[A]$ is $k$-edge-connected.
\end{proof}




Let $G$ be a hypergraph (or a multigraph). We denote the minimum number of hyperedges in a cut separating vertices $u$ and $v$ in $G$ by $\lambda_{G}(u,v)$.

{\em Splitting off} a pair of edges $su$, $sv$ in a multigraph $G$ is replacing these two edges by a new edge $uv$. We define splitting off a pair of hyperedges of size two in a hypergraph in the same way.

\begin{theorem}[Mader splitting off theorem~\cite{bib:mader}]\label{thm:mader}

Let $G=(V,E)$ be a connected multigraph and let $s$ be a vertex such that the degree of $s$ is not three and $s$ is not incident with bridges.
Then there are two edges $e,f$ incident with $s$ such that the multigraph $G_{e,f}$ obtained by splitting off $e$ and $f$ satisfies $\lambda_{G_{e,f}}(u,v)=\lambda_{G}(u,v)$ for any $u,v\in V\setminus \{s\}$.
\end{theorem}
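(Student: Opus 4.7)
The plan is to analyze the symmetric submodular cut function $d : 2^V \to \mathbb{N}$ defined by $d(X) = |E(X, V \setminus X)|$, which satisfies $d(X) + d(Y) \geq d(X \cap Y) + d(X \cup Y)$ for all $X, Y \subseteq V$. First I would make a bookkeeping observation: if one splits off $e = su$ and $f = sv$ into the new edge $uv$, then $d(X)$ drops by exactly $2$ when $\{u,v\} \subseteq X$ and $s \notin X$ (or symmetrically by complementation), and $d(X)$ is unchanged in every other case. Consequently, splitting $e, f$ preserves $\lambda_G(x,y)$ for all $x, y \in V \setminus \{s\}$ if and only if no set $X \subseteq V \setminus \{s\}$ with $\{u,v\} \subseteq X$ satisfies $d(X) \leq \lambda_G(x,y) + 1$ for some $x \in X$ and $y \in V \setminus (X \cup \{s\})$. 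Call such an $X$ \emph{dangerous} for the pair $(u,v)$; the theorem reduces to finding a pair $u,v$ among the neighbors of $s$ such that no dangerous set contains both.

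The heart of the argument is an uncrossing lemma for dangerous sets: if $X$ and $Y$ are dangerous and cross (i.e., $X \cap Y$, $X \setminus Y$, $Y \setminus X$ are all non-empty), then submodularity together with a careful relocation of the witness pairs $(x,y)$ forces one of $X \cap Y$ or $X \cup Y$ to be dangerous as well. Iterating the uncrossing lemma shows that, for any fixed neighbor $v_0$ of $s$, there is a unique inclusion-maximal dangerous set $M(v_0)$ containing $v_0$, and moreover every dangerous set meeting $v_0$ is contained in $M(v_0)$.

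The proof then proceeds by contradiction. Assume no splittable pair exists and fix any neighbor $v_0$ of $s$. For every other neighbor $w$, the pair $(v_0,w)$ is unsplittable, so some dangerous set contains both; by maximality, $w \in M(v_0)$. Hence every edge incident with $s$ enters $M(v_0)$, giving $d(M(v_0)) \geq \deg(s)$. Combining this with the defining inequality $d(M(v_0)) \leq \lambda_G(x,y) + 1$ for the tight witness pair of $M(v_0)$, and repeating the analysis on dangerous sets internal to $M(v_0)$, one deduces that the only way to avoid a contradiction is $\deg(s) \leq 3$. The no-bridge hypothesis at $s$ rules out the degenerate configurations arising at $\deg(s) \leq 2$ (where some edge at $s$ would have to be a bridge), leaving only the excluded case $\deg(s) = 3$.

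The main obstacle is the uncrossing lemma itself: the witness vertices $(x,y)$ depend on the dangerous set, and when $X$ and $Y$ cross their witnesses can fall in any of the four regions $X \cap Y$, $X \setminus Y$, $Y \setminus X$, $V \setminus (X \cup Y)$, so one needs a case analysis to produce valid witnesses for $X \cap Y$ or $X \cup Y$. A cleaner route is to fix a target pair $(x^*, y^*)$ of vertices and work with the posistructure of \emph{$(x^*,y^*)$-tight} sets (those with $d(X) \leq \lambda_G(x^*,y^*) + 1$, $x^* \in X$, $y^* \notin X$, $s \notin X$) for which uncrossing is cleaner, and then take a union over the finitely many relevant pairs. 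Once the uncrossing structure is in place, the concluding cut-counting at $M(v_0)$ is direct, and the roles of the hypotheses $\deg(s) \neq 3$ and ``no bridge at $s$'' become transparent.
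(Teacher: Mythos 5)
The paper does not prove this theorem: it cites Mader's splitting-off theorem as a known black-box result from the reference \cite{bib:mader} and only proves the hypergraph Corollary~\ref{cor:mader} that follows it. So there is no in-paper argument to compare your proposal against.

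As an outline of the classical proof, your sketch correctly identifies the main ingredients (the cut function's submodularity, the characterization of admissible splits via ``dangerous'' sets, an uncrossing argument, a final count against $\deg(s)$), but a key step as stated is incorrect. The claim that for each neighbor $v_0$ of $s$ ``there is a unique inclusion-maximal dangerous set $M(v_0)$ containing $v_0$, and moreover every dangerous set meeting $v_0$ is contained in $M(v_0)$'' is false. Uncrossing two crossing dangerous sets $X,Y$ via $d(X)+d(Y)\geq d(X\cap Y)+d(X\cup Y)$ does not automatically yield danger of $X\cap Y$ or $X\cup Y$: a witness pair $(x,y)$ with the required inclusion pattern may simply not exist (in particular, $X\cup Y$ can equal $V\setminus\{s\}$, in which case it has no outside witness and is not dangerous at all, yet neither $X\subseteq Y$ nor $Y\subseteq X$). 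The standard arguments (Mader~1978, or Frank's presentations) do \emph{not} establish uniqueness; they show there are at most two or three \emph{maximal} dangerous sets meeting the neighborhood of $s$, handle the non-uncrossable case $X\cup Y=V\setminus\{s\}$ via the alternative posimodular inequality $d(X)+d(Y)\geq d(X\setminus Y)+d(Y\setminus X)$, and then run a degree-counting argument over those few maximal sets together with the no-bridge hypothesis. So your proposal is a plausible road map, but the uncrossing section would need to be reworked to track these exceptional cases; as written, the ``iterating the uncrossing lemma'' step does not go through.
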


\begin{corollary}\label{cor:mader}
Let $H$ be a connected hypergraph with at least two vertices and hyperedges of size at most three and let $s$ be a vertex that is not contained in any hyperedge of size three and not incident with bridges. Then there are two hyperedges $e,f$ of size two incident with $s$ such that the hypergraph $H_{e,f}$ obtained from $H$ by splitting off $e$ and $f$ satisfies $\lambda_{H_{e,f}}(u,v)=\lambda_{H}(u,v)$ for any $u,v\in V\setminus \{s\}$.
\end{corollary}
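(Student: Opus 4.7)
The plan is to reduce the hypergraph splitting problem to the multigraph splitting theorem (Theorem~\ref{thm:mader}) by replacing size-three hyperedges with auxiliary stars. Concretely, I would build a multigraph $G'$ from $H$ as follows: for each hyperedge $e = \{a, b, c\}$ of size three, introduce a new auxiliary vertex $v_e$ and three edges $v_e a$, $v_e b$, $v_e c$; keep every size-two hyperedge as an edge (a loop if both endpoints coincide). Since $s$ lies in no size-three hyperedge by hypothesis, the edges of $G'$ incident with $s$ biject with the size-two hyperedges of $H$ incident with $s$, and in particular $\deg_{G'}(s) = \deg_H(s)$.

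The key step would be proving a min-cut correspondence: $\lambda_{G'}(u, w) = \lambda_H(u, w)$ for every pair $u, w \in V(H)$. For the inequality $\lambda_{G'} \leq \lambda_H$, I would take a minimum hypergraph cut $(A, B)$ separating $u, w$ in $H$ and lift it to $G'$ by placing each auxiliary vertex $v_e$ on the side containing at least two of the three vertices of its hyperedge; then a size-three hyperedge that is not monochromatic contributes exactly one star-edge to the multigraph cut (the unique minority one), matching its contribution to the hypergraph cut. For the reverse inequality, in any minimum $uw$-cut of $G'$ I may assume each auxiliary vertex $v_e$ sits on the majority side of its neighbours (otherwise flipping it only decreases the cut), and projecting onto $V(H)$ yields a hypergraph cut of the same size. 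Applying exactly the same argument to $H$ after removing any single size-two hyperedge incident with $s$ shows that $s$ is incident with a bridge of $G'$ if and only if it is incident with a bridge of $H$; so the bridge-freeness assumption transfers.

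With these translations in hand, the remainder is routine. I would apply Mader's theorem (Theorem~\ref{thm:mader}) to the connected multigraph $G'$ at the vertex $s$ (of the same degree as in $H$, hence not equal to three, and not incident with bridges), obtaining two edges $e = sv_1$, $f = sv_2$ whose splitting preserves $\lambda_{G'}(u, w)$ for all $u, w \neq s$. Because $s$ is in no size-three hyperedge, $e$ and $f$ correspond to size-two hyperedges of $H$, and applying the star-construction to the hypergraph $H_{e,f}$ yields exactly $(G')_{e,f}$. By the min-cut correspondence applied to both $H$ and $H_{e,f}$, one concludes $\lambda_{H_{e,f}}(u, w) = \lambda_{(G')_{e,f}}(u, w) = \lambda_{G'}(u, w) = \lambda_H(u, w)$ for all $u, w \in V \setminus \{s\}$. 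The one place where real work is needed is the min-cut correspondence, and in particular the majority-side argument; once that is formalized, every remaining claim is essentially a bookkeeping check that the star-construction commutes with splitting off a pair of edges at $s$.
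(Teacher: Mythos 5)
Your proposal takes exactly the same route as the paper: the paper defines an operation $\tau$ that replaces each size-three hyperedge by an auxiliary vertex joined to its three endpoints (your ``star''), states as an unproved observation that $\lambda_{G}(u,v)=\lambda_{\tau(G)}(u,v)$, deduces that bridge-freeness at $s$ transfers, applies Theorem~\ref{thm:mader} to $\tau(H)$, and uses the isomorphism $\tau(H)_{e,f}\cong\tau(H_{e,f})$ to conclude. Your version is correct and simply supplies the majority-side argument for the min-cut correspondence and the resulting bridge transfer, which the paper leaves implicit as observations.
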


\begin{proof}
Let $G$ be a hypergraph with hyperedges of size at most three.
Let $\tau$ be the operation creating a multigraph $\tau(G)$ from $G$ by replacing each hyperedge $h$ of size $3$ containing vertices $u_1,u_2$ and $u_3$ by a vertex $h$ adjacent to $u_1$, $u_2$ and $u_3$ (with possible multiplicities if $u_i=u_j$ for some $i\not= j$).
Observe that $\lambda_{G}(u,v)=\lambda_{\tau(G)}(u,v)$ for every $u,v\in V(G)$. In particular, if $s$ is not incident with a bridge in $G$, it is not incident with a bridge in $\tau(G)$. 

Therefore, we can apply Theorem~\ref{thm:mader} to $\tau(H)$ and the vertex $s$. Let $e,f$ be a pair of edges incident with $s$ in $\tau(H)$ as in Theorem~\ref{thm:mader} and let $\tau(H)_{e,f}$ be the multigraph obtained from $\tau(H)$ by splitting off $e$ and $f$. Then, $\lambda_{\tau(H)_{e,f}} (u,v)=\lambda_{\tau(H)}(u,v)$ for every $u,v\in V(\tau(H))\setminus \{s\}$.

Note that since $s$ is not contained in any hyperedge of size three in $H$, $e$ and $f$ are hyperedges of size two in $H$. Moreover, $\tau(H)_{e,f}$ is isomorphic to $\tau(H_{e,f})$. It follows that $\lambda_{H_{e,f}}(u,v)=\lambda_{H}(u,v)$ for any $u,v\in V(H)\setminus \{s\}$.  
\end{proof}

Let $G$ be a complex graph without stubs. We say that a spanning subgraph $H$ of $G$ is an {\em $\varepsilon$-fraction of $G$} if it satisfies $|\deg_H v- \varepsilon\deg_G v|\le 2$ for every vertex $v$ of $G$. 
The following lemma shows that there exists an $\varepsilon$-fraction of $G$ for any  $\varepsilon=1/2^c$, $c\in \NN$. 

\begin{lemma} \label{lem:fraction}
Let $m_1, \ldots, m_k$ be such that $\sum_{i\in [k]} 2^{-m_i}\leq 1$ and let $G$ be a complex graph without stubs. Then, there exist edge-disjoint subgraphs $G_1,\ldots,G_k$ of $G$ such that $G_i$ is a $2^{-m_i}$-fraction of $G$ for every $i\in [k]$.
\end{lemma}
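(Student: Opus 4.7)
The plan is iterated halving along a Kraft-type binary tree. The key ingredient is a \emph{halving sublemma}: every complex graph $G$ without stubs admits edge-disjoint spanning subgraphs $H_1,H_2$ with $E(H_1)\cup E(H_2)=E(G)$ and $|\deg_{H_i}v-\tfrac12\deg_Gv|\le 1$ at every vertex $v$ and every $i\in\{1,2\}$. Given this sublemma, the hypothesis $\sum_i 2^{-m_i}\le 1$ lets me invoke Kraft's inequality to produce a rooted binary tree with $k$ distinguished leaves at prescribed depths $m_1,\ldots,m_k$ (plus possibly other, unused leaves). I then attach $G$ to the root and, processing internal nodes top-down, apply the halving sublemma to the subgraph sitting at each internal node and send the two halves to its two children. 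Defining $G_i$ to be the subgraph at the distinguished leaf of depth $m_i$ produces $k$ pairwise edge-disjoint spanning subgraphs of $G$.

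The error analysis then drops out of a one-line recurrence. Let $e_d$ be a uniform upper bound on $|\deg_H v-2^{-d}\deg_Gv|$ over all subgraphs $H$ sitting at nodes of depth $d$. Then $e_0=0$, and applying the halving sublemma to a subgraph at depth $d$ yields $e_{d+1}\le e_d/2+1$. Solving gives $e_d=2-2^{1-d}<2$ for every $d\ge 0$, so each $G_i$ satisfies $|\deg_{G_i}v-2^{-m_i}\deg_Gv|<2$, which is the required bound.

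For the halving sublemma I work with the underlying multigraph $M$, treating loops and non-loops separately. For the non-loop submultigraph $M'$ I add a matching on its odd-degree vertices (there are evenly many) to make every degree even, find an Eulerian circuit in each connected component, alternately $2$-colour the edges along each circuit, and finally discard the matching. At the loops of each vertex $v$ I simply split them as $\lceil L_v/2\rceil$ against $\lfloor L_v/2\rfloor$ between the two sides. Each procedure on its own produces degree error at most $1$ per vertex, with the loop error $\pm 1$ at every vertex with odd $L_v$ (sign freely chosen) and the non-loop error $\pm 1$ concentrated at the starting vertex of any odd-length Eulerian circuit (position also freely chosen). The freedom in loop-signs and in choices of Eulerian starting vertices and of matching-edge colours is used to cancel the two contributions whenever they coincide at the same vertex, keeping the total error $\le 1$ at every vertex.

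The delicate point is precisely this halving sublemma: loops contribute $2$ to the degree and are indivisible, and an Eulerian circuit of odd length forces a $\pm 2$ degree-imbalance defect at one vertex per component. If these two obstructions were taken naively, halving would only give error $2$, whence the recurrence would degrade to $e_{d+1}\le e_d/2+2$ and only guarantee $e_d<4$, violating the error budget of $2$ built into the definition of an $\varepsilon$-fraction. Driving the halving error down to $1$ per vertex via the coordination described above is therefore the main technical content of the proof.
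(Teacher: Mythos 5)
Your overall strategy is the paper's: iterated halving along a Kraft-type prefix-free binary tree, with the same error recurrence $e_{d+1}\le e_d/2+1$ solving to $e_d<2$. The difference is entirely in the halving sublemma, and there your version introduces a complication the paper does not have, and your resolution of it is not actually proved.

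You pull loops out of the multigraph and split them $\lceil L_v/2\rceil$ versus $\lfloor L_v/2\rfloor$ (error $\pm1$ at each vertex with odd $L_v$), then handle the non-loop part by adding a matching on its odd-degree vertices, two-colouring Eulerian circuits, and discarding the matching. But the non-loop error is not, as you state, concentrated only at the circuit start: discarding each matching edge also shifts the split by $\pm\tfrac12$ at both its endpoints. If the start vertex $v_0$ of an odd-length circuit is itself a matching vertex and its matching edge receives the colour opposite to that of the wrap-around pair $e_1,e_m$, then the non-loop error at $v_0$ alone is already $\tfrac32$, before any loop contribution. So the ``coordination'' has to do more than oppose loop signs against circuit-start signs; one needs, for instance, to begin the Eulerian circuit along the matching edge at $v_0$, which pins that edge to the wrap-around colour and drops the non-loop error at $v_0$ to $\pm\tfrac12$. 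You gesture at ``choices of matching-edge colours'' as a freedom, but you neither identify this fix nor verify that the three error sources (loop, circuit start, matching discard) can always be made to add up to at most $1$ at every vertex simultaneously.

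The paper's halving avoids all of this. It adds a single auxiliary vertex $s$ joined to every odd-degree vertex (rather than a matching among them), takes an Eulerian tour of the entire multigraph --- loops included --- in each component, starting from $s$ when $s$ is present, two-colours alternately, and deletes $s$. Because a loop is a single step of the tour, the alternating colouring automatically balances the consecutive-edge pairings at every internal vertex; loops create no defect at all. Because the tour starts at $s$ whenever possible, the odd-length wrap-around defect either lands on $s$ and is discarded, or lands on an arbitrary start vertex of an $s$-free component and contributes exactly $\pm1$. Deleting the (at most one) $s$-edge at a vertex contributes $\pm\tfrac12$ and never co-occurs with the wrap-around defect. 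So every vertex is within $1$ of exact halving with no case analysis and no coordination. The ``$\pm2$ loop obstacle'' you present as the main technical content is an artifact of having extracted the loops from the Eulerian tour in the first place; keeping them in dissolves it.
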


\begin{proof}

Let $m=\max_{i\in [k]} m_i$ 
and let $H$ be the underlying multigraph of $G$. 

Let $\bar{H}$ be the multigraph obtained from $H$ by adding an auxiliary vertex $s$ adjacent to all vertices of odd degree in $H$. Since the number of such vertices is even, all vertices of $\bar{H}$ have even degree. 
Consider an Eulerian tour in each connected component of $\bar{H}$ and color its edges by two colors, $0$ and $1$, in an alternating way, starting from $s$ if the connected component contains $s$.

Let $H_0$ be the subgraph of $H$ of color $0$ and $H_1$ the subgraph of $H$ of color $1$. We apply the procedure described above for $H_0$ and $H_1$ to obtain graphs $H_{00}$, $H_{01}$, $H_{10}$ and $H_{11}$ and repeat it recursively until we obtain a collection $\{H_Q\}_{Q\in \{0,1\}^m}$ of edge-disjoint spanning subgraphs of $H$. 

 Let $v$ be a vertex of $H$ and let $d$ be its degree. Observe that $\deg_{H_0} v,\deg_{H_1} v \in [d/2-1,d/2+1]$ and this is true for every iteration of the procedure. Thus, 
  \[\deg_{H_Q} v\leq \underbrace{(\ldots (}_{i \times} d/\underbrace{2 +1)/2 +1)/2 \ldots )/2+1}_{i \times}\leq d/2^i+\sum_{j=0}^{i-1} 1/2^{j}\leq d/2^i +2\]
for every $H_{Q}$ with $Q\in \{0,1\}^{i}$.
 Analogously, $\deg_{H_Q} v \geq d/2^i-2$ for every $H_{Q}$ with $Q\in \{0,1\}^{i}$.

We choose $Q_i\in \{0,1\}^{m_i}$ for every $i\in [k]$ so that the graphs $H_{Q_i}$ are pairwise edge-disjoint. We let $G_i$ be the subgraph of $G$ with underlying multigraph $H_{Q_i}$ for every $i\in [k]$.  
Consequently, all $G_i$'s are edge-disjoint $2^{-m_i}$-fractions of $G$.  

\end{proof}

We conclude the section by stating the Asymmetric Lov\'asz Local Lemma, which we extensively use in the following sections.

\begin{lemma}[Lov\'asz Local Lemma (asymmetric version)~\cite{bib:lll}]\label{LLL}
Let ${\mathcal {A}}=\{A_{1},\ldots, A_{n}\}$ be a finite set of events in a probability space $\Omega$. For each $A\in {\mathcal {A}}$, we let $\Gamma (A)$ denote a subset of ${\mathcal {A}}$ such that $A$ is independent from the collection of events ${\mathcal {A}}\setminus (\{A\}\cup \Gamma (A))$. If there exists an assignment of reals $x:{\mathcal {A}}\to (0,1)$ to the events such that  for every $A\in {\mathcal {A}}$:
\[\Pr(A)\leq x(A)\prod_{B\in \Gamma (A)}(1-x(B)),\]
then the probability of avoiding all events in ${\mathcal {A}}$ is positive, in particular

    \[\Pr \left(\bigwedge_{A\in {\mathcal {A}}}{\overline {A}}\right)\geq \prod \nolimits _{A\in {\mathcal {A}}}(1-x(A)).\] 
\end{lemma}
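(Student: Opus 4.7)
The plan is to prove by induction the stronger statement
$$\Pr\!\left(A \;\Big|\; \bigwedge_{B \in S} \overline{B}\right) \leq x(A)$$
for every $A \in \mathcal{A}$ and every $S \subseteq \mathcal{A}\setminus\{A\}$ (with the convention that conditioning on an empty event is unconditional). Once this is established, the conclusion of the lemma follows from the chain rule of conditional probability: enumerating $\mathcal{A}=\{A_1,\dots,A_n\}$ in any order gives
$$\Pr\!\left(\bigwedge_{i=1}^{n}\overline{A_i}\right) = \prod_{i=1}^{n}\Pr\!\left(\overline{A_i} \;\Big|\; \bigwedge_{j<i}\overline{A_j}\right) \geq \prod_{i=1}^{n}\bigl(1-x(A_i)\bigr) > 0,$$
which is strictly positive because each $x(A_i)\in(0,1)$.

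The induction is on $|S|$. For the base case $|S|=0$ we use the hypothesis $\Pr(A) \leq x(A)\prod_{B\in\Gamma(A)}(1-x(B)) \leq x(A)$. For the inductive step, partition $S = S_1 \cup S_2$ with $S_1 = S\cap\Gamma(A)$ and $S_2 = S\setminus\Gamma(A)$, and apply Bayes' identity
$$\Pr\!\left(A \;\Big|\; \bigwedge_{B\in S_1}\overline{B}\wedge\bigwedge_{C\in S_2}\overline{C}\right) = \frac{\Pr\!\left(A\wedge \bigwedge_{B\in S_1}\overline{B} \;\Big|\; \bigwedge_{C\in S_2}\overline{C}\right)}{\Pr\!\left(\bigwedge_{B\in S_1}\overline{B} \;\Big|\; \bigwedge_{C\in S_2}\overline{C}\right)}.$$

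To bound the numerator I drop the $\overline{B}$'s and use the fact that $A$ is independent of the events in $\mathcal{A}\setminus(\{A\}\cup\Gamma(A))$ (and $S_2$ lies in that set), so the numerator is at most $\Pr(A) \leq x(A)\prod_{B\in\Gamma(A)}(1-x(B))$ by the hypothesis of the lemma. To bound the denominator from below, I enumerate $S_1=\{B_1,\dots,B_k\}$ and apply the chain rule again:
$$\Pr\!\left(\bigwedge_{i=1}^{k}\overline{B_i} \;\Big|\; \bigwedge_{C\in S_2}\overline{C}\right) = \prod_{i=1}^{k}\Pr\!\left(\overline{B_i} \;\Big|\; \bigwedge_{j<i}\overline{B_j}\wedge\bigwedge_{C\in S_2}\overline{C}\right).$$
Each factor is of the form $1-\Pr(B_i\mid\cdots)$, where the conditioning is on a strictly smaller set than $S$, so the inductive hypothesis applied to $B_i$ gives $\Pr(B_i\mid\cdots)\leq x(B_i)$ and hence the denominator is at least $\prod_{B\in S_1}(1-x(B)) \geq \prod_{B\in\Gamma(A)}(1-x(B))$. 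Dividing cancels the product and leaves $x(A)$, completing the induction.

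The delicate point — and the one to be careful about — is the independence invocation in the numerator: one must verify that $A$ is independent not only of $S_2$ individually but of the conjunction $\bigwedge_{C\in S_2}\overline{C}$, which is immediate from mutual independence of $A$ from the $\sigma$-algebra generated by $\mathcal{A}\setminus(\{A\}\cup\Gamma(A))$. A secondary issue is making sure all conditional probabilities are well-defined, which follows from the inductive lower bounds themselves (each denominator is strictly positive since $x(B)<1$). Beyond this, the argument is essentially bookkeeping with Bayes' rule and the chain rule.
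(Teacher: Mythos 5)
The paper does not prove this lemma---it is stated as a citation to the literature---so there is no in-paper proof to compare against. Your proof is the standard textbook proof of the asymmetric Lov\'asz Local Lemma (induction on $|S|$ to establish $\Pr(A \mid \bigwedge_{B\in S}\overline{B}) \le x(A)$, then the chain rule), and it is correct. The two places you flag as delicate are handled correctly: the numerator step does require that ``$A$ is independent from the collection $\mathcal{A}\setminus(\{A\}\cup\Gamma(A))$'' be read as mutual independence (so that $A$ is independent of the conjunction $\bigwedge_{C\in S_2}\overline{C}$, not merely of each $C$ individually), which is indeed the standard reading; and the positivity of all conditioning events can be folded into the same induction by simultaneously proving $\Pr(\bigwedge_{B\in S}\overline{B}) \ge \prod_{B\in S}(1-x(B)) > 0$. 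You might also note explicitly that when $S_1 = \emptyset$ the Bayes decomposition degenerates and the bound follows directly from independence, and that the denominator inequality $\prod_{B\in S_1}(1-x(B)) \ge \prod_{B\in\Gamma(A)}(1-x(B))$ uses $S_1 \subseteq \Gamma(A)$ together with $0 < 1-x(B) < 1$. These are minor presentational points; the substance is right.
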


\section{Shrinking the hypergraph}\label{sec:shrink}
In this section, we study shrinking of hypergraphs and complex graphs. In~\cite{bib:lovasz}, Lov\'asz showed that one can shrink any hypertree to a tree. We show that if a hypertree has hyperedges of size at most three, it is possible to shrink it to a tree in such a way that the degree of each vertex in the resulting tree is at least a fraction of $1/100$ of its degree in the original hypertree. Then, we use this result to shrink a complex hypergraph in such a way that the resulting complex graph is highly connected and of high edge-degree.

\begin{lemma}\label{lem:simple-shrink}
Let $H$ be a hypertree with hyperedges of size at most three. It is possible to shrink all hyperedges of size three so that the resulting graph is a spanning tree $T'$ such that $\deg_{T'} v\geq \deg_H v/100$ for every $v\in V(H)$.
\end{lemma}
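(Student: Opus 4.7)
I propose to construct the desired shrinking by the entropy compression method advertised in the introduction of Section~\ref{sec:shrink}. Reformulating, a shrinking of $H$ amounts to choosing, for each size-3 hyperedge $h$, a vertex $\phi(h) \in h$ to drop: $h$ is then replaced by the ordinary edge $h \setminus \{\phi(h)\}$. Writing $T_\phi$ for the resulting graph, one has $\deg_{T_\phi} v = \deg_H v - |\phi^{-1}(v)|$, so the target degree bound becomes $|\phi^{-1}(v)| \leq (99/100)\deg_H v$ for every vertex $v$, subject to $T_\phi$ being a spanning tree. By Lov\'asz's shrinking result~\cite{bib:lovasz} invoked at the top of the section, at least one $\phi_0$ with $T_{\phi_0}$ a spanning tree exists, providing an initial feasible point.

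The primitive correction move is a single-hyperedge swap at $h = \{u,v,w\}$ with $\phi(h)=v$: changing $\phi(h)$ to $u$ deletes the tree edge $\{u,w\}$ from $T_\phi$ and inserts $\{v,w\}$. The result is again a spanning tree precisely when the unique $T_\phi$-path from $u$ to $w$ contains $v$; if so, the drop count of $v$ decreases by one and that of $u$ increases by one. I then run a Moser--Tardos-style randomized local search: while some vertex $v$ is over-dropped, choose uniformly at random a hyperedge $h \in \phi^{-1}(v)$ and a target $u \in h \setminus \{v\}$, and perform the swap if legal; otherwise record the failure and resample.

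The entropy compression analysis records at each iteration a short log entry (the over-dropped vertex, the sampled pair $(h,u)$, a legality flag), and verifies that from the terminal $\phi$ together with this log, both $\phi_0$ and the bit-by-bit random history are uniquely reconstructible. The standard information-theoretic inequality then bounds the expected number of iterations polynomially in $|V|$, so the procedure almost surely terminates with $|\phi^{-1}(v)| \leq (99/100)\deg_H v$ for every $v$ while keeping $T_\phi$ a spanning tree, yielding the required shrinking.

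The main obstacle is guaranteeing a constant success probability per iteration. A single random swap may be illegal whenever the $T_\phi$-path from $u$ to $w$ misses $v$, and in degenerate configurations every one-step move at an over-dropped $v$ could fail. To handle this, I expect to allow short sequences of consecutive swaps along a $T_\phi$-path emanating from $v$, and to argue that many such sequences are available by combining the hypertree's partition-connectivity (Observation~\ref{obs:part-con} and Theorem~\ref{thm:nwh}) with the over-drop hypothesis, which forces $v$ to lie on many tree paths. Controlling the compression bound with these multi-step moves—so that the log still grows strictly slower than the random bits consumed—is the technical crux, and where the loose factor $1/100$ (rather than a sharper constant) enters.
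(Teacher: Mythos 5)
Your high-level idea — encode a shrinking as a choice function $\phi$, fix over-dropped vertices by local flips, and show termination by entropy compression — is the same strategy the paper uses. But the proposal as written has a genuine gap in the compression step, and it stems from the granularity of the local move.

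In your version, each iteration consumes roughly $\log|\phi^{-1}(v)|+1$ random bits (choosing $h$ and the target vertex), while the log entry records "the over-dropped vertex, the sampled pair $(h,u)$, a legality flag". That log already contains the sampled pair verbatim, so it is at least as long as the random bits it is supposed to compress; adding the vertex name only makes it longer. Entropy compression gives termination only when the terminal state plus log is \emph{strictly shorter} than the random history plus initial state, and your bookkeeping never achieves that. You flag "controlling the compression bound" as the crux, but the single-hyperedge move leaves no room for a win: there is nothing about a one-step swap that is compressible, and multi-step moves along a tree path do not obviously help either. (A smaller point: the legality condition you state is off — for $h=\{u,v,w\}$ with $\phi(h)=v$ and $\{u,w\}\in T_\phi$, the path from $u$ to $w$ in $T_\phi$ is just the edge $\{u,w\}$, so it never contains $v$. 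The correct condition is that the $T_\phi$-path from $v$ to $w$ passes through $u$; moreover, of the two candidate retargetings of $\phi(h)$, exactly one is always legal, so there is no need for a "resample on failure" step.)

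The paper closes the gap with two ideas you are missing. First, the local move is \emph{batched}: the procedure $\Fix(v)$ visits, in a single call, every hyperedge $h=uvw$ currently shrunk to $uw$ and flips it toward $v$ independently with one random bit each. Because $v$ is poor, $\deg_T v<\deg_H v/100$, so at least $99\deg_H v/100$ such hyperedges exist, and the call consumes at least $99\deg_H v/100$ random bits. Second, poorness is exactly what makes the log compressible: the only state information recorded for the call is the \emph{old} tree-neighborhood of $v$, which, being a subset of size less than $\deg_H v/100$ out of $\deg_H v$ candidates, can be written down using run-length encoding and Lemma~\ref{lem:technical} in about $2\deg_H v/7$ bits. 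Recursive calls to $\Fix$ on poor neighbors are recorded as index deltas, again compressible by Lemma~\ref{lem:technical}, with a fixed "charge" of $\ec$ symbols per call absorbed into the budget. The net result is a log of roughly $4\deg_H v/7+O(1)$ bits against $99\deg_H v/100$ random bits, which is where the constant $100$ (and the need for $\deg_H v\ge 100$ at poor vertices) comes from. Without batching, there are no "many random bits per call"; without exploiting poorness of the \emph{tree} degree, there is no compressible object to record. Both are essential and both are absent from your proposal.

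One more structural remark: the paper also needs the commutativity claim (that the outcome of flipping a fixed hyperedge toward a fixed vertex is unaffected by other flips toward that vertex, once one fixes an orientation of $T$ toward that vertex). This makes the reverse reconstruction from the terminal tree and the log well-defined. Your one-at-a-time scheme would need an analogue of this to make the "uniquely reconstructible" step go through even if the counting issue were fixed.
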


We can derive the following corollary for complex hypergraphs. It is enough to apply  Lemma~\ref{lem:simple-shrink} to the underlying hypergraph and perform analogous shrinking on the complex hypergraph.

\begin{corollary}\label{cor:simple-shrink}
Let $G$ be a complex hypergraph such that its underlying hypergraph is a hypertree. It is possible to shrink all hyperedges so that the resulting complex graph $G'$ is connected (in particular, its edges form a spanning tree) and satisfies $\edeg_{G'} v\geq \deg_G v/100$ for every $v\in V(G)$.
\end{corollary}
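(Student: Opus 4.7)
I would prove this by a probabilistic argument — either via the asymmetric Lovász Local Lemma (Lemma~\ref{LLL}) or, equivalently, the entropy-compression method hinted at in the paper. The random experiment is to independently, for each size-$3$ hyperedge $e$ of $H$, pick uniformly at random one of the three pairs of vertices of $e$ and retain only that pair as a size-$2$ edge; size-$2$ hyperedges are left unchanged. Call the result $T'$. Since $|E(T')| = |V(H)|-1$ by construction, $T'$ is a spanning tree precisely when it is acyclic and has no isolated vertex.

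Two families of bad events arise. The \emph{low-degree} events $B_v$: the event $\deg_{T'} v < \deg_H v / 100$. Since each size-$3$ hyperedge incident to $v$ retains $v$ with probability $2/3$, we have $\mathbb{E}[\deg_{T'} v] \geq \tfrac{2}{3}\deg_H v$, so a Chernoff bound yields $\Pr[B_v] \leq \exp(-c\deg_H v)$ whenever $\deg_H v$ is large. For $\deg_H v \leq 100$, the requirement reduces to $\deg_{T'} v \geq 1$, which is also a small local event. In either case, $B_v$ depends only on shrinking choices at size-$3$ hyperedges incident to $v$, so its dependency neighborhood scales with $\deg_H v$. The \emph{cycle} events $D_C$: for each potential cycle $C$ of length $k$ in $T'$ using $j$ shrunk size-$3$ hyperedges, $\Pr[D_C] = 3^{-j}$, since each of those $j$ hyperedges must shrink to a specific pair (the remaining $k-j$ hyperedges are size-$2$ and present automatically).

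The main obstacle is controlling the cycle events: cycles are global and of a priori unbounded length, so a naïve application of LLL could fail. The key leverage is the hypertree structure of $H$. If a cycle $C$ of length $k$ uses $j$ size-$3$ hyperedges, then the $k$ hyperedges of $H$ underlying $C$ span at most $k+j$ distinct vertices (the $k$ cycle vertices plus the $j$ discarded ones), and the hypertree inequality $|E(H[X])| \leq |X|-1$ applied to this set both forces $j \geq 1$ and, more importantly, controls how many such hyperedge configurations can occur locally around a given hyperedge. Enumerating potential cycles by their hyperedge support in $H$ and weighting each by $3^{-j}$ keeps $\sum_C \Pr[D_C]$ small enough for the LLL inequality $\Pr[A] \leq x(A)\prod_{B}(1-x(B))$ to hold for a suitable choice of weights, once combined with the low-degree bounds above. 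The asymmetric LLL then gives positive probability that no bad event occurs, producing the desired shrinking. The entropy-compression viewpoint phrases the same argument as a randomized greedy algorithm that re-samples shrinking choices whenever a local witness of failure (a short cycle or a low-degree vertex) appears, and terminates because such witnesses can be recorded too compactly to account for the consumed random bits.
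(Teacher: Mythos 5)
There is a genuine gap, and the gap is in exactly the place where your approach diverges from the paper's.

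The paper proves Corollary~\ref{cor:simple-shrink} by reducing it to Lemma~\ref{lem:simple-shrink} on the underlying hypergraph, and the proof of that lemma never has to worry about acyclicity at all. It first invokes Lov\'asz's theorem to obtain \emph{some} shrinking $T$ that is a spanning tree, then defines a \emph{flip} operation on a hyperedge $h=uvw$ shrunk to $uv$: replace $uv$ by $uw$ or $vw$, and notes that exactly one of the two choices again yields a spanning tree. The entropy-compression / local-resampling procedure $\Fix(v)$ only ever flips, so it walks within the set of spanning trees, and the only bad events one has to kill are the low-degree events for poor vertices. Acyclicity is preserved for free at every step.

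Your plan throws away that structural shortcut and attempts a one-shot independent random shrinking, so it has to control low-degree events \emph{and} cycle events simultaneously. The low-degree part is fine (each $v$ survives with probability $2/3$ per incident size-$3$ hyperedge, so Chernoff gives the exponential bound). But the cycle part is where the argument breaks. The hypertree inequality $|E(H[X])|\leq|X|-1$, applied to the vertex support $X$ of a candidate cycle $C$ of length $k$ using $j$ size-$3$ hyperedges, gives only $j\geq 1$, which bounds $\Pr[D_C]$ by $1/3$ — not by anything decaying in $k$. Size-$2$ hyperedges form a forest, so any cycle is a concatenation of forest-paths glued by shrunk size-$3$ edges, and for fixed $j$ the number of such candidate cycles sharing a given hyperedge can be large (and cycles can be arbitrarily long). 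Already the extreme case $j=2$ (two size-$3$ hyperedges shrinking to the same pair, i.e. a length-$2$ multi-edge) has probability $1/9$ and can occur linearly many times in $|V|$; the union-bound form of your final sentence manifestly fails, and the asymmetric-LLL form requires a precise accounting of how many $D_C$ events with a given $j$ touch a given hyperedge, which you have not supplied and which the hypertree inequality alone does not give. In short, the sentence ``keeps $\sum_C\Pr[D_C]$ small enough for the LLL inequality to hold'' is asserting the hard part, not proving it. A further minor point: even if the cycle control were established, you would still need to route the argument through the underlying hypergraph and then translate the chosen shrinking back to the complex hypergraph (keeping the wiggly edge/stub split aligned with the chosen pair), exactly as the paper does; your plan talks only about the underlying hypertree.

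Concretely, the missing idea is the paper's starting-point-plus-flip scheme: begin from a guaranteed tree and move only along tree-preserving flips, so that acyclicity never needs to be argued probabilistically. Without something like that, the cycle events are the dominant difficulty and your sketch does not address them.
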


In the proof of Lemma~\ref{lem:simple-shrink}, we use the following technical lemma for bounding the number of digits of numbers with a given sum. In the lemma and throughout the paper we use $\log$ to denote  logarithm to base two.
\begin{lemma}\label{lem:technical}
Let $d_1, \ldots, d_m$, $d$ and $n$ be positive integers such that $\sum_{i\in [m]} d_i\leq d$ and $n\geq m$. Then $\sum_{i\in [m]} \log d_i \leq m\log(d/m)$ and $\sum_{i\in [m]}\log d_i\leq n \log (d/n+1)$.
\end{lemma}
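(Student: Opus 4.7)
Both inequalities are direct applications of the concavity of the logarithm (Jensen's inequality), so the plan is short.

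For the first inequality, I would apply Jensen's inequality to $\log$ on the $m$ values $d_1,\dots,d_m$, namely
\[
\frac{1}{m}\sum_{i\in[m]}\log d_i \;\le\; \log\!\left(\frac{1}{m}\sum_{i\in[m]} d_i\right) \;\le\; \log(d/m),
\]
and multiply through by $m$.

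For the second inequality, the trick is to pad the list to length $n$ with ones (whose logarithm is $0$). Define $d_i'=d_i$ for $i\le m$ and $d_i'=1$ for $m<i\le n$; then $\sum_{i=1}^n \log d_i'=\sum_{i=1}^m\log d_i$ and $\sum_{i=1}^n d_i'\le d+(n-m)\le d+n$. Applying Jensen's inequality to the $n$ values $d_1',\dots,d_n'$ gives
\[
\frac{1}{n}\sum_{i=1}^n\log d_i' \;\le\; \log\!\left(\frac{1}{n}\sum_{i=1}^n d_i'\right) \;\le\; \log\!\left(\frac{d}{n}+1\right),
\]
and multiplying by $n$ yields the claim.

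There is no real obstacle here; the only small point to get right is the padding with ones, which is what lets the second bound handle $n>m$ without producing a negative argument to the logarithm.
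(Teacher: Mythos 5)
Your proof is correct and follows essentially the same route as the paper: the paper invokes the AM--GM inequality and then takes logarithms, which is the same computation as your direct appeal to concavity of $\log$ via Jensen; and the second inequality is handled in both cases by padding the list with ones to length $n$.
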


\begin{proof}
Recall that by the inequality between arithmetic and geometric mean 
\[\left(\prod_{i\in[m]} d_i\right)^{1/m} \leq \frac{\sum_{i\in [m]}d_i}{m}.\]

Since $\prod_{i\in[m]} d_i = 2^{\sum_{i\in[m]} \log d_i}$, we get
\[ \sum_{i\in[m]} \log d_i \leq m \log \left(\frac{\sum_{i\in [m]}d_i}{m} \right) \leq m \log \left(d/m\right).\]

Define $d_i:=1$ for $m<i\leq n$. Note that $\sum_{i\in[m]} \log d_i= \sum_{i\in[n]} \log d_i$ and $\sum_{i\in [n]} d_i\leq d+n$. As in the previous case, we get
\[ \sum_{i\in[n]} \log d_i \leq n \log \left(\frac{\sum_{i\in [n]}d_i}{n} \right) \leq n \log \left((d+n)/n\right)=n \log(d/n+1).\]
\end{proof}

\begin{proof}[Proof of Lemma~\ref{lem:simple-shrink}.]
Note that since $H$ is a hypertree, it does not have edges of size one. By the previously mentioned result of Lov\'asz~\cite{bib:lovasz}, it is always possible to shrink hyperedges of size $3$ in $H$ so that the resulting graph is a spanning tree. Let $T$ be such a spanning tree, rooted in an arbitrarily chosen vertex $r$. In the following claim, we describe a way to modify $T$, while preserving the property that it can be obtained from $H$ by shrinking hyperedges of size $3$.

\begin{claim}
Let $h=uvw$ be a hyperedge in $H$ such that $T$ contains $uv$ obtained by shrinking $h$. Then, we can replace $uv$ by $uw$ or $vw$ so that the resulting graph $T'$ is a spanning tree which can be obtained from $H$ by shrinking hyperedges.
\end{claim}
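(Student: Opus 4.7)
The plan is to use a simple exchange argument based on which component of $T-uv$ contains the third vertex $w$ of the hyperedge $h$.

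First I would remove the edge $uv$ from $T$. Since $T$ is a spanning tree, $T-uv$ has exactly two connected components: the subtree $T_u$ containing $u$ and the subtree $T_v$ containing $v$. The vertex $w$ lies in exactly one of these two components.

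Next, I would split into two cases. If $w \in T_v$, then $uw$ joins $T_u$ and $T_v$, so $T' := (T - uv) + uw$ has $|V(H)|-1$ edges and is connected, hence a spanning tree. If $w \in T_u$, then symmetrically $T' := (T - uv) + vw$ is a spanning tree. Either way, one of $uw$ or $vw$ produces a spanning tree $T'$ on $V(H)$.

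Finally, I would argue that $T'$ arises from $H$ by shrinking hyperedges. The only modification is to the shrinking of $h$: every other hyperedge is shrunk exactly as it was for $T$, while $h=\{u,v,w\}$ is now shrunk to $\{u,w\}$ or $\{v,w\}$ instead of $\{u,v\}$. Since $\{u,w\}$ and $\{v,w\}$ are both proper subsets of $h$ of size two, each is a legal shrinking of $h$, so the new assignment still shrinks $H$ to $T'$.

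The only potentially tricky point is checking that $T'$ is genuinely a tree rather than merely a connected subgraph with the right number of edges, but this follows immediately from the bipartition of $V(H)$ into $V(T_u)$ and $V(T_v)$: adding a single edge between the two sides reconnects them without creating a cycle. No obstacle is expected.
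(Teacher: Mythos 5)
Your proposal is correct and takes essentially the same approach as the paper: remove $uv$, identify which component of $T-uv$ contains $w$, and add the edge from $w$ to the opposite side, noting that the new edge is still a size-two shrinking of $h$. The paper additionally fixes a root $r$ and normalizes $u$ to be closer to $r$, but that normalization is only needed for the subsequent claim about order-independence of flips, not for this one, so omitting it here is fine.
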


\begin{inproof}
Without loss of generality assume that $u$ is on the path from $v$ to $r$ in $T$. If $w$ is in the subtree of $T\setminus uv$ which contains $v$, we replace $uv$ by $uw$. Otherwise, we replace $uv$ by $vw$. In both cases we added an edge between the two components of $T\setminus uv$ and therefore obtained a spanning tree. Since the edge that replaces $uv$ can be obtained by shrinking $h$, the resulting spanning tree can be obtained from $H$ by shrinking hyperedges.
\end{inproof}

Note that exactly one of the two possible edge replacements in the previous claim yields a tree. We call this operation {\em flipping $h$ to $w$}. In the following claim, we observe that the outcome of flipping several hyperedges to a fixed vertex $w$ does not depend on the order in which we perform these flippings.

\begin{claim}
Let $h=[u,v,w]$ be a hyperedge in $H$ such that $T$ contains $e=uv$ obtained by shrinking $h$. Let $T'$ be a tree obtained  from $T$ by flipping some hyperedges different from $h$ to $w$. The outcome of flipping $h$ in $T$ and $T'$ is the same, i.e., $e$ is replaced by $uw$ in $T$ if and only if $e$ is replaced by $uw$ in $T'$.
\end{claim}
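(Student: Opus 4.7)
The plan is to induct on the number of hyperedges flipped when passing from $T$ to $T'$. The base case of zero flips is immediate. For the inductive step it is enough to analyse a single additional flip of some hyperedge $h'=[a,b,w]\neq h$ applied to an intermediate tree $T_k$, producing $T_{k+1}$: this flip removes the current shrinking edge $ab$ of $h'$ and inserts either $aw$ or $bw$. What I must show is that flipping $h$ in $T_k$ and in $T_{k+1}$ yields the same edge.

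By the description of the flipping operation, the outcome of flipping $h$ depends only on which component of $T_k\setminus e$ (respectively $T_{k+1}\setminus e$) contains $w$, so it suffices to prove that $w$ lies on the same side of the $uv$-cut in both trees. The key observation is that in any spanning tree the edge $e=uv$ is the unique edge crossing the $uv$-cut, because the unique path from $u$ to $v$ in the tree passes through $e$. Since $h'\neq h$, the shrinking edge $ab$ of $h'$ in $T_k$ differs from $e$, so $a$ and $b$ lie on the same side of $e$ in $T_k$, say on the $u$-side. Removing $ab$ from that $u$-side splits it into two subcomponents $C_a\ni a$ and $C_b\ni b$; for the flip to yield a spanning tree the vertex $w$ must lie in $C_a\cup C_b$, hence certainly on the $u$-side. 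Moreover the whole modification from $T_k$ to $T_{k+1}$ is internal to the $u$-side, so the $v$-side is unchanged and $w$ remains on the $u$-side of $T_{k+1}\setminus e$. Thus flipping $h$ produces the same edge in $T_k$ and in $T_{k+1}$, closing the induction.

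The only point I expect to need care about is checking that the flip of $h'$ is well-defined at every stage of the induction. The inequality $ab\neq e$ holds because distinct hyperedges always correspond to distinct shrinking edges in an intermediate tree, and $h'\neq h$; the fact that the newly inserted edge is not already present in $T_k$ is subsumed by the first claim, which already asserts that flipping produces a spanning tree. Once these technicalities are settled, the cut-based argument above goes through cleanly and gives the desired conclusion.
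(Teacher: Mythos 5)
Your argument breaks at the claim that ``for the flip to yield a spanning tree the vertex $w$ must lie in $C_a\cup C_b$, hence certainly on the $u$-side.'' That is not true: the flip of $h'$ yields a spanning tree no matter where $w$ sits (that is precisely the content of the previous claim in the paper), and the location of $w$ only decides \emph{which} of $aw$, $bw$ is inserted. In particular $w$ may lie on the opposite side of $e$ from $a,b$, say $w\in V$ with $a,b\in U$. Keeping your notation and fixing $u\in C_a$, the two components of $T_k-ab$ are $C_a\cup V$ and $C_b$; since $w\in V\subseteq C_a\cup V$, the flip inserts $bw$, an edge that \emph{crosses} the $e$-cut. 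Then your statement that ``the whole modification \ldots\ is internal to the $u$-side'' fails, and the $v$-component of $T_{k+1}\setminus e$ is $C_b\cup V$ rather than $V$. The conclusion you want does still hold in this case --- $w\in V\subseteq C_b\cup V$, so $w$ stays with $v$ --- but your proof as written does not establish it; you would need to add the sub-case $w\in V$ and check directly that $w$ and $v$ remain together in $T_{k+1}\setminus e$.

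The paper avoids the case split with an orientation invariant: orient $T_k$ as an outbranching rooted at $w$; the outcome of flipping $h$ depends only on the direction of the arc $\vec e$ (the inserted edge joins $w$ to the head of $\vec e$); and flipping any $h'\neq h$ to $w$ deletes one arc and reattaches the detached subtree directly to the root $w$, which preserves the orientation of every surviving arc, in particular of $\vec e$. That handles all positions of $w$ uniformly, and is worth comparing with your (repairable but currently incomplete) cut-based analysis.
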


\begin{inproof}
Orient the edges of $T$ to obtain an outbranching with $w$ being the only vertex of indegree $0$. Observe that the result of flipping $h$ to $w$ depends only on the orientation of the edge $e$ (in particular, the edge $e$ will be replaced by the edge between $w$ and the head of the arc $\vec{e}$). Moreover, observe that flipping hyperedges different from $h$ to $w$ does not change the orientation of the edge $e$.
\end{inproof}

We call a vertex $v$ {\em poor} if $\deg_{T}v<\deg_{H}v/100$. Note that since every vertex has degree at least one in $T$, every poor vertex has degree at least $100$ in $H$.
Starting with $T$ we run the following randomized algorithm. If $T$ contains a poor vertex $v$ we run the following procedure $\Fix(v)$ modifying $T$. For every hyperedge $h=uvw$ of $H$ such that $uw\in E(T)$ we take a random bit and if it is $1$, we flip $h$ to $v$. We then recursively run $\Fix(v')$ for every poor vertex $v'$ in the closed neighborhood of $v$ in $H$ (that is, the neighborhood including $v$). Note that after the end of the procedure $\Fix(v)$, $v$ is not poor and the vertices which were not poor before did not become poor. Therefore, we need to call $\Fix(v)$ for every vertex at most once to obtain a tree without poor vertices. However, we do not know how many times can $\Fix(v)$ be called in the recursion.

Now, we argue that $\Fix(v)$ terminates with probability one using the entropy compression argument, that was first introduced in~\cite{bib:entropy}. Each call of $\Fix(v)$ uses at least $99\deg_{H}v/100$ random bits, because $\Fix$ is called only if $v$ is poor. We record the run of the algorithm in such a way that at any moment, it is possible to reconstruct the original tree $T$ and the sequence of random bits used from the current state of $T$ and the algorithm's record.

We assume that the information of the initial vertex is not part of the algorithm's record. Moreover, we assume that we are given an ordering on the vertices of $H$. For recording, we use four symbols; $0,1$, $\Downarrow$ for a recursive call and $\Uparrow$ for return from recursion. Therefore, we need two bits per symbol. For each call $\Fix(v)$, we record

\begin{enumerate}[label=(\alph*)]
\item\label{it:orig} what was the neighborhood of $v$ in $T$ before flipping
\item\label{it:v} vertices in $N_H[v]$ for which we run $\Fix$ recursively, each separated by $\Downarrow$ from both sides
\item\label{it:re} return from the recursive call $\Uparrow$
\end{enumerate}

The record after calling $\Fix(v)$, with $N_H[v]=\{v_0,\ldots, v_{\deg_H v}\}$ might for example look as follows:
\[(\mbox{\tt{state of }} v) \Downarrow v_{i_1} \Downarrow (\mbox{\tt{state of }} v_{i_1}) \Uparrow \Downarrow v_{i_2} \Downarrow (\mbox{\tt{state of }} v_{i_2})\ldots\]

It is easy to see that from \ref{it:v} and \ref{it:re} (and knowledge of the initial vertex $v$) it is possible to reconstruct for which vertices and in which order $\Fix$ was called. Next, from the current $T$, we can reconstruct the initial $T$ by modifying neighborhoods of vertices according to the records from \ref{it:orig}. 

We encode \ref{it:orig} using symbols $0$ and $1$. We claim that we can always do so using $\lceil \deg_H v/7\rceil$ symbols. 
 
\begin{claim}
The neighborhood of a poor vertex $v$ in $T$ can be encoded into a binary string of length $\lceil \deg_H v/7\rceil$.
\end{claim}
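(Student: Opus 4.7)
The plan is to produce a binary encoding of length at most $\lceil \deg_H v/7\rceil$ that uniquely specifies the $T$-neighborhood of $v$. Write $d := \deg_H v$. By the definition of poor, $\deg_T v < d/100$, and in particular $d \geq 100$.

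Each of the $d$ hyperedges of $H$ that contains $v$ has size two or three, so the set $N_H(v)$ of potential tree-neighbors of $v$ has size at most $2d$, and the $T$-neighborhood of $v$ itself is a subset of $N_H(v)$ of size less than $d/100$. To make the encoding reversible with respect to $H$, we additionally record, for each tree edge at $v$ that arose from shrinking a size-three hyperedge, a single bit indicating which of the two non-$v$ vertices of that hyperedge is the actual $T$-neighbor; this contributes at most $d/100$ extra bits. Hence the number of possible configurations is bounded by $\binom{2d}{\leq d/100}\cdot 2^{d/100}$.

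Taking binary logarithms and using the standard estimate $\log \binom{N}{\leq k}\leq k\log(eN/k)$ (or, equivalently, the second inequality of Lemma~\ref{lem:technical} after a suitable padding), we obtain
\[
\frac{d}{100}\log(200e) + \frac{d}{100} + O(\log d) \;\approx\; 0.101\,d,
\]
which is comfortably smaller than $d/7 \approx 0.143\,d$ for $d \geq 100$. A standard prefix-free encoding---write the size of the neighborhood first in $O(\log d)$ bits, then the chosen subset of $N_H(v)$, then the orientation bits---absorbs the logarithmic bookkeeping into this slack and fits within $\lceil d/7\rceil$ bits.

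The main obstacle is not the arithmetic but choosing an encoding that interacts cleanly with the surrounding entropy-compression argument: the record must let us invert $\Fix(v)$ and read off the random bits it used. This works because a hyperedge $h=uvw$ is flipped by $\Fix(v)$ precisely when its tree edge is incident to $v$ after the call but was $uw$ before, and the pre- and post-$\Fix$ neighborhoods at $v$, together with the fixed knowledge of $H$, determine exactly this information.
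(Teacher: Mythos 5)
Your proposal is correct and gives the stated bound, but it takes a genuinely different route from the paper's. The paper forms a length-$\deg_H v$ $\{0,1\}$-incidence string with at most $\deg_H v/100$ ones, run-length encodes it over a three-symbol alphabet (replacing each maximal block of $0$s by its binary length, with Lemma~\ref{lem:technical} controlling the total number of digit symbols), and converts the resulting ternary string to binary, arriving at roughly $0.137\,\deg_H v$ bits. You instead count the candidate neighborhoods directly as sparse subsets of $N_H(v)$ (of size at most $2\deg_H v$), apply the estimate $\log\binom{N}{\leq k}\leq k\log(eN/k)$, and obtain roughly $0.10\,\deg_H v$ bits; both figures lie below $\deg_H v/7\approx 0.143\,\deg_H v$, so either argument closes the claim. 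Two small points worth noting: the ``orientation bit'' per size-three hyperedge is actually redundant, since (as your final paragraph itself observes) the current tree together with $H$ already determines which hyperedge produced each pre-call tree edge at $v$, so recording the subset of $N_H(v)$ alone suffices; and the $O(\log d)$ prefix bookkeeping you invoke can be dispensed with, because one may simply enumerate the at most $\binom{2d}{\leq d/100}\,2^{d/100}$ admissible configurations and index them by a fixed-length binary string of $\bigl\lceil\log_2\bigl(\binom{2d}{\leq d/100}\,2^{d/100}\bigr)\bigr\rceil\leq\lceil\deg_H v/7\rceil$ bits, which removes the one place where your argument was not fully explicit. With that cleanup, your counting approach is in fact somewhat cleaner and leaves more slack than the paper's base-conversion trick.
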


\begin{inproof}
First, we create a binary string of length $\deg_H v$ by writing $0$ for a vertex not adjacent to $v$ in $T$ and $1$ for the vertex adjacent to $v$ (following the given ordering on the vertices). Note that this string contains at most $1/100 \deg_H v$ ones. We can encode this string using $3$ symbols, $1$ and $\mathbf{0,1}$; we replace each interval consisting of $0$ by its length encoded using symbols $\mathbf{0,1}$. 

Encoding $d$ needs at most $1+\lfloor\log d\rfloor$ symbols. Thus, by Lemma~\ref{lem:technical}, the number of symbols $\mathbf{0,1}$ that are needed is bounded by $ 1/100 \deg_H v (1+ \log 101)$. It follows that the length of the resulting string is at most $ 1/100 \deg_H v \cdot(2+ \log 101)$. There are at most $3^{ 1/100 \deg_H v (2+ \log 101)}$ such strings. They can be translated into binary strings of length \[ \lceil \log 3^{ 1/100 \deg_H v (2+ \log 101)}\rceil= \lceil  1/100 \deg_H v (2+ \log 101) \log 3\rceil \leq \lceil \deg_H v/7\rceil.\]

\end{inproof}

When recording \ref{it:v}, we consider ordering on the vertices in the closed neighborhood of $v$ in $H$ and we encode a $j$-th recursive call from $v$ to its neighbor $v_{i_j}$ as the difference between the order of $v_{i_j}$ and the previously called neighbor $v_{i_{j-1}}$ of $v$. That is, $\Fix$ is called on the poor neighbors of $v$ according to the given ordering (this is possible, because, as previously observed, when $\Fix$ returns, the vertices that were previously not poor did not become poor). For the first recursive call from $v$, we record the order $i_1$ of the first poor neighbor to recurse to, for the consecutive calls, instead of recording $i_j$ for the $j$-th vertex $\Fix$ recurses to from $v$, we record $i_j-i_{j-1}$.

Later in the entropy compression argument, we argue that every recursive call uses more random bits than the size of the algorithm's record (in bits). To make this argument work, we split the "cost" of recording recursive calls from $v$ between the record of the call $\Fix(v)$ and records of the recursive calls originating from the call $\Fix(v)$. In particular, we request that every recursive call contributes $\ec=5$ symbols for being called. 

We claim that with the contributions from the recursive calls, there are at most $\deg_H v/7$ symbols additional symbols needed to record the list of all the recursive calls initiated by $\Fix(v)$.

\begin{claim}
Assume that $\Fix(v)$ initiates $k$ recursive calls. Then, we need at most $\deg_H v/7 + k\ec$ symbols for recording the list of these calls.
\end{claim}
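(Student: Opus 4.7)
The plan is to encode the list of $k$ recursive calls made by $\Fix(v)$ via the gaps between successively called poor neighbors. Since those neighbors are processed in increasing order along the fixed vertex ordering of $N_H[v]$, writing $d_j := i_j - i_{j-1}$ (with $i_0 = 0$) gives a sequence of positive integers $d_1,\ldots,d_k$ with $\sum_j d_j \le |N_H[v]| \le \deg_H v + 1$, which is the quantity we will feed into Lemma~\ref{lem:technical}.

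First I would split the per-call cost into a fixed delimiter part (the opening $\Downarrow$, the separating $\Downarrow$ before the called $\Fix$'s internal record, and the closing $\Uparrow$), totalling three symbols per call, plus a variable part for writing $d_j$ itself. Charging the $3k$ delimiter symbols against the $k\ec = 5k$ contributions of the called $\Fix$es leaves a slack of $2k$ symbols, on top of the $\deg_H v/7$ target, to accommodate the encodings of the $d_j$'s.

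Next I would encode each $d_j$ in a compact prefix-free format over the four-symbol alphabet, so that the cost of writing $d_j$ is at most $c\log d_j + O(1)$ symbols for some constant $c < 1$. For instance, writing $d_j$ in ternary using the symbols $\{0,1,\Uparrow\}$ and letting the $\Downarrow$ already present in the delimiter pattern serve as a prefix-free terminator yields $c = 1/\log 3 \approx 0.63$. Summed over calls, the total variable cost is at most $c\sum_j \log d_j + O(k)$.

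The key step is to bound $\sum_j \log d_j$ using Lemma~\ref{lem:technical}, which yields $\sum_j \log d_j \le k\log\bigl((\deg_H v + 1)/k\bigr)$. The main obstacle is the calibration of constants: one must verify that, for every $k$ in the valid range $[1,\deg_H v+1]$, the concave function $k \mapsto c\cdot k\log((\deg_H v+1)/k)$ stays below $\deg_H v/7 + 2k$. A short calculus argument on $f(x) = c\cdot x\log(1/x) - 2x$ with $x = k/(\deg_H v+1)$ shows that the maximum is attained near $x = 1/e$ and, for the ternary choice $c = 1/\log 3$, stays strictly below $1/7$, so that the slack provided by $\ec = 5$ is exactly enough to absorb both the $3k$ delimiters and the variable encoding cost. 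This establishes the claimed bound of $\deg_H v/7 + k\ec$ symbols.
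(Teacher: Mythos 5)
Your approach is essentially the one the paper uses: record the $j$-th recursive call as the gap $d_j=i_j-i_{j-1}$ in the ordering of $N_H[v]$, apply Lemma~\ref{lem:technical} to bound $\sum_j\log d_j$ by $k\log(\deg_H v/k)$ (up to the inconsequential $+1$ you carry from $|N_H[v]|$), and then maximize over $k$ by a one-line calculus computation to absorb the result into the budget $\deg_H v/7+\ec k$. The paper does exactly this, but with a simpler bookkeeping: it charges only the $2k$ symbols $\Downarrow$ to this claim (the $\Uparrow$ is billed to the child's own record), encodes each $d_j$ in \emph{binary} using $\lfloor\log d_j\rfloor+1$ symbols, and needs $k\bigl(\log(\deg_H v/k)-2\bigr)\le\deg_H v/7$, whose maximum is $\deg_H v/(4e\ln 2)\approx\deg_H v/7.54$. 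Your ternary detour over $\{0,1,\Uparrow\}$ (to compensate for the extra $\Uparrow$ you include) is valid but unnecessary, and introduces a mild decoding concern (reusing the control symbol $\Uparrow$ as a digit) that is resolvable but adds complexity. Two small inaccuracies, neither fatal: the slack you have after subtracting delimiters is $k$ rather than $2k$ once you account for the $+1$ per-number overhead, and the maximizer of $cx\log(1/x)-2x$ with $c=1/\log 3$ sits near $x\approx 1/(9e)$, not near $1/e$; the resulting maximum is still comfortably below $1/7$, so the conclusion stands.
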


\begin{inproof}
The recording of the recursive calls from $v$ consists of $2k$ symbols $\Downarrow$ and $k$ binary numbers $d_1,\ldots, d_k$, such that $\sum_{i\in [k]} d_i\leq \deg_H v$. We need $\lfloor\log d_i\rfloor +1$ symbols   for recording $d_i$ in binary. Thus, we need $3k+ \sum_{i\in [k]} \lfloor\log d_i\rfloor$ symbols in total. Using Lemma~\ref{lem:technical}, we get that $3k + \sum_{i\in [k]} \log d_i\leq 3k + k (\log \deg_H v - \log k)= k \ec + k (\log \deg_H v - \log k +3-c)$. Since $k (\log \deg_H v - \log k +3-c)$ attains its maximum when $\log k =\log \deg_H v-2-1/\ln 2$, $k (\log \deg_H v - \log k +3-\ec)< \deg_H v/7$ for any $k$. Thus, $3k+ \sum_{i\in [k]} \lfloor\log d_i\rfloor\leq \deg_H v/7 + k\ec$ for every $k$.
\end{inproof}

Therefore, the number of symbols of the record corresponding to the call $\Fix(v)$ is less than $d=\lceil 1/7 \deg_H v\rceil+1/7 \deg_H v + c + 1 \leq 2/7 \deg_H v + 7$ ($\lceil 1/7 \deg_H v\rceil$ symbols for recording the neighborhood of $v$, $1/7 \deg_H v$ symbols for recording the recursive calls, $c$ for $\Fix(v)$ being called and one symbol for $\Uparrow$ for return from $\Fix(v)$). That is at most $2d \leq 4/7 \deg_H v + 14$ bits. Since $\deg_H v$ is at least $100$ for every poor vertex $v$, $4/7 \deg_H v + 14<\frac{99}{100}\deg_H v$.
It follows that the algorithm terminates almost surely.
\end{proof}

\begin{lemma}\label{lem:shrink}
For every positive integers $k$, $r$ and $D\geq 10^6$ the following holds. Let $H$ be an $\ell$-divisible $3k$-edge-connected complex hypergraph without stubs and with
\begin{itemize}
\item minimum degree $D$,
\item $\iota(G)\leq r$, and
\item  $\udeg v\geq  \edeg v/500 $ or $\hdeg v \geq \deg v/2$ for every $v$.
\end{itemize}

Then, there is a total shrinking $H'$ of $H$ that is $k$-edge-connected, with minimum edge degree at least $D/200$, $\udeg_{H'} v\geq \deg_{H'} v/1000 $ and all except $S:=\max(r,1000)$ stubs at each vertex can be divided into balanced stub-pairs. Note that $H'$ is also $\ell$-divisible.
\end{lemma}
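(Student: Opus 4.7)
The plan is to build $H'$ as a union of a highly connected \emph{backbone} obtained by shrinking spanning hypertrees, together with a randomised shrinking of the remaining hyperedges that preserves edge-degree and leaves behind a stub distribution amenable to pairing. Since $H$ is $3k$-edge-connected with hyperedges of size at most three, Observation~\ref{obs:part-con} makes it $k$-partition-connected, so Theorem~\ref{thm:nwh} produces $k$ edge-disjoint spanning hypertrees $T_1, \ldots, T_k$. Applying Corollary~\ref{cor:simple-shrink} to each $T_i$ shrinks it to a spanning complex graph $T_i'$ whose edges form a spanning tree with $\edeg_{T_i'} v \ge \deg_{T_i} v / 100$ at every vertex; the union $\bigcup_i T_i'$ already contains $k$ edge-disjoint spanning trees, so any final shrinking that contains it is automatically $k$-edge-connected, and it provides the bulk of the edges at every vertex that has substantial hypertree-degree.

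For each hyperedge $h$ of $H$ not used by any $T_i$, I would independently and uniformly at random pick one of its three incidences to become a stub and let the other two incidences form a wiggly edge; the wiggly edge is given length $1$ whenever $\alpha(h) \ge 1$ (so that it contributes to $\udeg_{H'}$), with the residual length going to the stub. Together with the backbone, this determines a candidate $H'$. The edge-degree bound $\edeg_{H'} v \ge D/200$ and the unit-degree bound $\udeg_{H'} v \ge \deg_{H'} v / 1000$ then follow by a routine case analysis on the hypothesis: if $\udeg_H v \ge \edeg_H v / 500$ the original unit edges of $H$ already dominate the count; if $\hdeg_H v \ge \deg_H v / 2$ then in expectation a $2/3$-fraction of the hyperedges at $v$ yield a fresh unit wiggly edge at $v$, and a Chernoff estimate combined with $D \ge 10^6$ gives the required fractions with ample slack.

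The main obstacle, and the place where the probabilistic machinery is really needed, is arranging that at every vertex $v$ all but at most $S := \max(r, 1000)$ of the resulting stubs can be grouped into balanced stub-pairs — pairs with distinct indices (or one equal to $-1$) whose lengths sum to $0 \mod \ell$. Since $\iota(H) \le r$ bounds the multiplicity of each index at $v$, a greedy pairing at $v$ succeeds provided the empirical joint distribution of (index, length mod $\ell$) among the stubs at $v$ is not too unbalanced. I would model the local failure mode by bad events $A_v := \{v \text{ has more than } S \text{ unpairable stubs}\}$ and apply the Asymmetric Lov\'asz Local Lemma (Lemma~\ref{LLL}). Each $A_v$ depends only on the random choices at hyperedges containing $v$, so $A_u$ and $A_v$ are correlated only when $u$ and $v$ share a hyperedge, producing a dependency graph of local size polynomial in $\deg_H v$; a Chernoff-type tail bound makes $\Pr[A_v]$ exponentially small in $D$, and the hypothesis $D \ge 10^6$ provides the slack closing the LLL inequality. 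Avoiding all $A_v$ simultaneously yields the desired $H'$; $\ell$-divisibility of $H'$ is immediate since shrinking preserves total length, so $\alpha(H') = \alpha(H) \equiv 0 \mod \ell$.
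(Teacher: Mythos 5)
Your high-level plan matches the paper's up to a point: partition-connectivity plus Nash-Williams for hypergraphs gives $k$ edge-disjoint spanning hypertrees, Corollary~\ref{cor:simple-shrink} shrinks each to a spanning tree backbone, and the remaining hyperedges are shrunk at random with Chernoff plus the Lov\'asz Local Lemma controlling the degree losses. The gap is in the balanced-stub-pair step, and it is not merely a technical one.

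You assign the wiggly edge from each shrunk hyperedge $h$ length $1$ and give the residual $\alpha(h)-1$ to the stub. This \emph{fixes} every stub's length as a function of the data in $H$. The randomness you retain — which of the three incidences becomes a stub — only redistributes where stubs land, not what lengths they carry. So if, for example, $\ell=5$ and every hyperedge incident with a vertex $v$ has $\alpha(h)\equiv 3 \pmod{\ell}$, then every stub arriving at $v$ has length $2 \pmod{\ell}$, and \emph{no} two of them can form a balanced pair ($2+2\not\equiv 0$); the bad event $A_v$ holds with probability $1$, and no LLL or Chernoff estimate can rescue it. The phrase ``a greedy pairing at $v$ succeeds provided the empirical joint distribution of (index, length mod $\ell$) among the stubs at $v$ is not too unbalanced'' is the crux: with your fixed lengths there is simply no mechanism forcing that distribution to be favourable. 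The missing idea is that the \emph{lengths of a constant fraction of the stubs must be left undetermined} at the time of shrinking, so they can be chosen afterwards to balance the pairs. This is precisely what the paper's rigid/flexible dichotomy does: it randomly designates roughly a third of the shrink-pairs as ``rigid'' (their wiggly edge gets length $1$ to feed $\udeg$, stub length forced), the rest as ``flexible'' (lengths deferred), uses LLL to ensure every vertex with high stub-degree has at least half of its stubs flexible and every hyperedge-heavy vertex has $\geq \deg_H v/1000$ rigid wiggly edges, and then greedily matches each rigid stub at $v$ with a flexible stub of a different index, assigning the flexible stub the complementary length. After the rigid stubs are matched, the remaining flexible stubs are paired with each other and given lengths freely. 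The index constraint is then handled by $\iota \leq r \leq S$, and at most $S$ stubs stay unpaired. Without keeping a flexible reserve of undetermined lengths, your argument cannot close.
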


\begin{proof}[Proof of Lemma~\ref{lem:shrink}]
First, note that if $\hdeg v < \deg v/2$, we have $\edeg_{H'}v\geq D/200$ and $\udeg_{H'} v\geq \deg_{H'} v/1000$ for any total shrinking $H'$ obtained from $H$ by shrinking only hyperedges (but not wiggly edges).

By Observation~\ref{obs:part-con} and Theorem~\ref{thm:nwh}, applied to the underlying hypergraph of $H$, $H$ contains $k$ edge-disjoint spanning hypertrees $T_1,\ldots, T_{k}$. By Corollary~\ref{cor:simple-shrink}, the hyperedges of each hypertree $T_i$, $i\in [k]$, can be shrinked so that the resulting complex graph $T'_i$ consists of a spanning tree and possibly some stubs and satisfies $\edeg_{T_i'} v\geq \deg_{T_i} v/100$ for every $v\in V(H)$.

Let $R=H\setminus (\bigcup_{i=1}^{k} E(T_i))$. We shrink the hyperedges of $R$ to obtain a complex graph $R'$ such that $\edeg_{R'} v\geq \deg_R v/2$ for every vertex with $\deg_R v\geq D/2$.

The existence of such a shrinking follows from Chernoff bound and Lov\'asz Local Lemma; if we shrink hyperedges in $R$ independently at random, the expected edge-degree of $v$ in $R'$ is at least $2/3 \deg_R v$ for every vertex $v$. Let $E_v$ be the event that $\edeg_{R'} v$ is less than $\deg_R v/2$. By Chernoff bound, the probability of $E_v$ is less than $c^{\deg_R v}$ with $c< 0.98$.  
Each $E_v$ is independent of all but at most $2\deg_R v$ other events, therefore, by asymmetric Lov\'asz Local lemma with $x=1/\deg_R v$, the probability that $E_v$ does not happen for any $v$ with $\deg_R v\geq D/2$ is positive, since $c^{\deg_R^v}<1/\deg_R v (1- 2/D)^{\deg_R v}$ for $v$ with $\deg_R v > D/2$ and $D$ sufficiently large.

Therefore, the complex graph $H'=R'\cup \left(\bigcup_{i\in[\kk]} (T'_i)\right)$ has minimum edge-degree at least $D/200$. We call a pair of a wiggly edge and a stub obtained by shrinking a hyperedge {a \em shrink-pair}. Next, we assign lengths to stubs and wiggly edges in shrink-pairs so that the conditions on unit degree and stubs are satisfied. Again, this follows from Lov\'asz Local lemma.

We divide the shrink-pairs in $H'$ into {\em rigid} and {\em flexible} (calling the edge and the stub in a rigid or a flexible shrink-pair rigid or flexible, respectively) in such a way that each vertex $v$ incident with at least $\deg_H v/2$ hyperedges in $H$ is incident with at least $\deg_H v/1000$
 rigid wiggly edges in $H'$ and for each vertex that has stub-degree greater than $S$, at least half of the stubs are flexible.

From such an assignment, the result already follows; we assign length one to the rigid wiggly edges and {\em complementary} length to the corresponding rigid stubs; that is, the length such that the sum of lengths of the wiggly edge and the stub in the shrink-pair is equal to the length of the original hyperedge. Then, for every vertex $v$ of stub degree greater than $S$, we proceed as follows. We greedily pair each rigid stub incident with $v$ with a flexible stub incident with $v$ of different index or index $-1$. After all rigid stubs are paired, we greedily pair remaining flexible stubs with different indices or indices equal to $-1$. Since at least half of the stubs incident with $v$ are flexible, there are at most $r$ stubs with the same index, and since $S\geq r$, at most $S$ stubs incident with $v$ remain unpaired.

We assign lengths to flexible stubs in the pairs in such a way that these pairs are balanced and we assign arbitrary lengths to unpaired flexible stubs. Finally, we assign complementary lengths to the corresponding flexible edges. By construction, the resulting complex graph $H'$ satisfies the conditions of the lemma.

We conclude the proof by showing that it is possible to divide the shrink-pairs in $H'$ into {\em rigid} and {\em flexible} in the desired way. 

\begin{claim}
It is possible to divide the shrink-pairs in $H'$ into {\em rigid} and {\em flexible} in such a way that each vertex $v$ incident with at least $\deg_H v/2$ hyperedges in $H$ is incident with at least $\deg_H v/1000$ rigid wiggly edges in $H'$ and at least half of the stubs incident with $w$ are flexible for each vertex $w$ that has stub-degree greater than $S$ in $H'$.
\end{claim}

\begin{inproof}
Let $s_v$ be the number of stubs incident with $v$ and let $d_v$ be the number of wiggly edges incident with $v$ that arise from shrinking hyperedges. Let $D'=D/400$. Observe that by construction, $d_v\geq D'$ for every $v$ (since $\edeg_{H'} v\geq D/200$ and these edges can be loops). 

Suppose that we select each shrink-pair to be rigid independently at random with probability $1/3$. 
Let $V_S$ be the set of vertices of $H'$ with stub degree at least $S$ and let $V_E$ be the set of vertices of $H'$ incident with at least $\deg_H v/2$ hyperedges in $H$. Let $S_v$ be the event that a vertex $v\in V_S$ is incident with more than $s_v/2$ rigid stubs and let $E_v$ be the event that $v\in V_E$ is incident with less than $d_v/5$ rigid edges. Then by Chernoff bound, $P(S_v)<c_S^{s_v}$ for every $v\in V_S$ and $P(E_v)<c_E^{d_v}$ for every $v\in V_E$, where $c_S, c_E \leq 0.97$. 

Now, we apply Lemma~\ref{LLL} (asymmetric Lov\'asz Local Lemma) to show that with positive probability, none of the events $S_v$, $v\in V_S$, and $E_v$, $v\in V_E$, occurs.
We need to choose $x(S_v)$ and $x(E_v)$ such that
\begin{equation}
c_S^{s_v}\leq x(S_v) \prod_{w\in A_v} (1-x(E_w)) \mbox{ for every $v\in V_S$, and} \label{eq:ALs}
\end{equation}
\begin{equation}
c_E^{d_v}\leq x(E_v) \prod_{w\in B_v} (1-x(E_w))\prod_{u\in C_v} (1-x(S_u))\mbox{ for every $v\in V_E$,}
\label{eq:ALe}
\end{equation}
where $A_v$ is the set of vertices $w\in V_E$ such that there is a shrink-pair consisting of a stub incident with $v$ and an edge incident with  $w$, $B_v$ is the set of vertices $w\in V_E$ such that there is an edge $vw$ arising from shrinking a hyperedge and $C_v$ is the set of vertices $u\in V_S$ such that there is a shrink-pair consisting of a stub incident with $u$ and an edge incident with $v$. Note that $|A_v|\leq 2s_v$ and  $|B_v|, |C_v|\leq d_v$. 
 
We set $x(E_v)=1/d_v$ and $x(S_v)=(1-c_E)/2$. Then, since $S\geq 1000$, $D'\geq 1000$, $c_S<0.97$ and $x(S_v)>0.015$,  we have $c_S< x(S_v)^{1/S}(1-1/D')^2$. It follows that if $s_v\geq S$ for every $v\in V_S$ and $d_w\geq D'$ for every $w\in V_E$, (\ref{eq:ALs}) is satisfied.
 Moreover, since $c_e<0.97$ and $D'\geq 1000$,
 \[c_E< (1/D')^{1/D'} (1-1/D')(1-(1-c_E)/2)\]
and $(1/D')^{1/D'}\leq (1/d_w)^{1/d_w}$ since $d_w\geq D'$ for every $w\in V_E$. Then, (\ref{eq:ALe}) is satisfied. 

\end{inproof}
The statement of the lemma follows.
\end{proof}

\section{Path-graphs}\label{sec:path-graphs}
In this section, we introduce the notions of {\em path-graph} on a complex graph without stubs and of {\em conflict} between complex paths. Throughout the section, we only consider complex graph without stubs. 

Let $G=(V,E_o,E_w,\emptyset,\emptyset)$ be a complex graph without stubs. A {\em path-graph $\Gamma$ {on} $G$} is a pair $(V,{\PP})$ where ${\PP}$ is a set of edge-disjoint complex paths in $G$. We denote by $\PP_{\Gamma}(v)$ the set of paths in $\PP$ with an endpoint in $v$ oriented so that $v$ is their first vertex. In particular, if $P$ is a loop-path with both ends equal to $v$, $\PP_{\Gamma}(v)$ contains both $P$ and $P^r$.

The complex graph $\underline{\Gamma}=(V,E_o',E_w',\emptyset,\emptyset)$, where $E_o'$ and $E_w'$ are the subsets of $E_o$ and $E_w$, respectively, consisting of edges of paths in ${\PP}$, is called the {\em underlying graph} of $\Gamma$. 
If $\underline{\Gamma}=G$, we say that $\Gamma$ is {\em decomposing} $G$.

We denote by $\tilde{\Gamma}$ the complex graph $(V,\emptyset,F,\emptyset,\emptyset)$ where $F$ is the multiset containing a wiggly edge $e_P=([(v_1,i_1),(v_2,i_2)],\alpha)$ for each path $P\in \PP$ where $v_1$ and $v_2$ are the first and the last vertex of $P$, respectively, $i_1,i_2$ are the initial and the terminal indices of $P$, respectively, and $\alpha$ is the length of $P$.

We now transfer some definitions for complex graphs to path-graphs. 
The {\em degree} of a vertex $v$ in $\Gamma$, denoted by $\deg_{\Gamma} v$, is the degree of $v$ in $\tilde{\Gamma}$.
We say that $\Gamma$ is {\em connected} if $\tilde{\Gamma}$ is connected, that $\Gamma$ is {\em Eulerian} if  $\tilde{\Gamma}$ is Eulerian, and that $\Gamma$ is a {\em path-tree} if $\tilde{\Gamma}$ is a tree (even if the paths of $\PP$ pairwise intersect). The {\em end set} of a path-tree $\Gamma=(V,{\PP})$  is the set $X\subseteq V$ of ends of paths in $\PP$.

We also need to speak of the length of the paths in $\PP$. Let us say that $\Gamma$ is 
a {\em $k$-path-graph} if all paths in $\PP$ are $k$-paths and that $\Gamma$ is a {\em long-path-graph} if all paths in $\PP$ are long.

For a complex path $P$ without stubs beginning with a vertex $v$, define $\sub{P}{v}$ as the maximal initial subsequence of $P$ containing no wiggly edges and at most $\ell$ ordinary edges. That is, $\sub{P}{v}$ is a path (in the usual, non-complex  sense) of length at most $\ell$. In particular, $\sub{P}{v}$ consists only of the vertex $v$ if the first edge of $P$ is wiggly.

Let $P_1$ and $P_2$ be complex paths starting with a vertex $v$. We say that $P_1$ and $P_2$ are {\em non-conflicting at $v$} if
\begin{itemize}
\item $\sub{P_1}{v}\cap \sub{P_2}{v}=\{v\}$ and
\item the initial indices of $P_1$ and $P_2$ are different or $-1$ or one of the paths is {universal at $v$}.
\end{itemize}

Otherwise, we say that $P_1$ and $P_2$ are {\em conflicting}.
Recall that $\sub{P}{v}$ is uniquely defined even for loop-paths; $\sub{P}{v}$ is the initial segment in the default orientation of $P$ and $\sub{P^r}{v}$ is the initial segment in the other orientation.

 A {\em tour} or a {\em trail} in a path-graph $\Gamma$ is an alternating sequence of vertices and paths such that they form a tour or a trail in $\tilde{\Gamma}$, respectively. A tour or a trail is called {\em non-conflicting} if every two consecutive paths $P_1, P_2$ are non-conflicting at the end they share in the tour or the trail. (We assume that loop-paths in the tour or the trail have a specified orientation.) 

Let $\Gamma$ be a path-graph and $v$ one of its vertices.
We define the {\em conflict ratio $\conf{v}$ of $v$} in $\Gamma$ as 
\[ \conf{v}=\frac{\max _{P\in \PP_{\Gamma}(v)} |\{Q\in \PP_{\Gamma}(v):Q \mbox{ is conflicting with }P\}|}{\deg_{\Gamma} v}. \]
We set $\conf{\Gamma}=\max_{v\in V(\Gamma)} \conf{v}$.
We define the {\em intersection ratio $\inter{v}$ of $v$} in $\Gamma$ as  
\[ \inter{v}=\frac{\max _{w\neq v} |\{P\in \PP_{\Gamma}(v): w\in \sub{P}{v}\}|}{\deg_{\Gamma} v}. \]
 We set $\inter{\Gamma}=\max_{v\in V(\Gamma)} \inter{v}$.
 We define the {\em index-conflict ratio $\inc{v}$ of $v$} in $\Gamma$ as 
\[\inc{v}=\frac{\max_{P\in \PP'_{\Gamma}(v)} |\{Q\in \PP_{\Gamma}(v):Q \mbox{ and } P\mbox{ have the same index at }v\}|}{\deg_{\Gamma} v}, \] where $\PP'_{\Gamma}(v)$ is the subset of $\PP_{\Gamma}(v)$ consisting of the paths which are not universal at $v$ and have index different from $-1$ at $v$. We set $\inc{\Gamma}=\max_{v\in V(\Gamma)} \inc{v}$.

The following observation describes the relation between $\inter{\Gamma}$, $\inc{\Gamma}$ and $\conf{\Gamma}$.
\begin{observation}\label{obs:inter-conf}
For every path-graph $\Gamma$, $\conf{\Gamma}\leq \inc{\Gamma} + \ell \inter{\Gamma}$.
\end{observation}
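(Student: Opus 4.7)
The plan is to fix an arbitrary vertex $v$, choose a path $P\in \PP_{\Gamma}(v)$ realising the maximum in the definition of $\conf{v}$, and bound the number of paths $Q\in \PP_{\Gamma}(v)$ conflicting with $P$ at $v$ by separating the two distinct causes of conflict. Call $Q$ a \emph{vertex-conflict} of $P$ at $v$ if $\sub{P}{v}\cap \sub{Q}{v}\neq\{v\}$, and an \emph{index-conflict} of $P$ at $v$ if $P$ and $Q$ have the same initial index at $v$, this index is not $-1$, and neither is universal at $v$. By the definition of non-conflict, every conflicting $Q$ falls into at least one of these two classes.

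First I would bound the vertex-conflicts. Since the paths of $\PP$ are edge-disjoint, any intersection $\sub{P}{v}\cap \sub{Q}{v}$ consists only of vertices, and because $\sub{P}{v}$ has length at most $\ell$ it contains at most $\ell$ vertices other than $v$. So any vertex-conflict $Q$ satisfies $w\in \sub{Q}{v}$ for at least one of these $\ell$ vertices $w\neq v$. By the definition of $\inter{v}$, for each such $w$ the set $\{Q\in \PP_{\Gamma}(v):w\in \sub{Q}{v}\}$ has size at most $\inter{v}\cdot \deg_{\Gamma}v$, so a union bound gives at most $\ell\cdot \inter{v}\cdot \deg_{\Gamma}v$ vertex-conflicts of $P$.

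Next I would bound the index-conflicts. If $P$ is universal at $v$ or its initial index at $v$ equals $-1$, then $P$ has no index-conflict at all, so the bound $0$ holds trivially. Otherwise $P\in \PP'_{\Gamma}(v)$, and every index-conflict $Q$ of $P$ lies in $\{Q\in \PP_{\Gamma}(v):Q\text{ and }P\text{ have the same index at }v\}$, which by the definition of $\inc{v}$ has size at most $\inc{v}\cdot \deg_{\Gamma}v$.

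Adding the two contributions, the total number of paths in $\PP_{\Gamma}(v)$ conflicting with $P$ at $v$ is at most $(\inc{v}+\ell\cdot \inter{v})\deg_{\Gamma}v$. Dividing by $\deg_{\Gamma}v$ gives $\conf{v}\leq \inc{v}+\ell\cdot \inter{v}\leq \inc{\Gamma}+\ell\cdot \inter{\Gamma}$, and taking the maximum over $v$ yields the claimed inequality. I do not anticipate any real obstacle here — the argument is a union bound — the only thing to check carefully is that the degenerate cases (loops counted twice in $\PP_{\Gamma}(v)$, $P$ universal, or initial index $-1$) are compatible with the definitions used, and in each case the relevant count is either unchanged or identically zero.
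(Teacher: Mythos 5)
Your proof is correct and matches the paper's argument: both split the conflicting paths $Q$ into those with $\sub{P}{v}\cap\sub{Q}{v}\neq\{v\}$ (bounded by $\ell\cdot\inter{v}\cdot\deg_{\Gamma}v$ via a union bound over the at most $\ell$ vertices of $\sub{P}{v}$ other than $v$) and those sharing the initial index at $v$ (bounded by $\inc{v}\cdot\deg_{\Gamma}v$, trivially zero when $P$ is universal or has index $-1$). The only difference is that you spell out the degenerate cases explicitly, which the paper leaves implicit.
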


\begin{proof}
Let $P\in \PP_{\Gamma} (v)$. Since $\sub{P}{v}$ contains at most $\ell$ vertices different from $v$, the number of paths $Q\in \PP_{\Gamma} (v)$, $P\neq Q$ such that $\sub{P}{v}\cap \sub{Q}{v}\neq \{v\}$ is at most $\ell \inter{v} \deg_{\Gamma} v$. Moreover, the number of paths $Q\in \PP_{\Gamma} (v)$ conflicting with $P$ such that $\sub{P}{v}\cap \sub{Q}{v}= \{v\}$ is at most $\inc{v} \deg_{\Gamma} v$. The result follows.
\end{proof}

Observe that concatenating paths of a non-conflicting Eulerian trail in a long-path graph yields a complex path. This implies the following.

\begin{observation}\label{obs:eul-dec}
Let $G$ be an $\ell$-divisible complex graph and $\Gamma$ a long-path-graph decomposing $G$ with a non-conflicting Eulerian trail. Then, $G$ has an $\ell$-path decomposition. In particular, concatenation of the paths in the non-conflicting Eulerian trail yields a complex $\ell$-path.
\end{observation}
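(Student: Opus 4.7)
The plan is to concatenate all paths of the non-conflicting Eulerian trail, in the order and orientation the trail prescribes, into a single sequence $P$, and then verify that $P$ is itself a complex path. Since the trail uses every path of $\Gamma$ exactly once and $\Gamma$ decomposes $G$, $P$ uses every edge of $G$ exactly once, so $\alpha(P)=\alpha(G)$. The $\ell$-divisibility of $G$ then gives $\alpha(P)\equiv 0\pmod{\ell}$, so $P$ is a complex $\ell$-path and $\{P\}$ is the required $\ell$-path-decomposition.

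Most of the eight conditions in the definition of a complex path are immediate from the construction: $G$ has no stubs, consecutive paths in the trail share exactly their junction vertex so vertices and edges alternate correctly, no edge is repeated because the trail is Eulerian and the paths of $\PP$ are edge-disjoint, and the orientations of loop-paths are fixed by the trail. Condition~(vii) on indices is exactly the content of the non-conflicting hypothesis at each junction vertex $u$: either the terminal index of $P_i$ and the initial index of $P_{i+1}$ are distinct, one of them equals $-1$, or one of the two paths is universal at $u$ and we reorder its first two loops to change its index at $u$ without affecting any other index.

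The main obstacle is condition~(vi), which requires a wiggly edge or strictly more than $\ell$ ordinary edges between any two occurrences of a vertex $v$ in $P$. Two occurrences lying within the same path $P_i$ of the trail (whether interior, at a junction endpoint of $P_i$, or at both endpoints when $P_i$ is a loop-path at $v$) are handled directly by condition~(vi) for $P_i$. When the two occurrences lie in non-adjacent paths $P_i,P_j$ with $j-i \geq 2$, at least one complete intermediate path is long by hypothesis and contributes a wiggly edge or at least $\ell$ ordinary edges; the strictly positive contribution from the surrounding partial segments (or a further complete intermediate path) supplies the strict inequality. The remaining case is two interior occurrences of $v$ in adjacent paths $P_i,P_{i+1}$ meeting at a junction $u\neq v$, where the non-conflicting condition $\sub{P_i^r}{u}\cap\sub{P_{i+1}}{u}=\{u\}$ forces $v$ to lie outside at least one of these maximal initial stretches; by maximality in the definition of $\sub{P_{i+1}}{u}$ (or of $\sub{P_i^r}{u}$) the route from $u$ to $v$ on that side traverses either a wiggly edge or at least $\ell+1$ ordinary edges, and combined with the strictly positive distance on the other side this yields more than $\ell$ ordinary edges or a wiggly edge between the two occurrences.
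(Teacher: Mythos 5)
Your proposal is correct and follows the same approach the paper intends: the paper states (without proof) that concatenating the paths of a non-conflicting Eulerian trail in a long-path-graph yields a complex path, and derives the observation from that; your write-up simply supplies the case analysis verifying each condition in the definition of a complex path, with the non-conflicting condition handling both the index condition~(vii) and, via the $\sub{\cdot}{\cdot}$ intersection requirement and the long-path hypothesis, the separation condition~(vi).
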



Jackson (cf. \cite{bib:jackson}, Theorem 6.3) proved that Eulerian path-graphs without loop-paths and with conflict ratio at most $1/2$ have non-conflicting Eulerian tours.

\begin{theorem}\label{thm:eulerian}
Every Eulerian path-graph $\Gamma$  without loop-paths with $\conf{\Gamma} \leq 1/2$
has a non-conflicting Eulerian tour.
\end{theorem}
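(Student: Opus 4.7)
The plan is to reduce the statement directly to Jackson's theorem on compatible Eulerian circuits. Since $\Gamma$ has no loop-paths, at every vertex $v$ the paths in $\PP_{\Gamma}(v)$ are in bijection with the edges of $\tilde{\Gamma}$ incident with $v$. I would view $\tilde{\Gamma}$ as an Eulerian multigraph equipped with a \emph{transition system}: at each vertex $v$, declare the unordered pair $\{e_P,e_Q\}$ of incident edges to be \emph{forbidden} when the corresponding paths $P$ and $Q$ are conflicting at $v$, and \emph{admissible} otherwise. The hypothesis $\conf{\Gamma}\le 1/2$ then translates exactly into the assertion that at every vertex $v$ and for every incident edge $e_P$, at most $\deg_{\Gamma} v /2$ other edges at $v$ form a forbidden transition with $e_P$.

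With this dictionary, a non-conflicting Eulerian tour in $\Gamma$ corresponds precisely to an Eulerian tour of $\tilde{\Gamma}$ in which, at every vertex, any two consecutive edges form an admissible transition --- that is, a \emph{compatible Eulerian circuit} in the sense of Kotzig--Bondy--Jackson. Jackson's theorem (\cite{bib:jackson}, Theorem 6.3) asserts that a transition system satisfying precisely the above degree bound admits a compatible Eulerian circuit, and invoking it yields the desired tour.

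If one wished to argue from scratch rather than quote Jackson, the natural strategy is induction on $|E(\tilde{\Gamma})|$: at some vertex $v$, identify a short compatible closed sub-trail, excise its edges from $\tilde{\Gamma}$, apply the induction hypothesis to each remaining Eulerian component, and splice the sub-trail back at $v$. The genuinely delicate point --- and the main obstacle --- lies in the splicing step: one must verify that, at the vertex of reinsertion, an admissible position to splice in the sub-trail is always available. This reduces to a counting argument driven by the $\le \deg_{\Gamma} v /2$ bound on the maximum degree of the forbidden graph at $v$, which ensures that strictly fewer than half of the candidate insertion positions are ruled out. Observation~\ref{obs:inter-conf} makes this bound comfortable in our setting, so no delicate tightening is required beyond what Jackson already establishes.
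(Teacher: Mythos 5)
Your proposal is correct and matches the paper's approach exactly: the paper itself gives no proof of Theorem~\ref{thm:eulerian} but simply attributes it to Jackson's Theorem 6.3 on compatible Eulerian circuits, and the dictionary you spell out (edges of $\tilde{\Gamma}$ at $v$ $\leftrightarrow$ paths in $\PP_\Gamma(v)$, conflicting pairs $\leftrightarrow$ forbidden transitions, $\conf{\Gamma}\leq 1/2$ $\leftrightarrow$ the degree bound on the forbidden-transition graph) is precisely the translation the paper leaves implicit. The brief sketch of an inductive proof is extra and not needed, since both you and the paper ultimately just invoke Jackson.
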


The theorem can be modified as follows, to include path-graphs with loop-paths.

\begin{theorem}\label{thm:c-eulerian}
Every Eulerian path-graph $\Gamma$ with $\conf{\Gamma} \leq 1/4$
has a non-conflicting Eulerian tour.
\end{theorem}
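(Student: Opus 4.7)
The plan is to reduce Theorem~\ref{thm:c-eulerian} to Theorem~\ref{thm:eulerian} by opening each loop-path into a pair of non-loop paths in such a way that the conflict structure at every vertex is preserved.

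For each loop-path $L$ based at a vertex $v$, I would introduce a fresh vertex $v_L$ and subdivide a single wiggly edge of $L$ at $v_L$, replacing $L$ by two non-loop complex paths $L_1$ (from $v$ to $v_L$) and $L_2$ (from $v_L$ to $v$) with $L = L_1 \ct L_2$. The subdivision point and the new indices are chosen according to the universality status of $L$ and $L^r$ at $v$:
\begin{itemize}
\item If $L$ is universal at $v$, its first edge is a wiggly loop at $v$; I would subdivide that edge into two new wiggly edges $e_1', e_1''$, take $L_1 = v, e_1', v_L$, and assign index $-1$ to $e_1'$ at $v$. Then $\sub{L_1}{v} = \{v\}$ and $L_1$'s initial index at $v$ is $-1$, so $L_1$ is conflict-free at $v$ in $\Gamma'$, matching that universal $L$ was conflict-free at $v$ in $\Gamma$. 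Meanwhile $L_2$ keeps the remaining edges of $L$, so its reverse $L_2^r$ has the same first three edges as $L^r$ and inherits $L^r$'s initial index and universality status at $v$.
\item If $L^r$ is universal at $v$ and $L$ is not, apply the symmetric construction on the last edge of $L$.
\item If neither orientation is universal, subdivide any interior edge far enough from both ends that $\sub{L_1}{v} = \sub{L}{v}$ and $\sub{L_2^r}{v} = \sub{L^r}{v}$, preserving the original initial indices.
\end{itemize}
At $v_L$ I would assign distinct indices (or $-1$) to the two new wiggly edges so that $L_1$ and $L_2$ are non-conflicting there, and set the lengths of the new edges to sum to $\alpha(L)$.

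Let $\Gamma'$ be the resulting path-graph. Every new vertex $v_L$ has degree exactly $2$, so $\Gamma'$ is Eulerian and has no loop-paths. At each original vertex $v$ the construction induces a bijection $\PP_\Gamma(v) \to \PP_{\Gamma'}(v)$ (the identity on non-loop paths, $L \mapsto L_1$ and $L^r \mapsto L_2^r$ on loops); by design it preserves the prefix $\sub{\cdot}{v}$ and the effective conflict-determining data (either the true initial index and universality, or the index $-1$ which plays the same role for conflicts). Hence corresponding paths have identical conflict counts at $v$, and since $\deg_{\Gamma'} v = \deg_\Gamma v$ we get $\conf{v}$ in $\Gamma'$ equal to $\conf{v}$ in $\Gamma$. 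At each new vertex $v_L$, $\conf{v_L} = 0$ trivially. Therefore $\conf{\Gamma'} \leq \conf{\Gamma} \leq 1/4 \leq 1/2$, and Theorem~\ref{thm:eulerian} produces a non-conflicting Eulerian tour $T'$ of $\Gamma'$. Since each $v_L$ has degree two, $L_1$ and $L_2$ are forced to appear consecutively in $T'$; collapsing each such pair back to $L$ in the orientation dictated by $T'$ yields an Eulerian tour $T$ of $\Gamma$ whose consecutive pairs at original vertices correspond, under the bijection, to those of $T'$, so $T$ is non-conflicting.

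The main obstacle lies in the universal-loop case. A naive subdivision would produce a short $L_1$ that is no longer universal (it fails the five-edge requirement), so if many universal loops at $v$ shared an initial index their non-universal subdivisions would mutually conflict in $\Gamma'$ and could easily push $\conf{\Gamma'}$ past $1/2$, breaking the reduction. Using the new short wiggly edge to carry the index $-1$ is the key idea that restores conflict-freeness of $L_1$ at $v$ and keeps the conflict ratio under control throughout the reduction.
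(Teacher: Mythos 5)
Your proposal is correct and takes essentially the same approach as the paper: both replace each loop-path $P$ at $v$ by two non-loop paths $P_1,P_2$ through an auxiliary degree-$2$ vertex, apply Theorem~\ref{thm:eulerian} to the resulting loop-free path-graph $\Gamma'$, and collapse the tour back (the degree-$2$ vertex forces $P_1,P_2$ to be consecutive). Where you differ is in the bookkeeping of the conflict ratio. The paper simply asserts that each of $P_1,P_2$ can be made to conflict with every path that $P$ (in either orientation) conflicted with, and concludes only that the conflict count at most doubles, whence $\conf{\Gamma'}\leq 2\cdot 1/4=1/2$; this is why the hypothesis is $1/4$ and not $1/2$. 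You instead give an explicit subdivision that aims to preserve $\conf$ exactly, and you correctly identify the one genuine subtlety that makes a naive subdivision fail: a universal loop-path cut near $v$ ceases to be universal, so its conflict count at $v$ could jump from zero to something large. Your fix (carry index $-1$ on the new short stub edge, which plays the same role as universality in the conflict test since $\sub{L_1}{v}=\{v\}$) is exactly right and is the content the paper leaves implicit. The remaining cases you list (cut an interior edge far from both ends, or the last edge when only $L^r$ is universal) are also sound, with the caveat that you assume a wiggly edge to subdivide; for loop-paths consisting only of ordinary edges, or loops too short to cut ``far from both ends,'' the subdivision can only shorten the prefixes $\sub{L_1}{v}$, $\sub{L_2^r}{v}$ and thus only remove conflicts, so nothing breaks, but your writeup glosses over this. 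In short: same reduction, a tighter and more carefully justified bound on $\conf{\Gamma'}$ that the theorem does not actually need.
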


\begin{proof}
The result will follow from applying Theorem~\ref{thm:eulerian} on an auxiliary path-graph $\Gamma'$ without loop-paths.
Let $\Gamma'$ be a path-graph obtained from $\Gamma$ by replacing each loop-path $P$ in $\Gamma$ with ends in $v$ by two paths $P_1$, $P_2$ between $v$ and an auxiliary vertex of degree $2$ such that $P_1$, $P_2$ are mutually non-conflicting and each of them is conflicting with all the paths with which the original path $P$ was conflicting. Thus, $\Gamma'$ has no loop-paths and the number of paths conflicting with each path increases at most by a factor of $2$, i.e.,  $\conf{\Gamma'}$ is at most $1/2$. Then, $\Gamma'$ has a non-conflicting Eulerian tour by Theorem~\ref{thm:eulerian}. Observe that by construction of $\Gamma'$, a non-conflicting Eulerian tour in $\Gamma'$ corresponds to an Eulerian tour in $\Gamma$ which is also non-conflicting.
\end{proof}

We now state results about path-graphs from~\cite{bib:orig-paths}. We need to slightly generalize them to suit our needs. However, we do not give full proofs of the modified statements as they are analogous to those in~\cite{bib:orig-paths}.
We call a complex path $P$ without stubs {\em strict} if every index appears in at most one edge of $P$.

\begin{theorem}[Theorem 3.3 in~\cite{bib:orig-paths} (generalized)]\label{thm:cones}
Let $\varepsilon$ be an arbitrarily small positive real number. For every positive integer $r$, there exists an integer $L_1$ such that if $G$ is a unit complex graph without stubs, loops and multiple edges (in other words, the underlying multigraph of $G$ is a simple graph) with minimum degree $L_1$ and $\iota(G)\leq r$, then there exists a strict-$\ell$-path-graph $\Gamma$ on $G$ with $\inter{\Gamma}\leq \varepsilon$, $\deg_{\Gamma} v/\deg_G v \in [\frac{1-\varepsilon}{\ell},\frac{1+\varepsilon}{\ell}]$ and $\deg_{G\setminus E(\underline{\Gamma})} v \leq \varepsilon \deg_{\Gamma} v$ for every vertex $v$ of $G$. 
\end{theorem}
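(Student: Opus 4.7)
The strategy is to adapt the probabilistic construction used in the proof of Theorem 3.3 of~\cite{bib:orig-paths}, adding a layer that controls indices in order to handle the complex setting. I would proceed in three stages. First, extract from $G$ a sparse spanning backbone $G_0$ of degree close to $\deg_G v/\ell$ at each vertex, using Lemma~\ref{lem:fraction} with a suitably chosen dyadic exponent and a mild post-processing (independent coin flips plus Chernoff) to bring the average fraction down to $1/\ell$. The edges of $G_0$ will play the role of \emph{centers} of the $\ell$-paths being built. Then, for each center edge $uv$, independently sample a random pair of walks starting from $u$ and $v$ with total length $\ell-1$: each step chooses uniformly among admissible edges (not a center edge, not already used by the same extension, and not creating an index repetition with an earlier edge of the partial path). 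Since the underlying multigraph is simple and the minimum degree is $L_1$, each step has at least $L_1-O(\ell)$ valid choices.

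Next, apply the asymmetric Lov\'asz Local Lemma (Lemma~\ref{LLL}) to avoid all bad events: (i) two distinct extensions share a non-center edge; (ii) for some vertex $w$, more than $\varepsilon \deg_G v/\ell$ of the paths starting at a fixed vertex $v$ contain $w$ in their initial segment $\sub{P}{v}$; (iii) the number of paths ending at $v$ falls outside $[(1-\varepsilon)\deg_G v/\ell,\,(1+\varepsilon)\deg_G v/\ell]$; (iv) some constructed path contains two edges sharing the same index at a common vertex, thereby violating strictness. Each bad event has probability exponentially small in $L_1$ and depends only on the random choices made inside an $O(\ell)$-neighborhood of some vertex, so its dependency degree is polynomial in $L_1$ and $\ell$. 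Standard LLL weights then give the required decomposition once $L_1$ is large enough depending on $r$, $\ell$, and $\varepsilon$. The part not used by the paths, namely $G\setminus E(\underline{\Gamma})$, is automatically controlled at each vertex by (i) and (iii), giving the last inequality of the statement.

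The main obstacle is the strictness event (iv), which is the new ingredient compared with~\cite{bib:orig-paths}. Its probability is controlled precisely through the hypothesis $\iota(G)\le r$: at any vertex $w$, at most $r$ edges incident with $w$ share a given index, so a random extension step lands on a previously forbidden index with probability at most $r\ell/(L_1-O(\ell))$, and taking $L_1\gg r\ell/\varepsilon$ keeps this contribution harmless in the LLL computation. Since an index conflict of type (iv) is confined to the edges of a single path, its dependency neighborhood factors through the same local structure as events (i)–(iii), so that once this new probability bound is established the full LLL calculation from~\cite{bib:orig-paths} carries over essentially verbatim, yielding the desired path-graph $\Gamma$.
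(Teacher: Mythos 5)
Your proposal matches the paper's approach. The paper's own proof of Theorem~\ref{thm:cones} is a one-line remark: ``The proof is analogous to the proof of Theorem 3.3 in~\cite{bib:orig-paths}, except that we need to ensure that every index appears at most once in each path. This can be done in the same way as ensuring that every vertex appears at most once in each path.'' Your reconstruction of the LLL-based argument from~\cite{bib:orig-paths}, together with the treatment of strictness as an extra bad event whose probability is controlled via $\iota(G)\le r$ in the same way that vertex-repetition is controlled via simplicity of the underlying graph, is precisely the adaptation the paper has in mind.
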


\begin{proof}
The proof is analogous to the proof of Theorem 3.3 in~\cite{bib:orig-paths}, except that we need to ensure that every index appears at most once in each path. This can be done in the same way as ensuring that every vertex appears at most once in each path.
\end{proof}

Next, we state a variant of Theorem 3.4 from~\cite{bib:orig-paths}.
\begin{theorem}\label{thm:low-conf-simple} 
For any positive integers $c$ and $r$, there exists an integer $L_2$ such that if $G$ is a unit complex graph  without stubs, loops and multiple edges with minimum degree $L_2$ and $\iota(G)\leq r$, there exists a strict-long-path-graph $\Gamma$ decomposing $G$ with $\inter{\Gamma}\leq 1/c$ and $\deg_{\Gamma} v> \deg_G v/2\ell$ for every vertex $v$ of $G$. 
\end{theorem}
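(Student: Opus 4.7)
Choose $\varepsilon_0 = 1/(10c)$ and set $L_2 := L_1(\varepsilon_0, r)$, where $L_1$ is the constant from Theorem~\ref{thm:cones}. Applying Theorem~\ref{thm:cones} to $G$ produces a strict-$\ell$-path-graph $\Gamma_0 = (V, \PP_0)$ with $\inter{\Gamma_0} \le \varepsilon_0$, $\deg_{\Gamma_0} v \in \left[\tfrac{1-\varepsilon_0}{\ell}\deg_G v,\; \tfrac{1+\varepsilon_0}{\ell}\deg_G v\right]$ for every $v$, and $\deg_R v \le \varepsilon_0 \deg_{\Gamma_0} v$, where $R := G \setminus \underline{\Gamma_0}$. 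Each path in $\PP_0$ has length $\ell$ and is in particular long, so any extension of such a path will remain long.

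The plan is to absorb the edges of $R$ into $\PP_0$ by extending paths one edge at a time. Initialize $\PP := \PP_0$ and process the edges of $R$ in arbitrary order: for $e = uv \in R$, pick a path $P$ currently in $\PP$ that ends at $u$ such that $P \ct e$ is a valid strict complex path, and replace $P$ by $P \ct e$ in $\PP$. Restrict each path to be extended at most once at each end, so that every final path in $\PP$ is obtained from some $P_0 \in \PP_0$ by appending at most one edge of $R$ at each end. Such a choice of $P$ exists at each step: the number of candidate paths in $\PP$ ending at $u$ is at least $\deg_{\Gamma_0} u - 2\deg_R u \ge (1 - 3\varepsilon_0)\deg_G u/\ell$, while the number of paths forbidden---either by an index-clash with $e$ at $u$ (bounded by $r$, using $\iota(G)\le r$) or because the opposite end of the candidate is $v$ (at most one)---is at most $r + 1$, which is negligible for $L_2$ large.

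The two required conclusions then follow. For the degree bound, each edge of $R$ incident with $v$ contributes $\pm 1$ to $\deg_\Gamma v - \deg_{\Gamma_0} v$, hence $|\deg_\Gamma v - \deg_{\Gamma_0} v| \le \deg_R v \le \varepsilon_0 \deg_{\Gamma_0} v$, so $\deg_\Gamma v \ge (1-\varepsilon_0)^2 \deg_G v/\ell > \deg_G v/(2\ell)$. For the intersection ratio, fix $v$ and a neighbor $w$, and count paths $P \in \PP_{\Gamma}(v)$ with $w \in \sub{P}{v}$. Case (a): the $v$-end of $P$ was inherited unchanged from $\PP_0$, so $\sub{P}{v}$ equals $\sub{P_0}{v}$ for the underlying $P_0$; this contributes at most $\varepsilon_0 \deg_{\Gamma_0} v$ by $\inter{\Gamma_0} \le \varepsilon_0$. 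Case (b): $P = P_0 \ct uv$ for some $uv \in R$, and $\sub{P}{v}$ consists of $v$, the edge $uv$, and an $(\ell-1)$-prefix of $P_0^r$ starting at $u$; then either $w = u$, giving at most one such path, or $w$ lies in the prefix, giving at most one path per edge of $R$ incident with $v$, hence at most $\deg_R v \le \varepsilon_0 \deg_{\Gamma_0} v$ in total. Summing these and dividing by $\deg_\Gamma v$ yields $\inter{v} \le 1/c$.

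The main obstacle is the bookkeeping of case (b) in the intersection count. If paths were allowed to be extended arbitrarily many times at the same end, $\sub{P}{v}$ could traverse a long cascade of $R$-edges and a single vertex $w$ could appear as an interior vertex of many distinct extension trajectories, blowing up the intersection count. Restricting to one extension per end keeps $\sub{P}{v}$ of length at most $\ell$, consisting of one $R$-edge followed by an $(\ell-1)$-prefix of $P_0^r$, so that the intersection bound transfers from $\Gamma_0$ to $\Gamma$ with only a constant-factor loss and strictness is preserved using the bound $\iota(G)\le r$.
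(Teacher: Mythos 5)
Your overall plan is sensible (get a low-intersection strict $\ell$-path-graph from Theorem~\ref{thm:cones} and absorb the small uncovered remainder $R$ by greedily lengthening paths at most once per end), and this is roughly the spirit of the paper's argument; the paper, however, simply defers to the proof of Theorem~3.4 in~\cite{bib:orig-paths} and notes that one should work with a $1/(2c+1)$-fraction $G_1$ of $G$, so you are not reconstructing that proof but proposing a somewhat different one. Unfortunately the greedy step as you have written it has two genuine gaps.

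First, the bound ``$r$'' for the number of paths forbidden by an index clash is not justified when the absorbed edge $e=uv$ is wiggly, and you never address strictness at all. Condition (vii) for $P\ct e$ requires that the terminal index of $P$ at $u$ differ from $i^e_u$ (or be $-1$); but the terminal index of a complex path at $u$ is the \emph{last index in its index sequence}, i.e.\ the index of the wiggly edge of $P$ nearest $u$ at the endpoint of that edge closer to $u$ — that vertex need not be $u$, so $\iota_G(u,\cdot)\le r$ does not bound how many $u$-ending paths carry a given terminal index. Worse, to keep $P\ct e$ \emph{strict} you additionally need both indices of $e$ to be absent from $P$ entirely, and the number of $u$-ending paths containing some fixed index $i$ somewhere along them is not controlled by $r$ at all (that would require a bound on occurrences of $i$ across the whole graph, not just at $u$). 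A clean repair is to never extend with wiggly edges of $R$: an ordinary edge has no index, so (vii) and strictness are automatic, and each wiggly edge of $R$ can simply be added to $\Gamma$ as a one-edge path, which is both long and trivially strict. This does not hurt your degree bound (it only increases $\deg_\Gamma v$) and it does not contribute to intersections, since $\sub{P_e}{v}=\{v\}$ for a one-wiggly-edge path.

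Second, the reason you give for forbidding some paths is the wrong one. ``Opposite end of the candidate is $v$'' is not actually a problem: if $P$ has $\ell$ unit edges and both ends in $\{u,v\}$, then $P\ct uv$ has $\ell+1$ edges between the two occurrences of $v$, which already satisfies condition (vi) (``more than $\ell$ ordinary edges''). The constraint you must forbid is $v\in\sub{P}{u}\setminus\{u\}$ (with no intervening wiggly edge), and the number of such $P\in\PP_{\Gamma_0}(u)$ is bounded by $\inter{\Gamma_0}\deg_{\Gamma_0}u\le\varepsilon_0\deg_{\Gamma_0}u$, not by $1$. Fortunately this is still small compared to the $(1-O(\varepsilon_0))\deg_{\Gamma_0}u$ available ends, so once you replace ``$r+1$'' by ``$O(\varepsilon_0)\deg_{\Gamma_0}u$'' the greedy choice still exists; your final intersection-ratio computation in Cases (a)–(b) is then essentially right. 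But as written, both of the quantitative claims in the availability argument are incorrect, and the strictness requirement for wiggly extensions is unaddressed.
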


\begin{proof}
The proof is analogous to the proof of Theorem~3.4 in~\cite{bib:orig-paths}, using Theorem~\ref{thm:cones} instead of Theorem 3.3 in~\cite{bib:orig-paths}. To obtain $\inter{\Gamma}\leq 1/c$ instead of $\inter{\Gamma}\leq 1/4$, it is sufficient to slightly modify constants in the proof. In particular, to choose $G_1$ to be some $1/(2c+1)$-fraction of $G$.
\end{proof}

Note that Theorem~\ref{thm:low-conf-simple} is stated only for unit complex graphs with simple underlying graphs. Therefore, we cannot apply it directly. Nevertheless, we can use it to prove a slightly weaker variant for all complex graphs.

\begin{theorem}\label{thm:low-conf-complex}
For any positive integers $c$ and $r$, there exists an integer $L$ such that for every complex graph $G$ without stubs, with minimum degree at least $L$ and $\iota(G)\leq r$, there is a long-path-graph $\Gamma$ decomposing $G$ with $\inter{\Gamma}\leq 1/c$ and $\deg_{\Gamma} v> \deg_G v/2\ell$ for all vertices $v$.
\end{theorem}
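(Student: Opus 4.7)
The plan is to reduce Theorem~\ref{thm:low-conf-complex} to Theorem~\ref{thm:low-conf-simple} by handling wiggly edges separately. Split the edges of $G$ into its ordinary subgraph $G_o$ (which, by the definition of a complex graph, is a unit simple complex graph without loops or multiple edges) and its wiggly subgraph $G_w$. For a vertex $v$, write $O_v$ for its ordinary degree and $W_v$ for its wiggly degree, so $\deg_G v = O_v + W_v$. Each wiggly edge of $G_w$ is, on its own, a long complex path (it contains a wiggly edge), so take $\Gamma_w$ to consist of one single-edge path per wiggly edge; then $\deg_{\Gamma_w}(v)=W_v$, and $\inter{\Gamma_w}=0$ because $\sub{P}{v}=\{v\}$ for every $P\in\PP_{\Gamma_w}(v)$.

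If $G_o$ had minimum degree at least $L_2(c,r)$, Theorem~\ref{thm:low-conf-simple} applied to $G_o$ would give a long-path-graph $\Gamma_o$ decomposing $G_o$ with $\inter{\Gamma_o}\le 1/c$ and $\deg_{\Gamma_o}(v)>O_v/(2\ell)$; then $\Gamma:=\Gamma_o\cup\Gamma_w$ would decompose $G$ and satisfy
\[
\deg_\Gamma(v)=\deg_{\Gamma_o}(v)+W_v>\tfrac{O_v}{2\ell}+W_v\ge\tfrac{O_v+W_v}{2\ell}=\tfrac{\deg_G(v)}{2\ell}
\]
together with $\inter{\Gamma}\le 1/c$, since wiggly single-edge paths contribute nothing to the intersection ratio. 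So it suffices to produce a suitable long-path decomposition of (almost all of) the ordinary part.

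The obstacle is that $G_o$ may have vertices of small ordinary degree, so Theorem~\ref{thm:low-conf-simple} does not apply to $G_o$ directly. Call $v$ \emph{light} if $O_v<\deg_G v/4$ and \emph{heavy} otherwise; a light vertex has $W_v>3\deg_G v/4$, so the single-edge wiggly paths alone already contribute far more than $\deg_G v/(2\ell)$ to $\deg_\Gamma(v)$, even after a few wiggly edges are removed in the step below, so no ordinary-path endpoint is needed at $v$. For each ordinary edge $uv$ with $v$ light, absorb it into a long two-edge path $u\ct v\ct w$ by attaching a previously unused wiggly edge $vw$ at $v$; this is feasible because $W_v$ exceeds $O_v$ by a large factor at light vertices. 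The residual ordinary edges now form a subgraph $G_o'$ supported on the heavy vertices. The hard part is to ensure $G_o'$ has minimum degree at least $L_2$ at every remaining heavy vertex: the residual ordinary degree of a heavy $u$ equals the number of its ordinary neighbors that are themselves heavy, which may a priori be much smaller than $O_u\ge L/4$. I handle this by iteratively peeling off any heavy vertex lacking $L_2$ heavy ordinary neighbors, reclassifying it as light and absorbing its remaining ordinary edges exactly as above; with $L$ chosen sufficiently large relative to $L_2$ and $\ell$, and a careful count (or a Lovász Local Lemma argument via Lemma~\ref{LLL}) ensuring that each peeled vertex retains enough unused wiggly edges to carry out all its absorptions, the process terminates with a stable core to which Theorem~\ref{thm:low-conf-simple} applies. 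The desired $\Gamma$ is then the union of the resulting core long-path-graph, the absorption two-edge paths, and the still-unused single-edge wiggly paths.
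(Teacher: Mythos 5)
Your approach is genuinely different from the paper's: the paper replaces each wiggly edge $e$ by a clique gadget $H_e$ joined to the ends of $e$ by two strict-index unit wiggly edges, producing a simple unit graph of high minimum degree to which Theorem~\ref{thm:low-conf-simple} applies directly; the traversal through each gadget is then re-spliced into a single long path across $e$. Your proposal instead separates the ordinary and wiggly parts and tries to repair the possibly low-degree ordinary part $G_o$ by an absorption/peeling scheme. The first phase (absorbing $uv$ at a light $v$ via $u\ct e_{uv}\ct v\ct e\ct w$, and the degree/intersection accounting at light and heavy vertices) is plausible. But the peeling step has a genuine gap.

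The gap is that a peeled vertex need not have any wiggly edges to absorb into. A ``heavy'' vertex $u$ only satisfies $O_u\ge\deg_G u/4$; this is consistent with $W_u=0$. If such a $u$ has fewer than $L_2$ heavy ordinary neighbours it must be peeled, and it then has up to $L_2-1$ residual ordinary edges that your recipe would absorb via wiggly edges at $u$ --- but there are none, and the other endpoints may equally have $W=0$. Concretely, let $A$ be a clique of size $m\le L_2$, give each $a\in A$ an additional $L-m+1$ ordinary edges to pairwise-distinct vertices in a large set $B$, and give each $b\in B$ exactly one ordinary edge (to $A$) together with $(L-1)/2$ wiggly loops, all indices $-1$. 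Every $a\in A$ is heavy with $W_a=0$ and has only $m-1<L_2$ heavy ordinary neighbours, so the whole of $A$ peels, and the $\binom{m}{2}$ ordinary edges inside $A$ can be absorbed neither at their own ends (no wiggly edges) nor at $B$ (those ordinary edges to $B$ were already consumed). No ``careful count'' or Lov\'asz Local Lemma invocation can manufacture wiggly edges where there are none, so the claimed invariant ``each peeled vertex retains enough unused wiggly edges'' is simply false in general. An additional structural worry, secondary to the above, is that peeling a vertex decrements the heavy-degree of its heavy neighbours, so the process can cascade and empty the core entirely (as in the example), leaving the internal ordinary edges of the peeled region unaccounted for. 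The paper's gadget construction avoids both problems because the auxiliary cliques supply the needed ordinary degree uniformly, independent of how wiggly degree is distributed in $G$.
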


\begin{proof}
Let $L$ be greater than $L_2$ obtained from Theorem~\ref{thm:low-conf-simple} applied to $r$ and $c$ as in our hypothesis. Let $G^*$ be the complex graph obtained from $G$ by replacing each wiggly edge $e$ with endpoints $v_1$,$v_2$ by a gadget $H_{e}$ consisting of a clique of size $L+1$ and unit wiggly edges $e_1$, $e_2$ with endpoints $v_1,v_2'$ and $v_1,'v_2$, respectively, where $v_1'$ and $v_2'$ are (arbitrary) distinct vertices of the clique. The edges $e_1$ and $e_2$ have the same indices as $e$, i.e., they have index $i^e_1$ at $v_1$ and $v_1'$ and index $i^e_2$ at $v_2$ and $v_2'$. All the remaining edges of the gadget are ordinary. See Figure~\ref{fig:gadget} for an illustration.

\begin{figure}
\begin{center}
\includegraphics[scale=1]{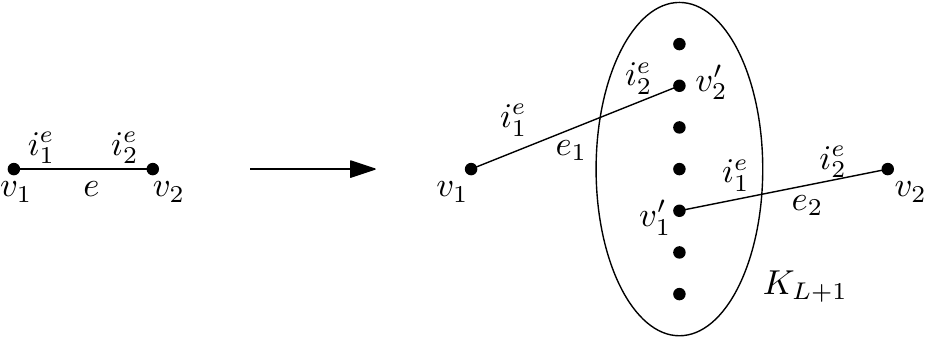}
\caption{Replacing an edge $e$ by a gadget $H_{e}$.}
\label{fig:gadget}
\end{center}
\end{figure}

Note that the resulting complex graph $G^*$ is unit, has minimum degree at least $L$ and the underlying graph of $G^*$ is simple. By applying Theorem~\ref{thm:low-conf-simple} with $c$ and $r$, we obtain a strict-long-path-graph $\Gamma^*$ decomposing $G^*$ with $\inter{\Gamma^*}\leq 1/c$ and $\deg_{\Gamma^*} v > \deg_{G^*} v/2\ell$ for every vertex $v$ of $G^*$. 

Let $H_e$ be the gadget corresponding to the wiggly edge $e$ with endpoints $v_1$ and $v_2$. Since $e_1$ and $e_2$ have the same indices, they cannot occur in the same strict path of $\Gamma^*$. Thus, there are two different paths $P_1$ and $P_2$ in $\Gamma^*$ such that $e_1\in P_1$ and $e_2\in P_2$. Moreover, since $e_1$ and $e_2$ form a $2$-edge-cut in $G^*$, $P_1$ and $P_2$ have exactly one endpoint in $H_e\setminus \{v_1,v_2\}$ each. Let $P'_1= P_1\setminus (H_e\setminus \{v_1,v_2\})$, that is, $v_1$ is an end of $P'_1$. We define $P'_2$ analogously. Then, $P_e=P'_1\ct e \ct P'_2$ is a long path. 

If we replace $P_1$, $P_2$ in $\Gamma^*$ by $P_e$ for every gadget $H_e$, we obtain the long-path-graph $\Gamma$ which is a decomposition of $G$. Moreover, $\deg_{\Gamma} v=\deg_{\Gamma^*} v$ and $\inter{v}$ is the same in $\Gamma$ and $\Gamma^*$ for every $v\in V(\Gamma)$. It follows that $\Gamma$ satisfies the statement of the lemma.
\end{proof}

Note that a path-graph resulting from Theorem~\ref{thm:low-conf-complex} is not guaranteed to be connected.

Now we introduce the notion of {\em rainbow structures} in complex graphs. These structures are later used for constructing collections or edge-disjoint paths with specified properties.

Let $G$ be a complex graph without stubs. A {\em rainbow $(k,m,\delta,\varepsilon)$-structure} $\GG$ in $G$ is a collection of $k$ pairwise edge-disjoint spanning subgraphs $G_1,\ldots, G_k$ of $G$ such that
\begin{itemize}
\item $\deg_{G_1} v \geq 2(\delta+\varepsilon) \deg_{G} v+2m+1$ and 
\item $\deg_{G_{i+1}} v\geq 2\varepsilon \deg_{G} v + \deg_{G_{i}} v+2i+2m$
\end{itemize}
for every $v\in V(G)$ and $i\in [k-1]$. Abusing notation, we will not always distinguish between $\GG$ and $\bigcup_{i\in [k]} G_i$.

\begin{observation}\label{obs:e-rainbow}
For every positive integers $k$, $m$ and positive reals $\delta,\varepsilon\leq 2^{-(k+3)}$, every complex graph $G$ without stubs and with minimum degree at least $2^{k+2}(k+m+3)$ has a $(k,m,\delta,\varepsilon)$-rainbow structure.
\end{observation}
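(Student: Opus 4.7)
The plan is to apply Lemma~\ref{lem:fraction} directly, choosing a geometrically increasing sequence of fraction sizes. Set $m_i := k - i + 1$ for $i \in [k]$. These satisfy $\sum_{i=1}^{k} 2^{-m_i} = \sum_{j=1}^{k} 2^{-j} = 1 - 2^{-k} \leq 1$, so Lemma~\ref{lem:fraction} yields pairwise edge-disjoint spanning subgraphs $G_1, \ldots, G_k$ of $G$, with $G_i$ a $2^{i-k-1}$-fraction of $G$. By the definition of an $\varepsilon$-fraction, this means $|\deg_{G_i} v - 2^{i-k-1} \deg_G v| \leq 2$ for every vertex $v$.

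It then remains to verify the two defining inequalities of a rainbow $(k, m, \delta, \varepsilon)$-structure. For the first, $\deg_{G_1} v \geq 2^{-k} \deg_G v - 2$, while the target is $2(\delta + \varepsilon) \deg_G v + 2m + 1$. Since $\delta + \varepsilon \leq 2 \cdot 2^{-(k+3)} = 2^{-(k+2)}$, the surplus is at least $(2^{-k} - 2^{-(k+1)}) \deg_G v - (2m + 3) = 2^{-(k+1)} \deg_G v - (2m+3)$, which is nonnegative by the hypothesis $\deg_G v \geq 2^{k+2}(k+m+3)$. For the second inequality, the two-sided control on each $\deg_{G_i} v$ gives $\deg_{G_{i+1}} v - \deg_{G_i} v \geq 2^{i-k-1} \deg_G v - 4$, and the target surplus over $\deg_{G_i} v$ is $2\varepsilon \deg_G v + 2i + 2m$. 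Because $2\varepsilon \leq 2^{-(k+2)}$ and $i \geq 1$, we have $2^{i-k-1} - 2\varepsilon \geq 2^{-k} - 2^{-(k+2)} \geq 2^{-(k+1)}$, so it is enough to check $2^{-(k+1)} \deg_G v \geq 2i + 2m + 4$ for each $i \leq k-1$. The minimum degree assumption yields $2^{-(k+1)} \deg_G v \geq 2(k + m + 3) = 2k + 2m + 6 \geq 2(k-1) + 2m + 4$, as required.

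The argument is essentially a bookkeeping verification, so there is no real obstacle. The only point worth care is arranging that the gaps $2^{-m_{i+1}} - 2^{-m_i}$ strictly dominate $2\varepsilon$ while keeping the total mass $\sum 2^{-m_i}$ at most one. The geometric choice $m_i = k - i + 1$ achieves this, since each gap equals $2^{i-k-1} \geq 2^{-k}$, comfortably larger than $2\varepsilon \leq 2^{-(k+2)}$, and the mass telescopes to $1 - 2^{-k}$.
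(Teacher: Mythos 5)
Your proof is correct and follows essentially the same route as the paper's: both apply Lemma~\ref{lem:fraction} with the geometric choice of fraction sizes $2^{-(k-i+1)}$ and verify the two rainbow inequalities by bookkeeping, the only difference being that you work directly from the two-sided degree control of each fraction while the paper phrases the step from $G_i$ to $G_{i+1}$ recursively.
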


\begin{proof}
Let $G_1$, $G_2$, $G_3,\ldots, G_{k}$ be pairwise edge-disjoint subgraphs of $G$ such that $G_i$ is a $2^{-(k-i+1)}$ fraction for every $i\in [k]$.
Such fractions exist by Lemma~\ref{lem:fraction}. 
 We show that $G_1,\ldots, G_k$ form a $(k,m,\delta,\varepsilon)$-rainbow structure. We have that $\deg_{G_{i+1}} v \geq 2 \deg_{G_i} v -4$ for every $i\in [k-1]$ and $\deg_{G_1} v \geq \deg_{G} v/2^{k} -2$ for every $v$. (Thus, in particular, if $\deg_{G_1} v \geq 4$, $\deg_{G_i} v\geq \deg_{G_1} v$ for every $i$.)

Since the degree of every vertex $v$ is at least $2^{k+2}(k+m+3)$, 
\[\frac{\deg_{G_1} v}{2} \geq \frac{2^{k+2}(k+m+3)}{2^{k+1}}-1\geq 2k + 2m + 5.\] 
Note that $2(\delta+\varepsilon)\deg_G v \leq \deg_G v/2^{k+1}\leq \deg_{G_1} v /2+1$. 
Thus, $\deg_{G_1} v \geq 2(\delta+\varepsilon)\deg_G v + 2k + 2m + 4$. Consequently, 
\begin{align*}
\deg_{G_{i+1}} v &\geq 2 \deg_{G_i} v -4\\
&\geq \deg_{G_i} v +\deg_{G_1} v -4\\
&\geq \deg_{G_i} v + 2\varepsilon\deg_G v + 2k + 2m
\end{align*}
for every $i\in [k-1]$.
\end{proof}

Next, we show how a $(k,m,\delta,\varepsilon)$-rainbow structure can be used for constructing edge-disjoint complex paths. For that, we need the following observation about the existence of an {\em almost balanced orientation} of a  complex graph. An {\em orientation} of a complex graph without stubs is defined analogously to an orientation of multigraphs, i.e., it is an assignment of a direction (that is, an ordering of the vertices) to each edge. For a given orientation $D$, the {\em indegree} and the {\em outdegree} of a vertex $v$, denoted by $\deg_D^- v$, $\deg_D^+ v$, respectively, is the number of edges incident with $v$ that are directed towards $v$ or away from $v$ in $D$, respectively.

\begin{observation}\label{obs:balor}
Every complex graph $G$ without stubs has an orientation such that $|\deg^{-} v-\deg^{+} v|\leq 1$ for every vertex $v$. We call such orientation {\em almost balanced}.
\end{observation}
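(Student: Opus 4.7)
The plan is to reduce to the classical Eulerian-orientation argument applied to the underlying multigraph. Let $M$ be the underlying multigraph of $G$: each ordinary edge and each wiggly edge of $G$ (with indices and lengths discarded) becomes a single edge of $M$, with loops remaining loops. The complex degree $\deg_G v$ coincides with the multigraph degree $\deg_M v$, because the absence of stubs means each edge/loop contributes to $\deg_G v$ in the same way it contributes to $\deg_M v$ (a loop at $v$ counts twice in both). So it suffices to orient $M$ so that $|\deg^-_M v - \deg^+_M v| \le 1$ for every $v$, and then transfer the orientation back to $G$ edge-by-edge.

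To orient $M$, introduce an auxiliary vertex $s$ and add a new edge from $s$ to each odd-degree vertex of $M$. The number of odd-degree vertices is even, so in the resulting multigraph $M'$ every vertex has even degree. Hence each connected component of $M'$ decomposes into an Eulerian circuit (or is a single isolated vertex). Fix a direction of traversal for each such Eulerian circuit and orient every edge of $M'$ according to its traversal direction.

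In this orientation every vertex of $M'$ is perfectly balanced, i.e.\ $\deg^-_{M'} v = \deg^+_{M'} v$: each time the Eulerian circuit visits $v$ it contributes one incoming and one outgoing incidence, and a loop at $v$, being traversed once, likewise contributes one to each. Now restrict the orientation to $M$ by forgetting the auxiliary edges incident with $s$. For an even-degree vertex $v$ of $M$ no auxiliary edge was attached, so the balance is preserved exactly; for an odd-degree vertex $v$ of $M$ exactly one auxiliary edge was removed, so $|\deg^-_M v - \deg^+_M v|$ changes by at most one and thus is at most one.

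Finally, transport this orientation back to $G$ by orienting each ordinary edge, each wiggly non-loop and each loop of $G$ in the same direction as its image in $M$. Since degrees, indegrees and outdegrees agree between $G$ and $M$ (loops produce one incoming and one outgoing incidence in both), the orientation satisfies $|\deg^-_G v - \deg^+_G v| \le 1$ for every vertex $v$. There is essentially no obstacle here; the only small point to check is the bookkeeping for loops, which is handled uniformly by the fact that an oriented loop contributes exactly one to each of $\deg^-$ and $\deg^+$.
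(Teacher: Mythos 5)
Your proof is correct and takes essentially the same approach as the paper: add an auxiliary vertex joined to all odd-degree vertices, take Eulerian tours of the resulting even-degree (multi)graph, orient edges along the tours, and delete the auxiliary vertex. The extra bookkeeping you include about passing through the underlying multigraph and about loops contributing one to each of indegree and outdegree is fine but not a substantive departure from the paper's argument.
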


\begin{proof}
Let $G'$ be a complex graph obtained from $G$ by adding a vertex $v$ which is adjacent to all vertices of odd degree in $G$. Since the number of vertices of odd degree in $G$ is even, all vertices have even degree in $G'$. Let $D'$ be an Eulerian orientation of $G'$ (that is, we consider an Eulerian tour for each connected component of $G'$ and orient the edges according to these Eulerian tours). Consider the orientation $D$ of the edges of $G$ as in $D'$. Since $\deg^{-}_{D'} v=\deg^{+}_{D'} v$ for every vertex $v$, we have $|\deg^{-}_{D} v-\deg^{+}_{D} v|\leq 1$.
\end{proof}

\begin{lemma}\label{lem:rainbow}
Let $q$ be a positive integer and let $G$ be a $\varepsilon/q$-free complex graph without stubs. Let $\GG$ be a $(k,m,\delta,\varepsilon)$-rainbow structure in $G$. For every $v$ in $G$, let $n_v=\lceil\delta \deg_G v\rceil$, let $M^v_i$ be a set of vertices different from $v$ of size at most $m$ and let $\JJ^v_i$ be a set of indices different from $-1$ with $|\JJ^v_i|\leq q$ for every $i\in[n_v]$.

Then, for every $v$ of $G$, one can construct a collection of $n_v$ edge-disjoint complex paths $\PP_v=\{P^v_1,\ldots, P^v_{n_v}\}$ in $\GG$ starting in $v$, each containing $k$ edges, such that $\sub{P^v_i}{v}$ avoids vertices of $M^v_{i}$ and the initial index of $P^v_i$ is not in $\JJ^v_i$. Moreover, the sets $E(\PP_v)$ are edge-disjoint and the paths can be constructed one by one in an online manner. That is, we can construct the paths in any prescribed order, without considering any subsequent paths (in particular, without knowledge of $M^v_{i}$ and $\JJ^v_i$ for any subsequent path $P^v_i$). 
\end{lemma}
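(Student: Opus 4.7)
The plan is to build the paths greedily online in the prescribed order, extending each path one edge at a time, with the $j$-th edge of every path drawn from the layer $G_j$ of the rainbow structure. The proof reduces to a local availability check: at every extension step from a current vertex $u$ through layer $j$, I must verify that some edge of $G_j$ incident to $u$ is still unused and respects all active constraints (index constraint from $\JJ^v_i$ or from the previous edge, vertex constraint from $M^v_i$ while the initial ordinary segment lasts, and the vertex-repetition and edge-repetition clauses of the complex-path definition).

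The setup is to fix in advance an almost balanced orientation $D_j$ of each $G_j$ via Observation~\ref{obs:balor}. All extensions will follow $D_j$, so the $j$-th edge of any path leaves its current vertex along a $D_j$-out-edge. Two inductive invariants are then preserved throughout the construction:
(i) for every $u$ and every $j \ge 1$, $N_j(u) := |\{\text{paths built so far with } v_j = u\}| \le \tfrac12 \deg_{G_j}(u) + 1$, because each such path consumes a distinct $D_j$-in-edge at $u$; the base case $N_0(u) = n_u \le \delta \deg_G u + 1$ is bounded by $\tfrac12 \deg_{G_1}(u)$ via the rainbow base inequality $\deg_{G_1}(u) \ge 2(\delta+\varepsilon)\deg_G u + 2m + 1$;
(ii) the number of $D_j$-out-edges at $u$ consumed so far is at most $N_{j-1}(u)$, hence at most $\tfrac12 \deg_{G_{j-1}}(u) + 1$ (or $n_u$ for $j = 1$) by invariant (i) at the previous layer.

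The availability check then comes down to comparing the number of unused out-edges at $u$, which is at least $\tfrac12(\deg_{G_j}(u) - \deg_{G_{j-1}}(u)) - 2 \ge \varepsilon \deg_G u + (j-1) + m - 2$ by the rainbow growth inequality, against the number of forbidden out-edges $B_j(u) \le \tfrac{q+1}{q}\varepsilon \deg_G u + m + O(\ell)$ bounded via $\varepsilon/q$-freeness (each of the at most $q+1$ forbidden index classes contributes $\le \iota(G) \le \tfrac{\varepsilon}{q}\edeg_G u$ edges), together with at most $m$ ordinary edges into $M^v_i$ and the $O(\ell)$ recently-visited-vertex exclusions. Since $G$ has no stubs and no hyperedges, $\edeg_G u = \deg_G u$, so the slack $\varepsilon \deg_G u + (j-1) + m$ dominates $B_j(u)$ once $q$ is large enough relative to the absorbed $O(\ell)$ term; the base case $j = 1$ is handled by the dedicated extra slack in the rainbow base inequality, which is exactly wide enough to accommodate both the $n_u$ departures from $u$ and the $B_1(u)$ forbidden edges at $u$.

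The main obstacle is this tight numerical balance: using the orientation $D_j$ halves both the supply of usable edges and the slack provided by the rainbow growth, so the recursive definition $\deg_{G_{i+1}} v \ge \deg_{G_i} v + 2\varepsilon\deg_G v + 2i + 2m$ must carry a factor-of-two buffer at every layer. The $2\varepsilon \deg_G u$ is calibrated to absorb the index-class constraints, the $2m$ absorbs the vertex constraints from $M^v_i$, and the linearly growing term $2i$ absorbs the accumulating $O(\ell)$ path-validity overhead as paths get longer. The $\varepsilon/q$-free hypothesis is precisely what caps the forbidden-index count at a linear function of $\varepsilon \deg_G u$, making the entire local availability argument close, and the online nature is preserved because the extension rule depends only on previously consumed edges and on the constraints $\JJ^v_i, M^v_i$ carried by the current path.
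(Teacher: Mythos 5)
Your approach — greedy online extension with the $j$-th edge of every path drawn from $G_j$, almost balanced orientations $D_j$ of each layer, and the invariant that the number of paths reaching $u$ after $j$ edges is at most $\deg^-_{D_j}(u)$ — is exactly the paper's proof strategy. But your accounting does not close as written, for two reasons.

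First, the forbidden-index count should be at most $q\cdot\iota(G)\le\varepsilon\deg_G u$, not $\tfrac{q+1}{q}\varepsilon\deg_G u$. At any extension step exactly one of the two index constraints is active: before the first wiggly edge the path's terminal index is $-1$ and only the $\JJ^v_i$-avoidance (at most $q$ values) constrains the next wiggly edge; after the first wiggly edge the initial index is already determined and only the single terminal-index constraint from condition (vii) applies. They never combine to $q+1$ classes. With the spurious $\tfrac{q+1}{q}$ factor, your slack is short by $\tfrac1q\varepsilon\deg_G u$, a quantity that grows with $\deg_G u$ and is not dominated by any other term. Your proposed remedy, to ``take $q$ large enough,'' cannot work: $q$ is a universally quantified input of the lemma, and the statement must hold for $q=1$.

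Second, the vertex-exclusion count at step $j$ is the number of previously visited vertices plus $|M^v_i|$, i.e.\ at most $(j-1)+m$, not ``$m+O(\ell)$.'' The linearly growing term $2i$ in the rainbow inequality $\deg_{G_{i+1}}v\ge 2\varepsilon\deg_G v+\deg_{G_i}v+2i+2m$ contributes $i$ after halving and is precisely calibrated to absorb those $j-1$ visited-vertex exclusions (plus the small additive losses from the orientation roundings), while $2m$ absorbs $M^v_i$. Treating the visited-vertex exclusions as a $j$-independent $O(\ell)$ constant discards exactly the quantity the growth term was designed to pay for, and makes your slack $(j-1)-2-O(\ell)-\tfrac1q\varepsilon\deg_G u$ negative for small $j$. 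With both corrections — the index bound $\varepsilon\deg_G u$ and the visited-vertex bound $j-1$ — the arithmetic closes, matching the paper's computation.
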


\begin{proof}
By Observation~\ref{obs:balor}, there exists an almost balanced orientation $D_j$ of $G_j$ for every $j\in [k]$. We construct a path $P^v_i$ such that its $j$-th edge belongs to $G_j$ and is oriented from the $j$-th to the $j+1$-th vertex of $P^v_i$ in $D_j$. 
We start with $P^v_j$ consisting only of the vertex $v$ and repeatedly extend it.

Since $v$ has outdegree at least $(\delta + \varepsilon) \deg_{G} v+m$ in $D_1$, after constructing less than $\delta \deg_G v$ paths starting from $v$, there are at least $\varepsilon \deg_{G} v + m+1$ edges leaving $v$ in $D_1$ which were not used by the previous paths. Since $G$ is $\varepsilon/q$-free, at most $\varepsilon \deg_G v$ of these edges have index in $\JJ^v_i$ at $v$. If there is a wiggly edge among the (at least) $m+1$ remaining edges, it can be used to extend $P^v_i$, trivially satisfying that $\sub{P^v_i}{v}\cap M^v_i=\emptyset$. Otherwise, since the ordinary edges form a simple graph, there is an ordinary edge incident with $v$ such that its other endpoint does not belong to $M^v_i$ and therefore it can be used to extend $P^v_i$.

By an analogous reasoning we get that the path $P^v_i$ of length $j$ with last vertex $u$ can be extended by an edge leaving $u$ in $D_{j+1}$ that was not used by any of the previous paths. It is enough to notice that $\deg^-_{D_{j}} u+ j+m+\varepsilon \deg_G u \leq \deg^+_{D_{j+1}} u$. Then, out of $\deg^+_{D_{j+1}}  u$ edges leaving $u$, at most $\deg^-_{D_{j}} u-1$ were used by the previously constructed paths, leaving at least $\varepsilon\deg_G u+j+m+1$ possible edges for extending $P^v_i$. At most $\varepsilon \deg_G u$ of these edges have the same index as the terminal index of $P^v_i$  (unless it is $-1$), or have index in $\JJ^v_j$ if $P^v_i$ contains only ordinary edges. Then, as before, either one of at least $j+m+1$ remaining edges incident with $v$ is wiggly, or its other endpoint is disjoint from $M^v_i$ and from all the vertices already contained in $P^v_i$. Such an edge can be used for extending $P_i^v$.

\end{proof}

Later, we will also use Lemma~\ref{lem:rainbow} for constructing paths shorter than what is specified by the parameter of the structure. Observe that we can simply take only the initial subpaths of the paths constructed in the lemma.

In the following two lemmas, we describe construction of spanning $1$-path-trees and $\ell$-path-trees.

\begin{lemma}\label{lem:one-tree}
There exists $\varepsilon_1>0$ such that for any $\varepsilon_1$-free unit complex graph $G$ without stubs and any complex spanning tree $T$ on the vertex set $V(G)$ (that is, the underlying graph of $T$ is a spanning tree on $V(G)$), edge-disjoint from $G$ and such that $\deg_T v\leq \varepsilon_1 \deg_G v$ for every $v$, there exists a spanning $1$-path-tree on $T\cup G$.
\end{lemma}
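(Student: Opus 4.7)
The plan is to construct, for each edge $e=uv$ of $T$, a complex $1$-path $P_e$ with endpoints $u$ and $v$ that contains $e$, in such a way that the $P_e$'s are pairwise edge-disjoint inside $T\cup G$. Since the wiggly-edge $e_{P_e}$ associated to each $P_e$ joins the same two endpoints as $e$, the family $\tilde\Gamma=\{e_{P_e}\}_{e\in E(T)}$ is a tree isomorphic to the underlying tree of $T$, so $\Gamma=(V,\{P_e\})$ will indeed be a spanning $1$-path-tree on $T\cup G$. When $\alpha(e)\equiv 1\pmod\ell$ we simply take $P_e:=e$; otherwise we set $P_e:=L_e\circ e$, where $L_e$ is a loop-path at $u$ entirely inside $G$ of length $k_e\in[2\ell,3\ell-1]$ with $k_e\equiv 1-\alpha(e)\pmod\ell$. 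Choosing $k_e\ge 2\ell$ guarantees that the two occurrences of $u$ in $P_e$ are separated by more than $\ell$ ordinary edges (or by a wiggly edge), so $P_e$ satisfies condition~\ref{cnd:6} in the definition of a complex path.

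To build the loop-paths $L_e$, first orient each tree edge $e=uv$ arbitrarily, designating one endpoint as its \emph{base}; every vertex will then have at most $\deg_T v\le \varepsilon_1\deg_G v$ loop-paths based at it. Apply Observation~\ref{obs:e-rainbow} to $G$ to obtain a $(k_0,m,\delta,\varepsilon)$-rainbow structure $\GG=(G_1,\dots,G_{k_0})$ with $k_0=3\ell$, $\delta=\varepsilon_1$, $m$ a small constant, and $\varepsilon\ll\varepsilon_1$. For each base vertex $v$ and each tree edge $e$ based at $v$, produce $L_e$ in the form $L_e:=Q_e^1\circ (Q_e^2)^r$ by two successive applications of Lemma~\ref{lem:rainbow}: first build an edge-disjoint path $Q_e^1$ of length $\lfloor k_e/2\rfloor$ from $v$, ending at some vertex $w_e$; then build a second edge-disjoint path $Q_e^2$ of length $\lceil k_e/2\rceil$ from $v$, forced to terminate at the same $w_e$. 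Steering $Q_e^2$ to $w_e$ is done by using the forbidden-vertex set $M$ and forbidden-index set $\mathcal J$ of Lemma~\ref{lem:rainbow} on its last two steps, which is possible because the rainbow gap $2\varepsilon\deg_G v$ between consecutive layers, together with the $\varepsilon_1$-freeness of $G$, leaves many admissible edges even after all earlier choices are accounted for. Index compatibility at the turnaround vertex $w_e$ (for the concatenation $Q_e^1\circ (Q_e^2)^r$ to be a valid complex path) and at $v$ (for $L_e\circ e$ to remain valid) is handled the same way: the $\varepsilon_1$-free condition guarantees that at most an $\varepsilon_1$ fraction of the remaining edges have any prescribed forbidden index.

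Assembling everything, the family $\{P_e\}_{e\in E(T)}$ consists of pairwise edge-disjoint complex paths in $T\cup G$; each $P_e$ has length divisible by $\ell$ plus $1$, so it is a $1$-path; and $\tilde\Gamma$ is the tree isomorphic to $T$, hence spanning. The main obstacle I expect is the endpoint-matching in the rainbow construction: Lemma~\ref{lem:rainbow} prescribes length and forbidden vertices/indices but does not directly control the terminal vertex of a path. The trick is to use the top few layers $G_{k_0-1},G_{k_0}$ of the rainbow structure essentially as a steering column for the last two edges of $Q_e^2$, which is where $\varepsilon_1$ must be chosen small enough so that, even after accounting for all previously used edges, all forbidden vertices in $M$, all forbidden indices in $\mathcal J$, and the index restriction imposed by $Q_e^1$ at $w_e$, there still remains a legal edge to close the loop.
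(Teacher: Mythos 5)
There is a genuine gap at the step you yourself flag: you need to \emph{steer} the second rainbow path $Q_e^2$ so that it ends exactly at the vertex $w_e$ where $Q_e^1$ ended, and Lemma~\ref{lem:rainbow} simply does not provide a mechanism for that. That lemma lets you \emph{forbid} a set $M^v_i$ of at most $m$ vertices (a small constant) and a set $\JJ^v_i$ of at most $q$ indices at each step, but it gives no way to \emph{require} the terminal vertex of a path. After $\lceil k_e/2\rceil-1$ rainbow-guided steps, there is no reason for the partially built $Q_e^2$ to be within one or two steps of $w_e$; in a large graph with spread-out degree fractions it will generically be far from $w_e$, and a forbidden set of constant size cannot force the walk there. ``Using the top layers as a steering column'' is not an argument; it would need a concrete reachability lemma that is stronger than anything in Section~\ref{sec:path-graphs}. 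In addition, the need for loop-paths $L_e$ in $G$ is a self-inflicted difficulty: $G$ is a unit complex graph without stubs, not a graph with many loops at a vertex, so Observation~\ref{obs:uni-path} does not apply, and there is no ready-made way to produce a loop-path of prescribed length at a prescribed vertex.

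The deeper issue is that you over-constrained the target. The lemma only asks for \emph{some} spanning $1$-path-tree on $T\cup G$; you decided to make $\tilde\Gamma$ isomorphic to $T$ by insisting each $P_e$ has the same endpoints as $e$. That is exactly what forces the impossible endpoint-matching. The paper avoids this entirely via a block-contraction (``structured tree'') argument: it partitions $V(G)$ into blocks already spanned by $1$-path-trees, takes a leaf block $X_u$ with tree edge $e$ to its parent, grows a single rainbow path $Q$ from the parent endpoint $v$ of length $1-\alpha(e)\bmod\ell$ with a suitable initial index, and merges $X_u$ with whatever block $Y$ contains the (uncontrolled) terminal vertex of $Q$: if $Y\neq X_u$ the concatenation $e\circ Q$ is the $1$-path joining the trees, and if $Y=X_u$ one simply uses a single unit edge of $Q$ that crosses out of $X_u$ to merge two blocks. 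No endpoint control is ever needed. You should abandon the ``reproduce $T$ edge-by-edge'' plan and adopt an argument that is indifferent to where the auxiliary rainbow paths land.
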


\begin{proof}
Let $\varepsilon_1=\min(\frac{1}{2^{\ell+1}(\ell+2)},2^{-(\ell+2)})$. Then, by Observation~\ref{obs:e-rainbow}, there exists an $(\ell-1,0,\varepsilon_1,\varepsilon_1)$-rainbow structure $\GG$ in $G$.

We define a {\em structured tree $T'$ over $G\cup T$} to be a tree whose vertices are subsets $X_i$ of $V(G)$ and which satisfies that 
\begin{itemize}
\item the subsets $X_i$ form a partition of $V(G)$,
\item every $X_i$ is spanned by a $1$-path-tree $\Pi_{X_i}$ on $G\cup T$,
\item each edge between two vertices $X$ and $Y$ in $T'$ corresponds to an edge in $T$ with one endpoint in $X$ and the other in $Y$, and
\item every edge of $G\cup T$ is used at most once. That is, it either corresponds to an edge between the two vertices of $T'$ or it is involved in at most one of the $1$-path trees.
\end{itemize}

Let $T'$ be the structured tree such that its vertices are all the maximal $1$-subtrees of $T$. (By maximality of $1$-subtrees, every edge in $T'$ corresponds to a non-unit edge in $T$.)
We repeatedly modify $T'$ using the following operation, which decreases the number of vertices of $T'$.

Choose an edge $X_uX_v$ of $T'$ such that $X_u$ is a leaf. Let $e$ be the edge of $T$ corresponding to $X_uX_v$, let $u\in X_u$, $v\in X_v$ be the endpoints of $e$ and $j$ its index at $v$.  
By Lemma~\ref{lem:rainbow}, there exists a complex path $Q$ in $\GG$ starting in $v$ of length congruent to $1-\alpha(e)$ modulo $\ell$, with the initial index different from $j$ or equal to $-1$, edge-disjoint from all previously chosen paths in $\GG$.

Let $y$ be the last vertex of $Q$ and let $Y$ be the vertex of $T'$ such that $y\in Y$. If $Y\neq X_u$,  we remove $X_u$ and replace $Y$ by $X_u\cup Y$ in $T'$. This yields a structured tree, because $e\circ Q$ is a complex $1$-path and therefore, $\Pi_{X_u}\cup \Pi_{Y}\cup (e\circ Q)$ is a $1$-path tree spanning $X_u\cup Y$. 

If $Y=X_u$, there is an edge $e'$ in $Q$ with endpoints $u'\in X_u$ and $v'\in X_{v'}$ such that  $X_u\neq X_{v'}$. In this case, we remove $X_u$ and replace $X_{v'}$ by $X_u\cup X_{v'}$ in $T'$.

Recall that $e'$ is a unit edge, therefore $\Pi_{X_u}\cup \Pi_{X_{v'}}\cup e'$ is a $1$-path-tree and thus, $T'$ is a structured tree.

We repeat this process until we obtain a structured tree $T'$ over $G\cup T$ that has only one vertex $X$. Then, $\Pi_X$ is the desired $1$-path-tree.
\end{proof}

\begin{lemma}\label{lem:rainbow-tree}
Assume that $\ell$ is even. Then, there exists $\varepsilon_2>0$ such that if $G$ is an $\varepsilon_2/2$-free bipartite unit graph without stubs with parts $A,B$, and $\Phi, \Psi$ are edge-disjoint $1$-path-trees both with end set $V(G)$ (i.e., $\tilde{\Phi}$ and $\tilde{\Psi}$ are spanning trees on $V(G)$ and $\underline{\Phi}$ and $\underline{\Psi}$ are edge-disjoint) edge-disjoint from $G$ with bipartition $A,B$, such that $\deg_{\Phi} v\leq \varepsilon_2\deg_G v$ and $\deg_{\Psi} v\leq \varepsilon_2\deg_G v$ for every $v$, there exist edge-disjoint $\ell$-path-trees $\Pi_A$ and $\Pi_B$ in $\underline{\Phi}\cup \underline{\Psi}\cup G$, such that $\Pi_A$ is spanning $A$ and $\Pi_B$ is spanning $B$.

\end{lemma}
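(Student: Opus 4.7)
The plan is to adapt the structured-tree construction of Lemma~\ref{lem:one-tree} and build $\Pi_A$ and $\Pi_B$ separately, using disjoint halves of $G$ so that edge-disjointness is immediate. First, I would split $G$ via Lemma~\ref{lem:fraction} into two edge-disjoint $\tfrac12$-fractions $G^A,G^B$, both still $\varepsilon_2$-free (taking $\varepsilon_2$ small enough relative to $\varepsilon_1$) and of minimum degree large enough that Observation~\ref{obs:e-rainbow} provides a rainbow $(\ell-2,m,\delta,\varepsilon)$-structure in each. Then $\Pi_A$ will live in $\underline{\Phi}\cup G^A$ and $\Pi_B$ in $\underline{\Psi}\cup G^B$.

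For $\Pi_A$, I would root $\tilde{\Phi}$ at some vertex $r\in A$. For each non-root $u\in A$, let $v_u\in B$ be its $\tilde{\Phi}$-parent and $w_u\in A$ its grandparent, and let $P_u=P_{uv_u}\circ P_{v_u w_u}$ be the corresponding $2$-path in $\Phi$; these $2$-paths realize the edges $(u,w_u)$ of a spanning tree $T_A$ on $A$, and each $P_u$ has length $\equiv 2\pmod{\ell}$. To promote each $P_u$ to a complex $\ell$-path I would attach at the $u$ end a $G^A$-path $Q_u$ of length $\ell-2$, produced by Lemma~\ref{lem:rainbow} with the avoidance set $M^u$ containing the vertices of $P_u$ other than $u$ (so that no vertex is repeated) and the forbidden-index set $\JJ^u$ containing the initial index of $P_u$ at $u$ (so that consecutive indices differ). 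Because $\ell-2$ is even, $G^A$ is bipartite and $u\in A$, the path $Q_u$ alternates between $A$ and $B$ and ends at some vertex $u'\in A$, producing a complex $\ell$-path $Q_u^r\circ P_u$ from $u'$ to $w_u$.

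Since $u'$ cannot be prescribed, the pairs $(u',w_u)$ need not immediately form a spanning tree on $A$, so I would build $\Pi_A$ iteratively in the structured-tree style of Lemma~\ref{lem:one-tree}. Maintain a partition of $A$ into components, each already spanned by a partial $\ell$-path-tree together with unprocessed $2$-paths from $\Phi$, process the edges of $T_A$ from the leaves upward, and at each step use the rainbow-lemma flexibility (by also adding vertices of the current component of $u$ to $M^u$) so that either the endpoint $u'$ of $Q_u$ or one of its intermediate $A$-vertices lies outside that component; the completed $\ell$-path then bridges two components, which are merged. The construction of $\Pi_B$ is symmetric, using $\Psi$ and $G^B$.

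The main obstacle is the same one overcome in Lemma~\ref{lem:one-tree}: guaranteeing that each extension $Q_u$ bridges two distinct components rather than looping back inside one. The bipartite/even-$\ell$ setup is crucial here, since it forces $Q_u$ to visit $\Theta(\ell)$ many $A$-vertices as it alternates sides, so that as long as a component is a proper subset of $A$, the $\varepsilon_2/2$-free density together with the avoidance budget of Lemma~\ref{lem:rainbow} suffices to steer $Q_u$ out of that component and close the merge step.
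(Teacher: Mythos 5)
Your split of $G$ into halves $G^A, G^B$ and the use of bipartite parity to control which side the rainbow paths land on are reasonable, but the central construction does not go through: the $2$-paths $P_u = P_{uv_u}\circ P_{v_u w_u}$ are not pairwise edge-disjoint. Whenever a vertex $v\in B$ has several children $u_1,\ldots,u_j$ in $\tilde{\Phi}$, they all share the same grandparent $w$, so the $1$-path $P_{vw}$ of $\Phi$ is the common second leg of every $P_{u_i}$; the proposed $\ell$-paths $Q_{u_i}^r\circ P_{u_i}$ therefore reuse the edges of $P_{vw}$ as many times as $v$ has children, and cannot be part of a decomposition. This is exactly why the paper never concatenates two $1$-paths of $\Phi$: it starts the rainbow extension at the $B$-vertex $y$ where the $1$-paths meet, takes a rainbow path $P$ of length $\ell-1$ (odd, hence landing in $A$ by bipartiteness), and glues on exactly \emph{one} $1$-path of $\Phi$, so that each $1$-path is consumed at most once.

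The merge step is also unresolved. You want to steer $Q_u$ out of the current component by placing the component's vertices into the avoidance set $M^u$, but Lemma~\ref{lem:rainbow} only allows $|M^u|\le m$ for a fixed $m$ depending on $\ell$ alone, while the component may contain $\Theta(|A|)$ vertices, so nothing prevents the rainbow path from landing right back inside. In Lemma~\ref{lem:one-tree} the analogous obstruction is handled by extracting a single unit edge of $Q$ that crosses the component boundary; that escape is unavailable here because you need a complete $\ell$-path, not an arbitrary subpath. The paper sidesteps the issue structurally rather than by steering: at each processed $B$-vertex $Y$ it keeps two candidate $1$-paths $P_1, P_2$ (to two distinct children of $Y$, or to one child and the parent), and the endpoint $z$ of the single rainbow path $P$ can lie in at most one of the corresponding components, so at least one of $P_1\circ P$, $P_2\circ P$ automatically bridges two distinct components.
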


\begin{proof}
Let $\varepsilon_2=\min(\frac{1}{2^{\ell+1}(3\ell+2)},2^{-(\ell+2)})$. By Lemma~\ref{obs:e-rainbow}, there exists an $(\ell-1,2\ell,\varepsilon_2,\varepsilon_2)$-rainbow structure $\GG$ in $G$.

We start by building $\Pi_A$ using $\Phi$ and $\GG$. It is important to note that we construct only paths in $\GG$ starting in vertices of $B$. Therefore, we can later build $\Pi_B$ using paths in $\GG$ starting in vertices of $A$ in such a way that $\Pi_A$ and $\Pi_B$ are edge-disjoint.

We define a {\em structured tree} $T$ over $G$ and $\Phi$ as a rooted tree whose vertices are subsets $X_i$ of $V(G)$ and which satisfies that 
\begin{itemize}
\item the subsets $X_i$ are pairwise vertex-disjoint and $A\subseteq\bigcup_{X_i\in V(T)} X_i$,
\item the root $X_r$ of $T$ is a subset of $A$
\item every vertex of $T$ containing a vertex in $B$ is a singleton,
\item every leaf $X_i$ of $T$ is spanned by an $\ell$-path-tree $\Pi_{X_i}$,
\item every edge $X_iX_j$ in $T$ corresponds to a path of $\Phi$ with endpoints $x_i\in X_i$ and $x_j\in X_j$, and
\item every edge of $G\cup \underline{\Phi}$ is involved in at most one path of $T$ or of some $\Pi_{X_i}$.
\end{itemize}

We start with $T=\tilde{\Phi}$ (all vertices of $T$ being singletons) and repeatedly perform the following operations which preserve the structured tree conditions and decrease the number of vertices of $T$. If $T$ has a leaf containing a vertex of $B$, we delete it. Otherwise, let $Y$ be an internal vertex of $T$ with maximum distance from the root. Then, $Y=\{y\}$ with $y\in B$, in particular, $Y$ is not the root.

\begin{itemize}
\item If $Y$ has at least two children, let  $X_1$ and $X_2$ be two of them. Let $P_1$ and $P_2$ be the $1$-paths of $\Phi$ corresponding to the edges $X_1Y$ and $X_2Y$ in $T$. 
By Lemma~\ref{lem:rainbow} for $q=2$, there exists a path $P$ in $\GG$ of length $\ell-1$ starting in $y$ which is non-conflicting with $P_1$ and $P_2$ at~$y$ (since $\sub{P_1}{y}$ and $\sub{P_2}{y}$ have at most $\ell$ vertices each and $P$ can avoid the indices of $P_1$ and $P_2$ at~$y$) and edge-disjoint from all previously constructed paths in $\GG$. 
Let $z$ be the last vertex of $P$. 
By the parity of $\ell$, we have $z\in A$. Let $Z$ be the vertex of $T$ such that $z\in Z$. Without loss of generality (with possible exchange of $X_1$ and $X_2$) we might assume that $Z\neq X_1$. Note that $P_1\circ P$ is a complex $\ell$-path. 
Therefore, we can replace the vertex $Z$ by $Z\cup X_1$, because $\Pi_Z\cup \Pi_{X_1}\cup (P_1\circ P)$ is an $\ell$-path-tree. This decreases the number of children of $Y$ and the number of vertices of $T$.

\item If $Y$ has a single child $X_1$, we consider the parent $X_2$ of $Y$ and let $P_1$ and $P_2$ be the $1$-paths of $\Phi$ corresponding to the edges $X_1Y$ and $X_2Y$ in $T$. Then again, by Lemma~\ref{lem:rainbow} for $q=2$, there exists  a path $P$ in $\GG$ of length $\ell-1$ starting at $y$ which is non-conflicting with $P_1$ and $P_2$ and edge-disjoint from all previously constructed paths in $\GG$. Let $z$ be the last vertex of $P$. If $z\notin X_1$, we replace $Z$ by $Z\cup X_1$ as in the previous case. Otherwise, we observe that $P_2\circ P$ is an $\ell$-path from $X_1$ to $X_2$. Therefore, we can replace $X_2$ by $X_2\cup X_1$, spanned by the $\ell$-path-tree $\Pi_{X_1}\cup \Pi_{X_2}\cup (P_2\circ P)$. Again, this decreases the number of children of $Y$ and the number of vertices of $T$.

\end{itemize}

Eventually, we obtain $T$ with only one vertex $X$. Then $\Pi_X$ is an $\ell$-path tree spanning $A$. We let $\Pi_A:=\Pi_X$.
We build $\Pi_B$ in an analogous way by defining a structured tree over $G$ and $\Psi$ with roles of $A$ and $B$ swapped. As we argued above, it is possible to construct $\Pi_B$ which is edge-disjoint from $\Pi_A$.

\end{proof}

\section{Proof of a key lemma}\label{sec:core}
This section is devoted to proving the following lemma, which plays a key role in the proof of Theorem~\ref{thm:main}.

\begin{lemma}\label{lem:core}
For every integer $\ell$ and $\beta\in (0,1]$, there exists $k$ such that for every $r$ and $s$ there exists $d$ such that every complex graph $G_0$ which

\begin{itemize}
\item is $k$-edge-connected,
\item is $\ell$-divisible,
\item has minimum edge-degree at least $d$,  
\item has minimum unit edge-degree at least $\beta d$,
\item has maximum stub-degree at most $s$, and
\item $\iota(G_0)\leq r$
\end{itemize}
can be decomposed into $\ell$-paths such that every path contains at most one stub.
\end{lemma}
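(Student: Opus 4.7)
The plan is to follow the Eulerian-tour blueprint of Section~\ref{sec:path-graphs}, with a preparatory step that first absorbs every stub into its own $\ell$-path. First, I would apply Lemma~\ref{lem:condens} to $G_0$; using the hypotheses $\udeg_{G_0} v \ge \beta d$ and $\iota(G_0) \le r$, this yields, for $d$ chosen large enough relative to $\beta$, $\ell$, $r$, a loop-free condensation $H$ that is $\varepsilon$-free (for $\varepsilon$ smaller than $1/(8\ell)$), still $k$-edge-connected, and of high minimum edge-degree. Next, using Observation~\ref{obs:e-rainbow} (via Lemma~\ref{lem:fraction}), I would split off a small rainbow structure $\GG$ from $H$, leaving a main graph $H_1 := H \setminus \GG$ that still satisfies the hypotheses of Theorem~\ref{thm:low-conf-complex}.

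For each stub $\sigma$ at a vertex $v$, of length $\alpha_\sigma$ and index $j_\sigma$, I would use the online guarantee of Lemma~\ref{lem:rainbow} inside $\GG$ to build an edge-disjoint complex path $P_\sigma$ starting at $v$, of length $-\alpha_\sigma \bmod \ell$, whose first edge has index at $v$ different from $j_\sigma$. Then $\sigma \circ P_\sigma$ is an $\ell$-path containing exactly one stub. The online freedom of Lemma~\ref{lem:rainbow} lets us steer the non-stub endpoint $w_\sigma$ of $P_\sigma$ onto any prescribed target; I would exploit this to arrange the residue $H_2 := H_1 \setminus \bigcup_\sigma P_\sigma$ to have even edge-degree at every vertex. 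Letting $N$ denote the total number of stubs in $G_0$, the parity identity $\sum_v \deg_{H_1}(v) \equiv N \pmod 2$ makes such a global assignment of endpoints consistent; should the number of odd-$\edeg$ vertices exceed $N$, a few additional edge-disjoint parity-fixing $\ell$-paths (stub-free) can be constructed inside $\GG$ to compensate. With the stubs consumed, I would apply Theorem~\ref{thm:low-conf-complex} to $H_2$ to obtain a long-path-graph $\Gamma$ of arbitrarily small intersection ratio; Observation~\ref{obs:inter-conf} then gives $\conf{\Gamma} \le 1/4$, Theorem~\ref{thm:c-eulerian} yields a non-conflicting Eulerian tour in $\Gamma$, and Observation~\ref{obs:eul-dec} concatenates that tour into a single complex $\ell$-path covering $H_2$. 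Splitting that path into pieces of length $\ell$ (none of which contains a stub) and unioning them with the stub-paths $\{\sigma \circ P_\sigma\}$ produces the claimed decomposition of $G_0$.

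The main obstacle will be the stub-handling step: consuming every stub via a dedicated $\ell$-path while simultaneously keeping the residue Eulerian, avoiding index collisions at every vertex, maintaining edge-disjointness across all stub-paths, and preserving enough connectivity in $H_2$ for the path-graph machinery. The key combinatorial device is a greedy matching of odd-$\edeg$ vertices of $H_1$ to the non-stub endpoints $w_\sigma$; the global parity count is automatic from the handshake lemma, but the actual construction, which must also avoid forbidden indices and respect edge-disjointness, relies on a careful online deployment of Lemma~\ref{lem:rainbow}. Connectivity of $H_2$ is handled by taking $\GG$ small relative to the $k$-edge-connectivity of $H$.
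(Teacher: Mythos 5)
Your proposal captures the correct high-level shape — absorb stubs into dedicated $\ell$-paths, decompose the residue into a low-conflict long-path-graph, and finish with a non-conflicting Eulerian tour — but it has two genuine gaps, both precisely where the paper's heaviest machinery ($\ell$-path-trees) lives.

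First, \emph{connectivity of the path-graph}. After you build $\Gamma$ via Theorem~\ref{thm:low-conf-complex}, you invoke Theorem~\ref{thm:c-eulerian}. But Theorem~\ref{thm:c-eulerian} needs an \emph{Eulerian} path-graph, which in particular must be connected as a path-graph, i.e.\ $\tilde\Gamma$ must be connected. Theorem~\ref{thm:low-conf-complex} gives no such guarantee (the paper flags this explicitly right after stating it), and high edge-connectivity of the residue $H_2$ is not the same thing: $\tilde\Gamma$ is a graph on the \emph{endpoints} of the decomposition paths, and nothing in the low-conflict construction prevents it from being highly disconnected. Your remark that ``connectivity of $H_2$ is handled by taking $\GG$ small'' conflates these two notions. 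The paper closes this hole by extracting a bipartite subgraph after a max-cut, building $1$-path-trees $\Phi_i$ (Lemma~\ref{lem:one-tree}) and from them $\ell$-path-trees $\Pi_A,\Pi_B,\Pi'_A,\Pi'_B$ spanning the two sides (Lemma~\ref{lem:rainbow-tree}), plus a bridging long path $Q$; the union $\Lambda'\supseteq \Pi_A\cup\Pi_B\cup Q$ is thereby connected as a path-graph, which is what Theorem~\ref{thm:c-eulerian} needs. Your blueprint has no analogue of this step, so the Eulerian-tour application is not justified.

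Second, \emph{parity control}. You want to choose the non-stub endpoint $w_\sigma$ of each $P_\sigma$ so that the residue has all degrees even, and you attribute this freedom to the ``online guarantee'' of Lemma~\ref{lem:rainbow}. But Lemma~\ref{lem:rainbow} only lets you avoid a bounded set of vertices in the \emph{initial} segment $\sub{P^v_i}{v}$ and a bounded set of \emph{initial} indices; it says nothing about where a length-$k$ path ends, and in general it cannot — the endpoint after $k$ random-ish steps in a rainbow structure is not steerable to a prescribed target. (The same objection applies to your fallback ``parity-fixing $\ell$-paths.'') The paper instead absorbs all parity defects by adding a \emph{subset} of the $\ell$-paths of $\Pi'_A\cup\Pi'_B$ to $\Lambda'$: since $\tilde{\Pi'_A}$ and $\tilde{\Pi'_B}$ are spanning trees on $A$ and $B$, one can pick a subforest whose odd-degree vertices are exactly the vertices one needs to fix, leaving at most one odd vertex per side (and the bipartite structure plus $\ell$ even keeps the $\ell$-paths inside $A$ or inside $B$). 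The leftover paths $\LL$ of $\Pi'_A\cup\Pi'_B$ are already $\ell$-paths, so they simply join the final decomposition. Without this (or an equivalent) device, your ``handshake lemma makes it consistent'' step is not a construction, and the proof does not go through.

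Two smaller points: Lemma~\ref{lem:condens} is stated only for complex graphs \emph{without stubs}, so it cannot be applied to $G_0$ directly; you must strip the stubs first as the paper does. And the single-vertex case of $G_1$ (handled in the paper by Lemma~\ref{lem:singleton}) is not covered by your argument, since there is no non-trivial bipartition or spanning tree there.
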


Before proving the lemma, we would like to recall some facts that are used in the proof. First, note that if $(A,B)$ is a maximum cut in a $2k$-edge-connected multigraph $G$, the multigraph $G'$ obtained from $G$ by deleting edges with both ends in $A$ or in $B$ is $k$-edge-connected. Moreover, if $G$ has at least two vertices and no loops, $\deg_{G'} v\geq \deg_G v/2$ for every $v\in V(G)$.

Second, for every tree $T$ and every set $U\subseteq V(T)$ of even size, it is possible to find a subgraph $T'$ of $T$ such that $U$ is the set of vertices of odd degree in $T'$.

Informally, the proof of Lemma~\ref{lem:core} proceeds as follows. First, we construct a collection of $\ell$-paths containing all the stubs so that the rest of the graph is still of high minimum unit degree and high connectivity. Next, we condense some paths to make every vertex incident with many unit non-loops. We take a highly connected bipartite subgraph of this condensation and construct $\ell$-path-trees spanning its parts. Finally, we find a long-path-decomposition of the rest of the graph with low conflict and make it connected and (almost) Eulerian using some $\ell$-paths from the $\ell$-path trees. We then use Theorem~\ref{thm:c-eulerian} to show that this long-path-graph has a non-conflicting Eulerian tour or trail, which yields an $\ell$-path-decomposition.

\begin{proof}[Proof of Lemma~\ref{lem:core}]
We assume that $\ell$ is even. If not, we apply Lemma~\ref{lem:core} for $2\ell$; there exist $k'$ and $d'$ such  that if  $G_0$ is not $2\ell$-divisible but satisfies all the remaining requirements of Lemma~\ref{lem:core} for $2\ell$, it is possible to greedily construct a path $P$ in $G_0$ consisting of $\ell$ unit edges such that $G_0\setminus P$ satisfies the requirements of Lemma~\ref{lem:core} for $2\ell$.

Let $\varepsilon=2^{-c}$ for $c\in \NN$ such that $\varepsilon\leq \min(\frac{\beta}{32(\ell-1)+1}, \varepsilon_1, \varepsilon_2, 1/1000\ell^2)$  where $\varepsilon_1$ is as in Lemma~\ref{lem:one-tree} and $\varepsilon_2$  is as in Lemma~\ref{lem:rainbow-tree}.
Let $t= 2 \max(L,2^n)$ for $n$ and $L$ as in Lemma~\ref{cor:sparse-tree} for $\varepsilon^3/4$ and $m=2$. Let $k=2^n$ where $n$ is as in Lemma~\ref{cor:sparse-tree} for $\varepsilon$ and $m=\lceil \log t\rceil$.

Now, we construct a collection of paths containing all stubs of $G_0$ such that the rest of the graph is highly connected and of high minimum degree. Let $S$ be the set of stubs in $G_0$ and let $G_1=G_0\setminus S$.
Note that $G_1$ is $k$-edge-connected with minimum degree at least $d$. By Lemma~\ref{cor:sparse-tree} and the choice of $k$, $G_1$ contains $t$ edge-disjoint spanning trees $T_1,\ldots, T_t$ such that $\sum_{i\in[t]}\deg_{T_i} v\leq\varepsilon \deg_{G_1} v$ for every $v$, if $d$ is sufficiently large.

Let $H$ be the unit subgraph of $G_1\setminus \left(\cup_{i\in [t]} E(T_i)\right)$ containing all its unit edges. Note that $H$ has degree at least $(\beta-\varepsilon) d$. 
Let $F_0, F_1$ be edge-disjoint $1/4$-fractions of $H$. Such $F_0, F_1$ exist by Lemma~\ref{lem:fraction}. 

Let $d_{F_0}$ be the minimum degree in $F_0$. Note that $d_{F_0}\geq (\beta-\varepsilon)d/4-2$.  Let $\gamma_1=s/d_{F_0}$ and $\gamma_2=r/d_{F_0}$. If $d$ is sufficiently large, $F_0$ satisfies the assumptions of Observation~\ref{obs:e-rainbow} for $\ell-1$, $0$, $\gamma_1$ and $\gamma_2$. That is, $d_{F_0}\geq 2^{\ell+1}(\ell+2)$ and $\gamma_1,\gamma_2<2^{-(\ell+2)}$.
Thus, there exists an $(\ell-1,0,\gamma_1,\gamma_2)$-rainbow structure $\FF$ in $F_0$.

For each stub $s\in S$, we construct a path $P$ in $\FF$ such that $s\ct P$ is an $\ell$-path. This is possible by Lemma~\ref{lem:rainbow} (with $q=1$), since $F_0$ is $\gamma_2$-free and $s\leq \gamma_1\deg v$ for every $v$.
Let $\PP$ be the collection of these $\ell$-paths. Clearly, each path in $\PP$ has exactly one stub and $G_1\setminus E(\PP)$ has no stubs. Thus, if $G_1\setminus E(\PP)$ has an $\ell$-path-decomposition $\QQ$, $\PP\cup\QQ$ forms an $\ell$-path-decomposition of $G_0$ such that each of its paths contains at most one stub.

Next, we show how to construct an $\ell$-path-decomposition of $G_1\setminus E(\PP)$.
If $G_1$ has only one vertex, the existence of an $\ell$-path-decomposition of $G_1\setminus E(\PP)$ follows from Lemma~\ref{lem:singleton}, because for $d$ sufficiently large, $G_1\setminus E(\PP)$ has more than $r$ loops and a path obtained by concatenating all the loops is an $\ell$-path, since $G_1\setminus E(\PP)$ is $\ell$-divisible.

From now on we assume that $G_1$ has at least two vertices.

Observe that for $d$ sufficiently large, $F=F_1\cup \left(\bigcup_{i\in [t]} T_i\right)$ satisfies the requirements of Lemma~\ref{lem:condens}. In particular, 
\begin{align*}
1-\udeg_{F} v/\deg_{F}v &\leq \frac{\varepsilon \deg_{G_1} v}{\frac{\beta-\varepsilon}{4} \deg_{G_1} v -2+t}\\
&\leq  \frac{\varepsilon\deg_{G_1} v}{\frac{\beta-\varepsilon}{4} \deg_{G_1} v}\leq \frac{4\varepsilon}{\beta-\varepsilon}
\end{align*}
for every $v$, since $t\geq 2$. Thus, by the choice of $\varepsilon$, $\udeg_{F} v/\deg_{F}v\geq 1-\frac{1}{8(\ell-1)}$ for every $v$. 

Let $G_2$ be the graph obtained from $F$ by applying Lemma~\ref{lem:condens}. Then, $G_2$ is $t$-edge-connected, has no loops and every vertex satisfies either~\ref{it:big-vert} or~\ref{it:univ-vert} of Lemma~\ref{lem:condens}, in particular, $\udeg_{G_2} v\geq 3/4 \deg_{G_2} v$ for every $v$. Let $(A,B)$ be a maximum cut in $G_2$ and let $G_2'$ be the bipartite graph with parts $A$ and $B$ obtained from $G_2$ by removing all the edges inside $A$ and $B$. Observe that $G_2'$ is $t/2$-edge-connected (and has minimal degree at least $t/2$) and at least half of the edges incident with $v$ in $G_2'$ are unit for every $v$. Thus, by Lemma~\ref{cor:sparse-tree} and by the choice of $t$, there exist edge-disjoint spanning trees $T'_1,T'_2,T'_3, T'_{4}$ such that $\sum_{i\in[4]}\deg_{T'_i} v<\frac{\varepsilon^3}{4} \deg_{G_2'} v$ for every $v$. Note that this also implies that $\varepsilon^3t/8\geq 4$, since every vertex has degree at least one in every $T_i, i\in [4]$.
Let $H'$ be the unit subgraph of $G_2'\setminus \left(\cup_{i\in [4]} E(T'_i)\right)$ containing all unit edges and note that $\sum_{i\in[4]}\deg_{T'_i} v\leq\varepsilon^3 \deg_{H'} v$ since $\deg_{H'} v\geq \deg_{G_2'} v/2- \frac{\varepsilon^3}{4} \deg_{G_2'} v\geq \deg_{G_2'} v/4$.

Let $F_1',F_2',F_3', F_4'$ be $\varepsilon^2$-fractions and $F_5',F_6'$ be $2\varepsilon$-fractions of $H'$, all of them mutually edge-disjoint. Their existence follows from Lemma~\ref{lem:fraction}. Note that if $d$ is sufficiently large, $F_1',\ldots, F_4'$ are $\varepsilon$-free because every vertex in $F_i'$, $i\in [4]$, either has degree at least $r/\varepsilon$ (vertices satisfying~\ref{it:big-vert} in $G_2$) or has degree at least $\varepsilon^2t/8-2>0$ and is incident only with ordinary edges and edges that have index $-1$ at it (vertices satisfying~\ref{it:univ-vert} in $G_2$). 

Moreover, $\deg_{T_i'} v \leq \varepsilon \deg_{F_i'} v$ for every $v$ and $i\in [4]$, since $\deg_{T_i'} v\leq \varepsilon^3 \deg_{H'} v-3$ (since $v$ has degree at least one in each tree $T'_j, j\in [4]\setminus\{i\}$). 
Thus, by  Lemma~\ref{lem:one-tree}, there exist $1$-path spanning trees $\Phi_1,\ldots, \Phi_{4}$ such that $\underline{\Phi_i}\subseteq T_i'\cup F_i'$, $i\in [4]$. Next, from the same argument as above, it follows that $F_5'$ is $\varepsilon/2$-free for $d$ sufficiently large.
Moreover, $\deg_{\Phi_1} v \leq \deg_{T_1'}v+\deg_{F_1'}v\leq \varepsilon \deg_{F_5'} v$ and similarly, $\deg_{\Phi_2} v \leq \varepsilon \deg_{F_5'} v$ for every $v$. Therefore, by Lemma~\ref{lem:rainbow-tree}, there exist $\ell$-path-trees $\Pi_A$, $\Pi_B$ on $\underline{\Phi_1}\cup\underline{\Phi_2}\cup F'_5$ spanning $A$ and $B$. Analogously, there are $\ell$-path trees $\Pi'_A$, $\Pi'_B$ spanning $A$ and $B$ contained in $\underline{\Phi_3}\cup\underline{\Phi_4}\cup F'_6$.
 
Let $Q$ be a long path in $H'\setminus E(\bigcup_{i\in[6]}F_i')$ with one end in $A$ and the other end in $B$. Such $Q$ exists, because $H'\setminus E(\bigcup_{i\in[6]}F_i')$ is $1/2$-free and every vertex has degree at least $2\ell+1$ (by the same arguments as above), so the path can be built greedily. 

We define the path-graph $\Upsilon =\Pi_A \cup \Pi_B\cup \Pi'_A\cup \Pi'_B \cup Q$. 
Observe that 
\begin{align*}
\deg_{\Upsilon} v\leq \deg_{\underline{\Upsilon}} v&\leq \sum_{i=1}^{4}\deg_{\underline{\Phi_i}} v + \deg_{F'_5}v +\deg_{F'_6} v +2\\
&\leq \sum_{i=1}^{4}\deg_{F'_i} v +  \sum_{i=1}^{4}\deg_{T'_i} v + 2( 2\varepsilon \deg_{H'} v +2)+2\\ 
&\leq 4(\varepsilon^2 \deg_{H'} v +2) + \varepsilon^3 \deg_{H'} v+ 2( 2\varepsilon \deg_{H'} v +2)+2\\
&\leq 9\varepsilon \deg_{G_1} v +14
\end{align*}
for every $v$, since $\deg_{H'} v\leq \deg_{G_1} v$.
Let $G_3=\left(G_1\setminus E(\PP\cup F)\right) \cup \left(G_2\setminus E(\underline{\Upsilon})\right)$. Note that $G_3\cup \underline{\Upsilon}$ is a condensation of $G_1\setminus E(\PP)$. Thus, the existence of an $\ell$-path-decomposition of $G_3\cup \underline{\Upsilon}$ implies the existence of an $\ell$-path-decomposition in $G_1\setminus E(\PP)$.

Observe that $\deg_{G_3} v\geq \deg_{G_1} v/4$ for every $v$ for $d$ sufficiently large, because 
\begin{align*}
\deg_{G_3} v&\geq \deg_{G_1} v-\deg_{F_1} v-\deg_{F_0} v-\sum_{i\in [t]} \deg_{T_i} v\\
&\geq \deg_{G_1} v-2(\deg_{G_1} v/4+2)-\varepsilon \deg_{G_1} v=(1/2-\varepsilon)\deg_{G_1}v-4.
\end{align*}

Moreover, if $d$ is sufficiently large, applying Theorem~\ref{thm:low-conf-complex} to $G_3$ yields a long-path-graph $\Lambda$ decomposing $G_3$ with $\inter{\Lambda}<1/12\ell$ and $\deg_{\Lambda}v>\deg_{G_3}v/2\ell$.

Let $\Lambda'$ be the long-path-graph obtained from $\Lambda\cup \Pi_A\cup \Pi_B\cup Q$ by adding some of the $\ell$-paths of $\Pi'_A$ and $\Pi'_B$ such that at most one vertex in $A$ and one vertex in $B$ have odd degree. Let $\LL$ be the set of the remaining $\ell$-paths of $\Pi'_A$ and $\Pi'_B$. Note that $\Lambda'$ is connected, since $\Pi_A\cup \Pi_B\cup Q$ is connected. We claim that $\conf{\Lambda'}<1/4$.

\begin{claim}
$\conf{\Lambda'}<1/4$.
\end{claim}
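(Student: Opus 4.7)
My plan is to show $\conf{v} < 1/4$ for every vertex $v$ of $\Lambda'$, which will give $\conf{\Lambda'} < 1/4$. By Observation~\ref{obs:inter-conf} it is enough to prove $\ell \inter{v} + \inc{v} < 1/4$, where all ratios are computed in $\Lambda'$. The quantitative backbone will be the inequality $\deg_{\Lambda'}(v) \geq \deg_\Lambda(v) > \deg_{G_3}(v)/(2\ell) \geq \deg_{G_1}(v)/(8\ell)$ (combining the bound $\deg_{G_3}(v) \geq \deg_{G_1}(v)/4$ established just before the claim with the degree guarantee from Theorem~\ref{thm:low-conf-complex}), together with the bound $\deg_\Upsilon(v) \leq 9\varepsilon \deg_{G_1}(v) + 14$ on the degree at $v$ contributed by the paths of $\Pi_A \cup \Pi_B \cup Q$ and of the sub-collection of $\Pi'_A \cup \Pi'_B$ absorbed into $\Lambda'$. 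Together these give $\deg_\Upsilon(v)/\deg_{\Lambda'}(v) \leq 72\ell\varepsilon + o(1)$ once $d$ is large.

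For the intersection ratio $\inter{v}$, I will fix any $w \neq v$ and split the paths $Q \in \PP_{\Lambda'}(v)$ with $w \in \sub{Q}{v}$ into a $\Lambda$-contribution and a $\Upsilon$-contribution. The first has size at most $\inter{\Lambda}(v) \deg_\Lambda(v) < \deg_\Lambda(v)/(12\ell)$ by the hypothesis $\inter{\Lambda} < 1/(12\ell)$ from Theorem~\ref{thm:low-conf-complex}, and the second is trivially at most $\deg_\Upsilon(v)$. Dividing by $\deg_{\Lambda'}(v)$ gives $\inter{v} \leq 1/(12\ell) + 73\ell\varepsilon$, hence $\ell \inter{v} \leq 1/12 + 73\ell^2\varepsilon \leq 1/12 + 73/1000$ by the choice $\varepsilon \leq 1/(1000\ell^2)$.

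I will treat $\inc{v}$ analogously. For any non-universal $P \in \PP'_{\Lambda'}(v)$ with initial index $i \neq -1$, the paths $Q \in \PP_{\Lambda'}(v)$ sharing initial index $i$ split into an $\Upsilon$-contribution (trivially at most $\deg_\Upsilon(v)$, normalising to at most $73\ell\varepsilon$) and a $\Lambda$-contribution. For the latter, the observation is that every such $Q \in \PP_\Lambda(v)$ must use, as its first wiggly edge, some wiggly edge of $G_3$ carrying index $i$ at one of its endpoints; since condensation and deletion do not increase the index multiplicity, $\iota(G_3) \leq \iota(G_0) \leq r$, so at most $r$ such wiggly edges occur at any single vertex, and each is used by at most one path. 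Combined with the fact that the relevant endpoint lies in an initial sub-path $\sub{Q}{v}$ of length at most $\ell$, the intersection bound $\inter{\Lambda} < 1/(12\ell)$ controls how many paths from $\PP_\Lambda(v)$ can pass through any one vertex, and since $\deg_\Lambda(v) \to \infty$ as $d \to \infty$ with $r$ and $\ell$ fixed, the $\Lambda$-contribution to $\inc{v}$ is $o(1)$ in $d$.

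Summing the two bounds, $\conf{v} \leq \ell \inter{v} + \inc{v} \leq 1/12 + O(\ell^2 \varepsilon) + o(1)$, comfortably below $1/4$ for $d$ large. I expect the delicate step to be the index-conflict bound within $\Lambda$, since the initial index of a path in $\Lambda$ need not arise from a wiggly edge incident with $v$ itself: it may come from a wiggly edge encountered after several ordinary steps along the path. The argument nevertheless goes through because the relevant initial segment of the path has length at most $\ell$ and $\iota(G_3) \leq r$ limits the number of wiggly edges of a given index at each vertex, so that together with the intersection-ratio control of $\Lambda$ and sufficiently large $d$, the number of $\Lambda$-paths sharing any given non-trivial initial index at $v$ is negligible compared to $\deg_{\Lambda'}(v)$.
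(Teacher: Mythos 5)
Your framework and degree bounds track the paper's exactly: you apply Observation~\ref{obs:inter-conf}, use $\deg_\Lambda v \geq \deg_{G_3}v/2\ell \geq \deg_{G_1}v/8\ell$ and $\deg_\Upsilon v \leq 9\varepsilon \deg_{G_1}v+14$, and bound the contribution to $\inter{\Lambda'}$ of the paths coming from $\Upsilon$ trivially by $\deg_\Upsilon v$. Up to that point, the two arguments are the same. Where you diverge is the index-conflict bound. The paper disposes of this in one line by bounding the numerator of $\inc{\Lambda'}$ by $r$ and writing $\inc{\Lambda'}\leq r/\deg_{\Lambda'}v\leq 8\ell r/d$; you instead attempt a finer decomposition of $\inc$ into an $\Upsilon$-part and a $\Lambda$-part.

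Your $\Lambda$-part argument has a real gap. You correctly note that $\iota(G_3)\leq r$ means that at any \emph{single} vertex $u$ there are at most $r$ wiggly edges carrying index $i$, and that each such edge sits in exactly one path of the decomposition. But different paths $Q\in\PP_\Lambda(v)$ with initial index $i$ can have their first wiggly edge at \emph{different} vertices $u$, and the set of such $u$ across all those paths is not controlled: it can be as large as the number of paths. Invoking $\inter{\Lambda}<1/(12\ell)$ does not rescue this, because that bound caps, per fixed $u$, the number of paths passing through $u$ by roughly $\deg_\Lambda v/(12\ell)$ — a quantity that grows with $d$, not a constant — so combining it with the per-vertex bound $r$ does not produce the claimed $o(1)$. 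Summing $r$ over an unbounded set of candidate vertices is precisely where the argument breaks. Your instinct that the paper's terse assertion deserves scrutiny (since an initial index need not be realized at $v$ itself) is fair, but the correct repair is a structural observation about the paths produced by Theorem~\ref{thm:low-conf-complex} and $\Upsilon$ (or a direct accounting that reduces to counting stub and wiggly-edge incidences at $v$, as the paper implicitly does), not the intersection-ratio route you take, which does not close the gap.
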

\begin{inproof}
Using Observation~\ref{obs:inter-conf}, we have 
\begin{align*}
\conf{\Lambda'} &\leq \inc{\Lambda'} + \ell \inter{\Lambda'}\\
&\leq \inc{\Lambda'}+\ell \inter{\Lambda} + \ell \max_{v}\left(\frac{\deg_{\Upsilon} v}{\deg_{\Lambda} v}\right).
\end{align*} 

We argue that $\inc{\Lambda'}\leq 1/12$ and $\max_{v}\left(\frac{\deg_{\Upsilon} v}{\deg_{\Lambda} v}\right)\leq 1/12\ell$. Together with $\inter{\Lambda}< 1/12\ell$, it yields the result.

Note that $\deg_{\Lambda'} v \geq \deg_{\Lambda} v \geq \deg_{G_3} v/2\ell\geq d/8\ell$ and $\inc{\Lambda'}\leq r/\deg_{\Lambda'} v$. Thus, for $d$ sufficiently large, $\inc{\Lambda'}\leq 8\ell r/d\leq 1/12$.

By the choice of $\varepsilon$, we have $\deg_{\Upsilon} v \leq 9\varepsilon \deg_{G_1} v +14 \leq \deg_{G_1} v/96\ell^2$  for $d$ sufficiently large and $\deg_{\Lambda} v\geq \deg_{G_3}v /2\ell\geq \deg_{G_1} v/8\ell$ for every $v$. Thus, $\max_{v}\left(\frac{\deg_{\Upsilon} v}{\deg_{\Lambda} v}\right)\leq 1/12\ell$.

\end{inproof}

If $\Lambda'$ is Eulerian, Theorem~\ref{thm:c-eulerian} implies that $\Lambda'$ has a non-conflicting Eulerian tour. If $\Lambda'$ is not Eulerian, it has two vertices of odd degree. We add a path between these two vertices which is not conflicting with any other path in $\Lambda'$ to obtain an Eulerian graph. (This can be done for instance by adding a wiggly edge with both indices $-1$ to the underlying graph of $\Lambda'$ and adding a path consisting of this edge.) Applying Theorem~\ref{thm:c-eulerian} to $\Lambda'$ with the additional path yields an Eulerian tour, which becomes an Eulerian trail after removing the additional path.

In either case, Observation~\ref{obs:eul-dec} implies that $\underline{\Lambda'}$ has an $\ell$-path-decomposition. This decomposition together with $\PP$ and $\LL$ yields an $\ell$-path-decomposition of $G_0$.


\end{proof}

\section{Proof of Theorem~\ref{thm:main}}\label{sec:paths}
\begin{proof}[Proof of Theorem~\ref{thm:main}]
The theorem trivially holds for $\ell=1$. Thus, we assume that $\ell\geq 2$ for the rest of the proof.
If as in Lemma~\ref{lem:core}, the graph $G$ is $k$-edge-connected, the result follows immediately. Assume that this is not the case. Let $K=3k$, where $k$ is as in Lemma~\ref{lem:core} for $\ell$, $\beta=1/1000$ and let $d=\max(D_1, D_2)$, where $D_1$ and $D_2$ are specified later in the proof.

Let $(H_0, G_0):= (\emptyset, G)$. We repeat the following {\em reducing} procedure, obtaining pairs of complex hypergraphs $(H_i,G_i)$ until $G_i$ is empty. Let $n$ be the number of iterations, i.e., $G_{n}$ is empty. 

If $G_i$ is $K$-edge-connected or has only one vertex, we let $H_{i+1}=G_i$ and $G_{i+1}=\emptyset$. Otherwise, by Lemma~\ref{lem:hycut}, there is a cut $(A,B)$ in $G_i$ of value (and order) less than $3K$ such that $G[A]$ is $K$-edge-connected or has only one vertex. 
 
Let $C_{i+1}$ be the set of edges and hyperedges containing at least one vertex in both $A$ and $B$. Let $H_{i+1}$ be the complex hypergraph obtained from $G_i[A]$ by adding a stub incident with a vertex $v$ for each incidence of an edge or hyperedge $e$ in $C_{i+1}$ with $v$. If $e$ is a wiggly edge or a hyperedge, we let the index of the corresponding stub be equal to the corresponding index of the edge or hyperedge. Otherwise, we specify the index of the stub later. We will specify lengths of the stubs later in the proof; for the purpose of the reducing procedure, the lengths are irrelevant (so we can set them arbitrarily). Let $S_{i+1}$ be the set of these stubs. Note that $|S_{i+1}|\leq 3K$ and $H_{i+1}$ does not contain any other stubs.
 
Let $G_{i+1}$ be the complex hypergraph with the vertex set $B$ constructed from $G_i$ as follows. We contract $A$ into a single vertex $a$, preserving multiplicity, and delete all the loops incident with $a$. We change all the ordinary edges incident with $a$ into wiggly edges with index $i+1$ at their endpoint in $B$. If there is a hyperedge with one vertex in $A$ and two vertices in $B$, we replace it by two wiggly edges from $a$ to the vertices in $B$. If there is a hyperedge with two vertices in $A$ and one vertex in $B$, we replace it with a single wiggly edge from $a$ to the vertex in $B$. For every such edge, we let the index at the endpoint in $B$ be the same as the index of the original hyperedge at that vertex. (We do not define the lengths and indices at $a$; they become irrelevant in the next step of the construction.) Since we contracted $A$ and $G_i$ was $3$-edge-connected, the resulting complex hypergraph is $3$-edge-connected. 

Then, we repeatedly replace pairs of edges incident with $a$ by wiggly edges with both endpoints in $B$, so that the connectivity between the vertices in $B$ does not change, until $a$ is an isolated vertex or has degree three. (This process corresponds to splitting off pairs of edges in the underlying graph.) This is possible by Corollary~\ref{cor:mader}. We let the index of each newly created edge at both endpoints be equal to the index of the edge which was replaced at the corresponding vertex.

If $a$ has degree three at the end of the process, we replace the three edges incident with $a$ by a hyperedge consisting of the three neighbors of $a$, letting the index of the hyperedge at each of its vertices be equal to the index of the edge which is being replaced. See Figure~\ref{fig:contr} for an example. Let $W_{i+1}$ be the set of wiggly edges and possibly the hyperedge arising from this operation. Again, we will specify lengths of the wiggly edges and the hyperedge in $W_{i+1}$ later.

Recall that the replacements do not change the connectivity between the vertices of $B$. Therefore, deleting the isolated vertex $a$ yields a $3$-edge-connected complex hypergraph on the vertex set $B$, which we denote $G_{i+1}$. Note that $\deg_{G_{i+1}}v=\deg_{G_i} v$ for every $v\in G_{i+1}$.  Moreover, by construction of $G_{i+1}$, every index in $G_{i+1}$ is in $[i+1]$ and $\iota(G_{i+1})\leq 3K$, since the index $i+1$ appears at most $3K$ times and the number of occurrences of any other index in $G_{i+1}$ is at most the number of its occurrences in $G_i$.  

\begin{figure}
\includegraphics[scale=1]{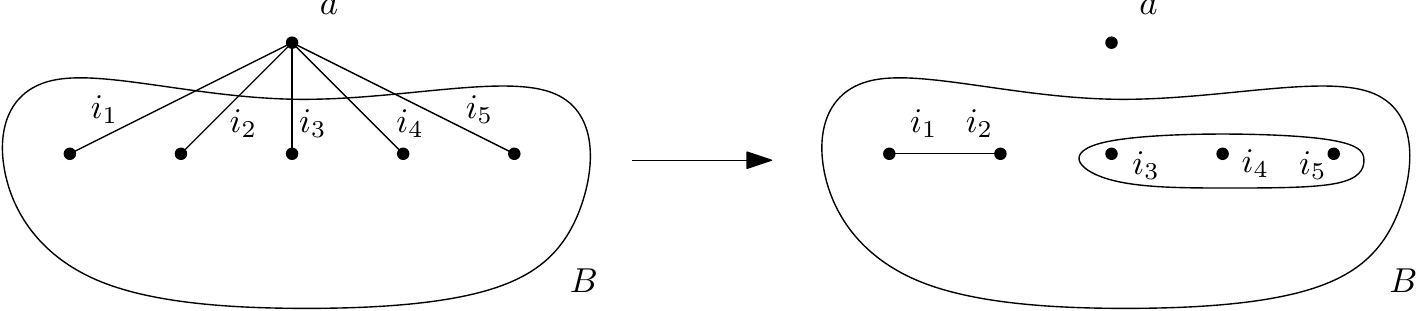}
\caption{Replacing edges between $a$ and $B$. Indices at vertices in $B$ "transfer" to the new edge or hyperedge.}
\label{fig:contr}
\end{figure}

Some of the wiggly edges and hyperedges of $W_i$ will be shrinked later in the proof. To keep the notation simple we update $W_i$ by replacing the shrinked wiggly edges and hyperedges by the wiggly edges and stubs arising from the shrinking.
 
Note that $V(G)=\dot{\bigcup}_{i\in [n]}V(H_i)$. Observe, that the following holds for every $i=1,\ldots, n$:

\begin{itemize}
\item $H_i$ is $K$-edge-connected or has only one vertex,
\item $H_i$ has minimum degree at least $d$ and stub degree at most $3K$
\item $G_i$ has minimum degree at least $d$ (minimum of the empty set is $\infty$) and has no stubs, and
\item $G_i$ is $3$-edge-connected, has only one vertex, or is empty.
\end{itemize}
 
\begin{figure}
\includegraphics[scale=1]{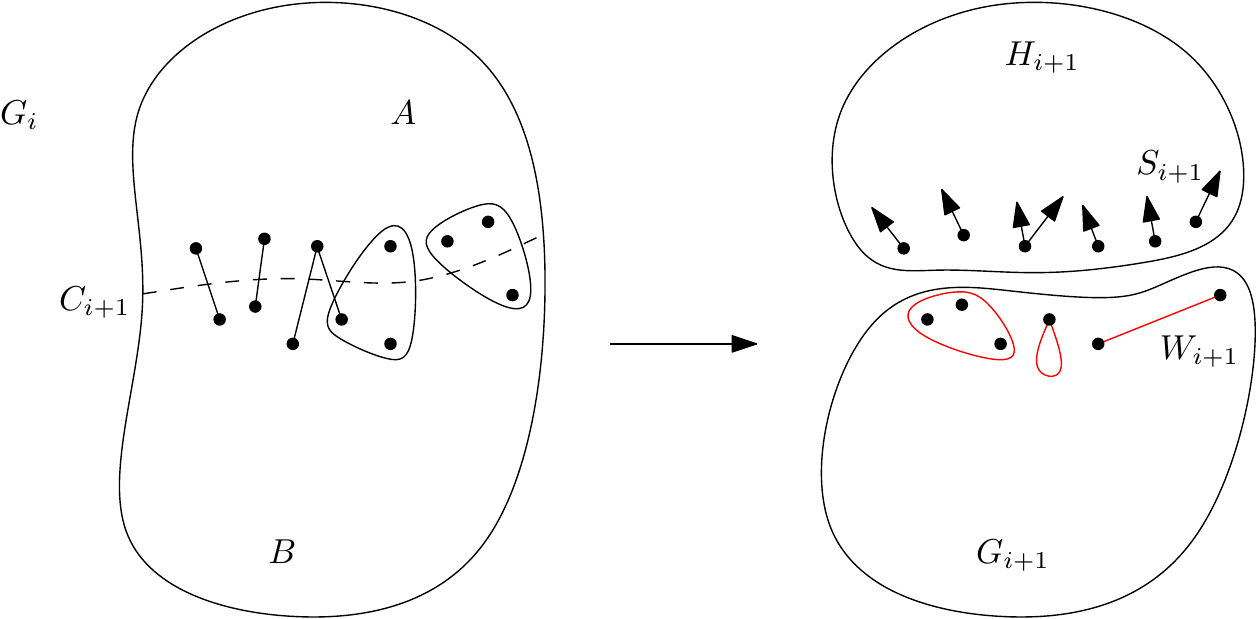}
\caption{The reducing procedure. (Stubs are depicted as arrows.)}
\label{fig:reduc}
\end{figure}

An example of the reducing procedure and the notation is depicted in the Figure~\ref{fig:reduc}.



Recall that $(V(H_i))_{i\in [n]}$ forms a partition of $V(G)$. Let $H=\dot{\bigcup}_{i\in [n]} H_i$ (thus, $V(H)=V(G)$). Let $V^{*}\subseteq V(H)$ be the set of vertices with $\hdeg_H v < \deg_H v/2$. Then, $\edeg_H v \geq \deg_H v/2 -\sdeg_H v\geq  d/2-3K$ for every $v\in V^*$.

We now assign lengths to the edges and hyperedges in the sets $W_i$. Note that the sets $W_i$ are disjoint, each of them contains at most one hyperedge and $|W_i|\leq 3K$. Also note that not all edges and hyperedges of the sets $W_i$ appear in $H$ (but we assign a length to all).

Let $\II\subseteq [n-1]$ be such that $W_i$ does not contain any hyperedge if and only if $i\in \II$. 
We select one wiggly edge $b_i$ to be {\em bad} in each set $W_i$, $i\in \II$, in such a way that every vertex $v\in V^*$ occurs in bad edges at most $4/5 \edeg_H v$ times.

\begin{claim}
There exists a set of bad edges $\BB=\{b_i\}_{i\in \II}$ such that $b_i\in W_i$ for every $i\in \II$ and every vertex $v\in V^{*}$ occurs  in bad edges at most $4/5\edeg_H v$ times. (Note that a vertex can appear up to two times in a wiggly edge.)

\end{claim}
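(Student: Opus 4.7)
The plan is to select the bad edges randomly and apply the asymmetric Lov\'asz Local Lemma (Lemma~\ref{LLL}). For each $i\in\II$, independently choose $b_i$ uniformly at random from $W_i$, and for each $v\in V^*$ let $X_v$ denote the number of occurrences of $v$ (with multiplicity) among $\{b_i\}_{i\in\II}$; let $E_v$ be the event $\{X_v>\tfrac{4}{5}\edeg_H v\}$.

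First I would note that the definition of $V^*$, combined with $\sdeg_H v\le 3K$ and $\deg_H v\ge d$, implies $\edeg_H v\ge \tfrac{d}{2}-3K$, which is very large compared to any fixed constant. Thus the threshold $\tfrac{4}{5}\edeg_H v$ is of order $d$, and we only need to control $X_v$ on that scale.

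The main step is to bound $E[X_v]=\sum_{i\in\II}f_i(v)/|W_i|$, where $f_i(v)$ is the multiplicity of $v$ in $W_i$, by some $c'\edeg_H v$ with $c'<\tfrac{4}{5}$. I would exploit the splitting-off structure of the reduction: a wiggly edge of $W_i$ incident with $v$ pairs one cut-edge at $v$ (at step $i-1$) with another edge at the contracted vertex, so $|W_i|$ is comparable to half the value of that cut; hence $f_i(v)/|W_i|$ is proportional to the fraction of cut-incidences belonging to $v$ at that step. Carefully summing these fractions and tracking the edges incident with $v$ through the reduction should yield the desired expectation bound. Given this, a Chernoff estimate (valid because the $b_i$'s are independent) gives $\Pr[E_v]\le e^{-\Omega(d)}$.

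Finally I would apply the asymmetric LLL. The event $E_v$ depends only on those $b_i$ with $v\in V(W_i)$, and each such $b_i$ affects $E_{v'}$ only for the at most $2|W_i|\le 6K$ other endpoints of edges of $W_i$, so the dependency degree is polynomial in $d$ while each $\Pr[E_v]$ is exponentially small in $d$, and choosing $x(E_v)$ accordingly lets all $E_v$ fail simultaneously with positive probability. The principal obstacle is the expectation bound: a single edge at $v$ can cross successive cuts in the reduction, contributing to multiple sets $W_i$, so a naive bound on $\sum_i f_i(v)$ can far exceed $\edeg_H v$. The saving is that additional incidences of $v$ in $W_i$ force proportionally larger $|W_i|$, so the weighted sum $\sum_i f_i(v)/|W_i|$ remains $O(\edeg_H v)$; making this precise via a double-counting across all reduction steps is the key technical hurdle.
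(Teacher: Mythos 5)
Your plan---independently pick $b_i$ uniformly in $W_i$, bound a tail, and close with the asymmetric LLL---is broadly the same as the paper's, but the expectation bound you call the ``key technical hurdle'' is not the route the paper takes, and in the form you pose it, it is false. You define $X_v$ as the number of occurrences of $v$ in \emph{all} the bad edges $b_i$, whereas what the length assignment must control is only the number of occurrences of $v$ in bad edges that \emph{survive into $H$}: an edge of $W_i$ that is cut at a later step of the reduction never appears in any $H_j$ and contributes nothing to $\udeg_H v$. Your worry about one incidence at $v$ crossing many successive cuts is genuine for your overcount: the same slot at $v$ can be split at step after step, and since $|W_i|$ is only guaranteed to lie between $2$ and $3K$, the terms $f_i(v)/|W_i|$ are bounded below by a constant at every step where $v$ touches the cut, so $\sum_i f_i(v)/|W_i|$ can grow like the number $n$ of reduction steps, which is unrelated to $\edeg_H v$. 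There is no compensating growth of $|W_i|$. Once you restrict, as the paper does, to the set $N_v$ of edges incident with $v$ in $H$, the difficulty disappears: each wiggly edge of $N_v$ lies in exactly one $W_i$, so with $m_i$ the multiplicity of $v$ in $W_i\cap N_v$ we have $\sum_i m_i\leq\edeg_H v$, and $|W_i|\geq 2$ gives $\sum_i m_i/|W_i|\leq\tfrac{1}{2}\edeg_H v$ for free.

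Even with that fix, the paper argues differently. It first converts the statement into one about \emph{fractions}: since $v$ occurs at most twice and at least once in each edge of $N_v$, it suffices to make at most $2/3$ of $N_v$ bad. It then isolates the set $\pb\subseteq N_v$ of edges that are the unique representative of their $W_i$ in $N_v$; edges of $N_v\setminus\pb$ are either ordinary (hence never bad) or come in groups of size at least two from a common $W_i$, of which at most one is bad, so at most half of $N_v\setminus\pb$ is bad \emph{deterministically}. The bad indicators of edges in $\pb$ are independent Bernoulli with success probability at most $1/2$, and Chernoff on $\pb$ alone gives the tail. This avoids the expectation calculation entirely. One last correction: the dependency degree of $E_v$ is not ``polynomial in $d$''---$E_v$ can depend on up to $3K\edeg_H v$ other events $E_{v'}$, a quantity that scales with $\edeg_H v$ rather than with $d$---so the paper uses the asymmetric LLL with $x(E_v)=1/\edeg_H v$ precisely to balance the Chernoff tail against this growing neighbourhood.
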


\begin{inproof}

Let $N_v$ be the set of edges incident with a vertex $v\in V^{*}$ in $H$. We first observe that if at most $2/3$ of edges of $N_v$ are bad, $v$ occurs in bad edges at most $4/5\edeg_H v$ times. 

The vertex $v$ occurs at most twice and at least once in each edge of $N_v$. Thus, if at most $2/3$ of the edges are bad, the number $b_v$ of occurrences of $v$ in bad edges is at most $4|N_v|/3$. On the other hand, the number $g_v$ of occurrences of $v$ in the edges that are not bad is at least $|N_v|/3$. Thus, $b_v/g_v\leq 4$ and since $b_v+g_v=\edeg_{H} v$, $b_v\leq 4/5 \edeg_{H} v$. 

We show that there exists a set of bad edges $\BB$ such that at most $2/3$ of edges of $N_v$ are bad for every $v\in V^{*}$ using Lov\'asz Local Lemma. Assume that in each $W_i$, $i\in \II$, we independently select the bad edge uniformly at random. Since every $G_{i-1}$ is $3$-edge-connected, every $W_i$, $i\in \II$, contains at least two wiggly edges. Therefore, the probability of an edge of $W_i$, $i\in \II$, being bad is at most one half and these events are independent for edges from different $W_i$-s.

For $v\in V^{*}$, let $E_{v}$ be the event that more than $2/3$ of edges of $N_v$ are bad.  By our observation above, $\Pr(\bigwedge_{v\in V^{*}} \overline{E_v})>0$, implies the claim.

The event $E_{v}$ is independent of all except at most $3K\edeg_H v$ other events $E_{v'}$. (In particular, $E_v$ and $E_{v'}$ are independent unless both $v$ and $v'$ are incident with an edge of $W_i$, $i\in \II$
.)

Let $\pb$ be the set of edges $e_i$ such that $e_i$ is the only edge in $W_i\cap N_v$, $i\in \II$. Observe that at most half of the edges in $N_v\setminus \pb$ are bad (each such edge is either not in $W_i$ for any $i\in \II$ and therefore it cannot be bad, or there are at least two edges from the same $W_i$, out of which at most one is bad). Thus, if $|\pb|\leq |N_v|/3$, the probability of $E_v$ is $0$.

Assume it is not the case. Then, we show that at most $2/3$ of the edges of $\pb$ are bad (and thus, at most $2/3$ of the edges of $N_v$ are bad) with sufficiently small probability. The claim follows.

Since $|N_v|\geq \edeg_H v/2$, $|\pb|\geq |N_v|/3\geq \edeg_H v/6$. Note that the events "$e$ is bad" for $e\in\pb$ are independent.
Then, by Chernoff inequality, the probability that more than $2/3$ of edges in $\pb$ are bad is less than $c^{|\pb|/2}\leq c^{\edeg_H v/12}$, where $c< 0.98$.
Thus, at most $2/3$ of edges in $\pb$ are bad with probability less than $c^{\edeg_H v/12}$. 



It is possible to choose $D_1$ such that \[c^{\edeg_H v/12}\leq \frac{1}{\edeg_H v}(1-\frac{1}{d/2-3K})^{3K \edeg_H v}\] for every $v\in V^*$ if $d\geq D_1$ and $\edeg_H v> d/2-3K$. 
Then, for $x(E_v)=1/\edeg_H v$ for every $v\in V^*$, the asymmetric version of Lov\'asz Local Lemma (Lemma~\ref{LLL}) yields that $\Pr(\bigwedge_{v\in V^{*}} \overline{E_v})>0$, because \[\Pr(E_v)<\frac{1}{\edeg_H v}(1-\frac{1}{d/2-3K})^{3K \edeg_H v}\] for every $E_v$, $v\in V^{*}$.

\end{inproof}

Let $b_i$ be either the (only) bad wiggly edge or the hyperedge in $W_i$ for every $i\in [n-1]$. (Note that $W_n=\emptyset$ by the construction.) 

For each $W_i$, $i\in [n-1]$, we assign length one to all the wiggly edges in $W_i$ except $b_i$. It follows that every vertex $v$ has either $\hdeg_H v\geq\deg_H v/2$ or $\udeg_H v\geq\edeg_H v/5$.

We now assign lengths to $b_i$-s so that each $G_i$ is $\ell$-divisible. 

Note that for every $i\in[n-1]$, $W_i$ belongs to $G_i$ (in particular, $b_i$ is in $G_i$) and $G_i$ does not contain wiggly edges and hyperedges of $W_j$ for every $j>i$. Therefore, $b_1$ is the only wiggly edge or hyperedge in $G_1$ without assigned length. Let $\alpha(b_1)=-\alpha(G_1\setminus b_1) \mod \ell$. Then, $G_1$ is $\ell$-divisible. Inductively, we assign lengths to $b_i$, $i>1$, in an analogous way. 


We now construct an $\ell$-path decomposition of $G$. Since $G=G_0$ and it does not contain any wiggly edges and hyperedges, there is no shrinking of $G_0$ different from $G_0$ (but we consider $G_0$ to be its own (total)  shrinking). Thus, we can prove that $G$ has an $\ell$-path decomposition by showing that there is a total shrinking of $G_i$ that has an $\ell$-path decomposition for every $i\in \{0\}\cup [n]$. 
Assuming that a total shrinking $G'_i$ of $G_i$ has an $\ell$-path decomposition, we show that there is a total shrinking of $G_{i-1}$ that has an $\ell$-path decomposition. Since $G_n$ is the empty graph it has a trivial $\ell$-path decomposition (and it is its own total shrinking). 

By Observation~\ref{obs:abs-shrink}, if $G'_i$ has an $\ell$-path decomposition, there is an absolute shrinking $G^s_i$ of $G'_i$ (and also of $G_i$), that has an $\ell$-decomposition.

Now, the set $W_i$ in $G_i^s$ consists only of stubs. Each of these stubs corresponds to an edge or a hyperedge in  $C_i$. Recall that $H_i$ contains the set of stubs $S_i$ corresponding to incidences of edges and hyperedges of $C_i$ with vertices of $H_i$ in $G_{i-1}$. We have not specified  the lengths of these stubs yet. We assign the lengths in such a way that the sum of the lengths of the stubs in $S_i$ and $W_i$ corresponding to an edge or a hyperedge $e$ in $C_i$ is congruent to the length of $e$ modulo $\ell$ for every $e\in C_i$ (recall that we defined length of ordinary edges to be one). In case $e$ is a hyperedge with two vertices in $V(H_i)$, we assign  any lengths with the required sum to the corresponding stubs in $S_i$. Otherwise the length of a stub in $S_i$ is determined by the length of the corresponding stub in $W_i$ up to multiples of $\ell$.

Assuming that we have already constructed an $\ell$-path decomposition $\PP$ of $G_i^s$, we specify the indices of stubs in $S_i$ that correspond to ordinary edges in $C_i$. Let $s^e_G$ and $s^e_H$ be the stubs in $W_i$ and $S_i$, respectively, corresponding to an ordinary edge $e$ in $C_i$ and let $P_e\in \PP$ be the path containing $s^e_G$. Let $i$ be the index of $P_e\setminus s^e_G$ at the end where $s^e_G$ was attached. We let the index of $s^e_H$ be equal to $i$. 

Observe that $\iota(H_i)\leq 3K$ for every $i$ and all indices in $H_i$, $G_i$ are at most $i$. 

If all the edges in $C_i$ are wiggly edges or hyperedges, $G_i^s\cup H_i$ is a shrinking of $G_{i-1}$. In that case, it suffices to find an $\ell$-path decomposition of a total shrinking of $H_i$ to obtain an $\ell$-path decomposition of a total shrinking of $G_{i-1}$. 

In case $C_i$ contains some ordinary edges, the situation is more complex because ordinary edges cannot be shrinked. However, if there is an $\ell$-path decomposition $\QQ$ of a total shrinking $H^s_i$ of $H_i$ such that each path in $\QQ$ contains at most one stub in $S_i$, we are done. Let $e\in C_i$ be an ordinary edge and let $Q_e\in \QQ$ be the $\ell$-path containing $s^e_H$. Then, $(P_e\setminus s^e_G)\ct e\ct (Q_e\setminus s^e_H)$ is an $\ell$-path. Thus, the graph obtained from $H_i^s\cup G_i^s$ by replacing stubs in $W_i$ and $S_i$ corresponding to ordinary edges in $C_i$ by these edges has an $\ell$-path decomposition.

We construct a desired $\ell$-path decomposition of a total shrinking of $H_i$ as follows.
First, note that $H_i$ is $\ell$-divisible; by the choice of lengths of stubs in $S_i$, we have $\alpha(H_i)+\alpha(G_i^s)=\alpha(G_{i-1})\mod \ell$. Since both $G_{i-1}$ and $G_i^s$ are $\ell$-divisible, $H_i$ is $\ell$-divisible as well.

By applying Lemma~\ref{lem:shrink} to $H_i\setminus S_i$, we obtain a total shrinking $H'_i$ of $H_i\setminus S_i$ that is $K/3$-edge-connected, $\edeg_{H'_{i}} v\geq\edeg_{H_{i}\setminus S_i} v/100$ and $\udeg_{H'_{i}} v\geq\edeg_{H'_{i}} v/1000$ for every $v$, and all except at most $S$ stubs at each vertex in $H_i'$ form balanced stub-pairs where $S$ is as in Lemma~\ref{lem:shrink}.

Let $H''_{i}$ be the complex graph obtained from $H'_{i}$ by removing all balanced stub-pairs. From Observation~\ref{obs:balanced-stub}, it follows that if $H''_{i}\cup S_i$ has an $\ell$-path decomposition $\PP$, $H_i'$ has an $\ell$-path decomposition $\PP\cup \SSS$ where $\SSS$ is a set of $\ell$-paths formed by the balanced stub-pairs.
Note that $H''_{i}\cup S_i$ has at most $S+3K$ stubs at each vertex.

Moreover, by construction, $H''_{i}\cup S_i$ satisfies the assumptions of Lemma~\ref{lem:core} if $d$ is sufficiently large. To ensure this, we choose $D_2$ to be as $d$ in Lemma~\ref{lem:core} for $\ell$, $\beta=1/1000$, $k=K/3$, $r=3K$ and $s=3K+S$. Then, $H''_{i}\cup S_i$ has an $\ell$-path decomposition. As discussed above, it follows that $G$ has an $\ell$-path decomposition.
\end{proof}


\section*{Acknowledgements}
We would like to thank to Julien Bensmail, Ararat Harutyunyan and Tien-Nam Le 
for helpful discussions.

The work was done while the first author was a postdoc at Laboratoire d’Informatique du Parall\'elisme,
\'Ecole Normale Sup\'erieure de Lyon,
69364 Lyon Cedex 07, France.
\bibliography{biblio} 

\begin{thebibliography}{10}

\bibitem{bib:barat-thomassen}
J.~Bar{\'a}t and C.~Thomassen.
\newblock Claw-decompositions and {T}utte-orientations.
\newblock {\em Journal of Graph Theory}, 52(2):135--146, 2006.

\bibitem{bib:BHLMT}
J.~Bensmail, A.~Harutyunyan, T.-N. Le, M.~Merker, and S.~Thomass{\'e}.
\newblock A proof of the {B}ar\'at-{T}homassen conjecture.
\newblock {\em J. Combin. Theory Ser. B}, 124:39--55, 2017.

\bibitem{bib:orig-paths}
J.~Bensmail, A.~Harutyunyan, T.-N. Le, and S.~Thomass{\'e}.
\newblock Edge-partitioning a graph into paths: Beyond the
  {B}ar{\'a}t-{T}homassen conjecture.
\newblock {\em Combinatorica}, 39(2):239--263, Apr 2019.

\bibitem{bib:lll}
P.~Erd\H{o}s and L.~Lov{\'a}sz.
\newblock Problems and results on 3-chromatic hypergraphs and some related
  questions.
\newblock In R.~R. A.~Hajnal and V.~S\'os, editors, {\em Infinite and finite
  sets}, pages 609--627. North-Holland, II, 1975.

\bibitem{bib:nwh}
A.~Frank, T.~Király, and M.~Kriesell.
\newblock On decomposing a hypergraph into $k$ connected sub-hypergraphs.
\newblock {\em Discrete Applied Mathematics}, 131(2):373 -- 383, 2003.
\newblock Submodularity.

\bibitem{bib:jackson}
B.~Jackson.
\newblock On circuit covers, circuit decompositions and euler tours of graphs.
\newblock {\em Surveys in Combinatorics, 1993 (Keele)}, pages 191--210, 1993.

\bibitem{bib:our-trees}
T.~Klimo\v{s}ov\'{a} and S.~Thomass{\'e}.
\newblock Edge-decomposing graphs into coprime forests.
\newblock {\em arXiv preprint arXiv:1803.03704}, 2018.

\bibitem{bib:lovasz}
L.~Lov{\'a}sz.
\newblock A generalization of {K}{\"o}nig's theorem.
\newblock {\em Acta Mathematica Hungarica}, 21(3-4):443--446, 1970.

\bibitem{bib:LOVASZ2013587}
L.~M. Lovász, C.~Thomassen, Y.~Wu, and C.-Q. Zhang.
\newblock Nowhere-zero 3-flows and modulo k-orientations.
\newblock {\em Journal of Combinatorial Theory, Series B}, 103(5):587 -- 598,
  2013.

\bibitem{bib:mader}
W.~Mader.
\newblock A reduction method for edge-connectivity in graphs.
\newblock In B.~Bollobás, editor, {\em Advances in Graph Theory}, volume~3 of
  {\em Annals of Discrete Mathematics}, pages 145 -- 164. Elsevier, 1978.

\bibitem{bib:entropy}
R.~A. Moser and G.~Tardos.
\newblock A constructive proof of the general {L}ov{\'a}sz local lemma.
\newblock {\em Journal of the ACM (JACM)}, 57(2):11, 2010.

\bibitem{bib:thomassen-3-flow}
C.~Thomassen.
\newblock The weak 3-flow conjecture and the weak circular flow conjecture.
\newblock {\em Journal of Combinatorial Theory, Series B}, 102(2):521 -- 529,
  2012.

\bibitem{bib:thomassen-paths}
C.~Thomassen.
\newblock Decomposing graphs into paths of fixed length.
\newblock {\em Combinatorica}, 33(1):97--123, 2013.

\end{thebibliography}
\bibliographystyle{abbrv}

\end{document}